\documentclass[11pt]{amsart}
\usepackage{amscd}
\usepackage{amsmath}
\usepackage{amsxtra}
\usepackage{amsfonts}
\usepackage{amssymb}

\oddsidemargin  0.0in
	\evensidemargin 0.0in
	\textwidth      6.5in
	\headheight     0.0in
	\topmargin      0.0in
 
\newtheorem{theorem}{Theorem}[section]
\newtheorem{corollary}[theorem]{Corollary}
\newtheorem{lemma}[theorem]{Lemma}
\newtheorem{proposition}[theorem]{Proposition}

\theoremstyle{definition}
\newtheorem{definition}[theorem]{Definition}
\newtheorem{remark}[theorem]{Remark}

\newtheorem{example}[theorem]{Example}
\theoremstyle{remark}

\renewcommand{\theclaim}{\textup{\theclaim}}

\newtheorem*{acknowledgements}{Acknowledgements}

\numberwithin{equation}{section}

\def\Unitary{\operatorname*{Unitary}}

\def\openone

{\mathchoice

{\hbox{\upshape \small1\kern-3.3pt\normalsize1}}

{\hbox{\upshape \small1\kern-3.3pt\normalsize1}}

{\hbox{\upshape \tiny1\kern-2.3pt\SMALL1}}

{\hbox{\upshape \Tiny1\kern-2pt\tiny1}}}

\makeatletter

\newbox\ipbox

\newcommand{\ip}[2]{\left\langle #1\, , \,#2\right\rangle}
\newcommand{\diracb}[1]{\left\langle #1\mathrel{\mathchoice

{\setbox\ipbox=\hbox{$\displaystyle \left\langle\mathstrut
#1\right.$}

\vrule height\ht\ipbox width0.25pt depth\dp\ipbox}

{\setbox\ipbox=\hbox{$\textstyle \left\langle\mathstrut
#1\right.$}

\vrule height\ht\ipbox width0.25pt depth\dp\ipbox}

{\setbox\ipbox=\hbox{$\scriptstyle \left\langle\mathstrut
#1\right.$}

\vrule height\ht\ipbox width0.25pt depth\dp\ipbox}

{\setbox\ipbox=\hbox{$\scriptscriptstyle \left\langle\mathstrut
#1\right.$}

\vrule height\ht\ipbox width0.25pt depth\dp\ipbox}

}\right. }

\newcommand{\dirack}[1]{\left. \mathrel{\mathchoice

{\setbox\ipbox=\hbox{$\displaystyle \left.\mathstrut
#1\right\rangle$}

\vrule height\ht\ipbox width0.25pt depth\dp\ipbox}

{\setbox\ipbox=\hbox{$\textstyle \left.\mathstrut
#1\right\rangle$}

\vrule height\ht\ipbox width0.25pt depth\dp\ipbox}

{\setbox\ipbox=\hbox{$\scriptstyle \left.\mathstrut
#1\right\rangle$}

\vrule height\ht\ipbox width0.25pt depth\dp\ipbox}

{\setbox\ipbox=\hbox{$\scriptscriptstyle \left.\mathstrut
#1\right\rangle$}

\vrule height\ht\ipbox width0.25pt depth\dp\ipbox}

} #1\right\rangle}

\newcommand{\cj}[1]{\overline{#1}}

\newcommand{\bz}{\mathbb{Z}}

\newcommand{\B}{\mathcal{B}}
\newcommand{\br}{\mathbb{R}}
\def\W{\mathcal{W}}
\newcommand{\bc}{\mathbb{C}}
\newcommand{\bt}{\mathbb{T}}
\newcommand{\bn}{\mathbb{N}}

\def\blfootnote{\xdef\@thefnmark{}\@footnotetext}


\renewcommand{\mod}{\operatorname{mod}}

\newcommand{\bu}{\mathbf u}
\hyphenation{wave-lets}\hyphenation{ in-fi-nite}\hyphenation{ con-vo-lu-tion}

\input xy
\xyoption{all}
\usepackage{amssymb}


\newcommand{\supp}[1]{\text{supp} (#1)}

\newcommand{\Span}{\overline{\operatorname*{span}}}

\def\R{\mathcal{R}}

\def\C{\mathcal{C}}

\def\H{\mathcal{H}}

\def\-{^{-1}}
\def\B{\mathcal{B}}

\def\O{\mathcal{O}}
\def\K{\mathcal{K}}

\def\ty{\emptyset}

\def\W{\mathcal{W}}
\def\Orbit{\operatorname*{Orbit}}


\begin{document}

\title[Atomic representations of Cuntz algebras]{Atomic representations of Cuntz algebras}
\author{Dorin Ervin Dutkay}

\address{[Dorin Ervin Dutkay] University of Central Florida\\
	Department of Mathematics\\
	4000 Central Florida Blvd.\\
	P.O. Box 161364\\
	Orlando, FL 32816-1364\\
U.S.A.\\} \email{Dorin.Dutkay@ucf.edu}

\author{John Haussermann}

\address{[John Haussermann] University of Central Florida\\
	Department of Mathematics\\
	4000 Central Florida Blvd.\\
	P.O. Box 161364\\
	Orlando, FL 32816-1364\\
U.S.A.\\} \email{jhaussermann@knights.ucf.edu}

\author{Palle E.T. Jorgensen}
\address{[Palle E.T. Jorgensen]University of Iowa\\
Department of Mathematics\\
14 MacLean Hall\\
Iowa City, IA 52242-1419\\}\email{palle-jorgensen@uiowa.edu}

\thanks{} 
\subjclass[2010]{46L45 , 22D25, 42C40 , 47B15, 42C10 , 05A05, 37A55, 47N30.}
\keywords{Hilbert space, dynamical system, representation, $C^*$-algebra, endomorphism, probability, iterated function system, wavelet, spectral measures, Cuntz algebras, Walsh bases, permutative representations}

\begin{abstract}
       To a representation of $\O_N$ (the Cuntz algebra with $N$ generators) we associate a projection valued measure and we study the case when this measure has atoms. The main technical tool are the spaces invariant for all the operators $S_i^*$. We classify the purely atomic representations and find when such representations are permutative. Applications include: wavelet representations, representations generated by finitely correlated states, representation associated to Hadamard triples (or fractal spectral measures) and representations associated to generalized Walsh bases. 
\end{abstract}
\maketitle \tableofcontents

\section{Introduction}

 The Cuntz algebras are indexed by an integer $N > 1$, where $N$ is the number of generators. As a $C^*$-algebra (denoted $\O_N$), it is defined by generators and relations, and it is known to be a simple, purely infinite $C^*$-algebra, \cite{Cu77}. Further its $K$-groups are known. But its irreducible representations are highly subtle. To appreciate the importance of the study of representations of $\O_N$, recall that to specify a representation of $\O_N$ amounts to identifying a system of isometries in a Hilbert space $\H$, with orthogonal ranges adding up to $\H$. But such orthogonal splitting in Hilbert space may be continued iteratively, and as a result, one gets links between $\O_N$-representation theory to such neighboring areas as symbolic dynamics; and to filters used in signal processing, corresponding to a system of $N$ uncorrelated frequency bands.

Returning to the subtleties of the representations of $\O_N$, and their equivalence classes, it is known, for fixed $N$, that the set of equivalence classes of irreducible representations  of $\O_N$,  does not admit a Borel cross section; i.e., the equivalence classes, under unitary equivalence, does not admit a parameterization in the measurable Borel category. (Intuitively, they defy classification.) Nonetheless, special families of inequivalent representations have been found, and they have a multitude of applications, both to mathematical physics \cite{BrJo02} , to the study of wavelets \cite{DuJo08a, DuJo07a, Jor06, Jor01,MR1285568,MR1158756,MR2966145,MR2277210,MR1352420,MR2563494,MR2362879,MR2318495,MR2121531,MR2560855,MR2531317,MR2412296,MR2251300,MR2129642,MR1980913} , to harmonic analysis \cite{Str89, DuJo9a, DuJo07b}, to the study of fractals as iterated function systems \cite{DuJo06a, DuJo11a}; and to the study of $\operatorname*{End}(B(\H))$ (= endomorphisms)  where $\H$ is a fixed Hilbert space.  Hence it is of interest to identify both discrete and continuous series of representations of $\O_N$; as they arise in such applications.

      We begin with a systematic study of $\operatorname*{Rep}(\O_N, \H)$ where $\H$ is a fixed Hilbert space. In section 2, we compute, starting with a fixed representation of $\O_N$ (the Cuntz algebra with $N$ generators), an associated spectral resolution of a maximal abelian algebra computed from the symbolic presentation of $\O_N$. This takes the form of a projection valued measure $P$ on the Borel subsets of the Cantor group $\K_N$ on a finite alphabet, see e.g., \cite{Rud90, Kat04}.  The relevance of these projection valued measures includes decompositions of $L^2(\br)$ with respect to wavelet packets, as well as to general and canonical decomposition of representations of $\O_N$, see e.g., \cite{DuJo12}.

      The paper is structured as follows: in section 2, we introduce the projection valued measure $P$ associated to a representation of the Cuntz algebra and derive some of its general properties. The atoms are the infinite words $\omega$ for which $P(\{\omega\})\neq 0$. In section 3 we study subspaces which are invariant for the adjoints of the Cuntz isometries $(S_i^*)_{i\in\bz_N}$. The main result of this section is Theorem \ref{th2.6} which shows that for a finite dimensional, cyclic invariant subspace $M$, there can be no non-periodic atoms and all cyclic atoms must be contained in $M$. In section 4 we study purely atomic representations, that is representations for which the projection valued measure $P$ is supported on a set of atoms. A particular case of purely atomic representations are the permutative representations introduced in \cite{BrJo99}. This is proved in Theorem \ref{th1.2}. We also give necessary and sufficient conditions for a purely atomic representation to be permutative (Theorems \ref{th1.4}, \ref{th1.5}). In Theorem \ref{th1.3} we show that under some mild assumptions, any $(S_i^*)$-invariant subspace $M$ as above, must contain one of the vectors in the permutative basis. In Theorem \ref{th3.11} and Corollary \ref{cor4.18} we show that any finite dimensional $(S_i^*)$-invariant subspace in a purely atomic representation must contain some elements in a cyclic atom and nothing from the non-periodic ones. In section 5 we study various classes of examples. In Theorem \ref{th4.3} we show that wavelet representations cannot have atoms except in some very special circumstances. In section 5.2 we turn to representation generated by finitely correlated states, that is representations obtained by a dilation of a finite dimensional space. We give conditions under which these representations are purely atomic or permutative. Section 5.3 refers to representations associated to Hadamard triples or fractal spectral measures. These representations are always permutative and in Theorem \ref{opr8.3}, we use our results to give a detailed description of the structure of such representations. A particular case (Example \ref{ex8.4}) involves the classical Fourier series on the interval.  In section 5.4 we turn to representations of $\O_N$ associated to generalized Walsh bases; these representations are also permutative and they are all equivalent (Theorem \ref{th9.1}).
      
      {\bf A notation:} We will sometimes identify a closed subspace with the orthogonal projection onto it; so, for example we will use the same letter $M$ for an invariant subspace and for the projection onto it, or we use $P(\omega)$ for the projection onto an atom and also for the subspace which is the range of the projection. Other times we will use the notation $P_M$ to indicate the projection onto the subspace $M$, and we use $P(\omega)\H$ for the range of the projection $P(\omega)$, where $\H$ is the ambient Hilbert space.

\section{Representations of $\O_N$}

Even though, the classification problem for representations of $\O_N$, defies parametrization of the equivalence classes if we insist on covering all representations \cite{BJKW00}, it was found that for practical problems, one may limit the focus to special representations. These various subclasses (dictated by a host of applications) have nonetheless found to admit very computable invariants, up to unitary equivalence. For the case of wavelets and signal processing (quadrature mirror filters), see \cite{BrJo02, BEJ00, Jor01, Jor06, MR2277210}, fractals \cite{Hut81, DJ06, DuJo08a, MR2966145, MR2563494}, harmonic analysis \cite{DuJo9a, DHS12}  , affine geometry \cite{DuJo07b}, statistics \cite{DJ06, MR1158756} , super-selection sectors and deformations  in physics \cite{MR1158756, MR2412296, MR1980913, MR2251300, MR2531317, MR2560855,  Cu77, MR2362879,  MR1285568, MR2318495}, ergodic theory \cite{MR2560855, BrJo99, BJO04},  combinatorics and graph theory \cite{MR2121531, MR2318495}. This is by no means an exhaustive list.

    The purpose of the present section is to offer a geometric framework for approaches to classification. Much of this is motivated by analogous results in the area of unitary representations of groups, but the case of the Cuntz algebras $\O_N$ offers a host of subtleties not present is the traditional treatments of decomposition theory.

\begin{definition}\label{def2.0}
Let $N\geq 2$. The Cuntz algebra $\O_N$ is the $C^*$-algebra generated by some isometries $(S_i)_{i\in\bz_N}$ satisfying the {\it Cuntz relations}
\begin{equation}
S_i^*S_j=\delta_{ij}I,\quad (i,j\in\bz_N),\quad \sum_{i\in\bz_N}S_iS_i^*=I.
\label{eq2.0.1}
\end{equation}
\end{definition}
\begin{definition}\label{def0.1}
Fix an integer $N\geq 2$. Let $(S_i)_{i=0}^{N-1}$ be a representation of the Cuntz algebra $\O_N$ on a Hilbert space $\H$. Let $\bz_N:=\{0,1,\dots,N-1\}$. We will call elements in $\bz_N^k$ {\it words of length $k$}. 
We denote by $\K=\K_N=\bz_N^{\bn}$, the set of all infinite words.
Given two finite words $\alpha=\alpha_1\dots\alpha_n$, $ \beta=\beta_1\dots\beta_m$, we denote by $\alpha\beta$ the concatenation of the two words $\alpha\beta=\alpha_1\dots\alpha_n\beta_1\dots\beta_m$. Similarly for the case when $\beta$ is infinite. Given a word $\omega=\omega_1\omega_2\dots$ and $k$ an non-negative integer smaller than its length we denote by 
$$\omega|k:=\omega_1\dots\omega_k.$$

For a finite word $I=i_1\dots i_n$ we denote by 
$$S_I:=S_{i_1}\dots S_{i_n}.$$

\end{definition}

\begin{definition}\label{def1.3}
An infinite word $\omega$ in $\K$ is called {\it periodic} if $\sigma^k(\omega)=\sigma^{k'}(\omega)$ for some non-negative integers $k\neq k'$. An infinite word $\omega$ is called cyclic if there exists $p>0$ such that $\sigma^p(\omega)=\omega$. In this case $\omega$ is an infinite repetition of the word $\omega_1\dots\omega_p$ and we write $\omega=\underline{\omega_1\dots\omega_p}$. A subset $\Omega$ of $\K$ is called {\it aperiodic} if it contains no periodic points.
\end{definition}

\begin{definition}\label{defcyc}
Let $(S_i)_{i=0}^{N-1}$ be a representation of the Cuntz algebra $\O_N$ on a Hilbert space $\H$. A set $\mathcal M$ of vectors in $\H$ is called {\it cyclic} for the representation if 
$$\Span\{S_IS_J^*v : v\in \mathcal M, I, J\mbox{ finite words }\}=\H.$$

\end{definition}

\begin{definition}\label{defcantor}
Fix an integer $N\geq 2$.  The {\it Cantor group on $N$ letters} is 
$$\K=\K_N=\prod_{1}^\infty\bz_N=\{(\omega_1\omega_2\dots) : \omega_i\in \bz_N\mbox{ for all }i=1,\dots\},$$
an infinite Cartesian product.

The elements of $\K$ are infinite words.
On the Cantor group, we consider the product topology. We denote by $\B(\K)$ the sigma-algebra of Borel subsets of $\K$. 

Denote by $\sigma$ the shift on $\K$, $\sigma(\omega_1\omega_2\dots)=\omega_2\omega_3\dots$. Define the inverse branches of $\sigma$: for $i\in\bz_N$, $\sigma_i(\omega_1\omega_2\dots)=i\omega_1\omega_2\dots$.

For a finite word $I=i_1\dots i_k\in \bz_N^k$, we define the corresponding {\it cylinder set }

\begin{equation}
\C(I)=\{\omega\in \K : \omega_1=i_1,\dots,\omega_k=i_k\}.
\label{eqcantor0}
\end{equation}

The Pontryagin dual group of $\K$ is 
$$\widehat \K=\{(\xi_1\xi_2\dots\xi_p00\dots)\, : \xi_1,\dots,\xi_p\in \bz_N, p\in\bn\}.$$
The Fourier duality is given by 
\begin{equation}
\ip{\omega}{\xi}=\prod_{k=1}^\infty \ip{\omega_k}{\xi_k}=\prod_{k=1}^\infty e^{2\pi i\frac{\omega_k\xi_k}N},\quad(\omega\in\K, \xi\in\widehat\K).
\label{eqcantor1}
\end{equation}
Note that only finitely many terms in this product are not equal to 1. 

For $\xi=\xi_1\dots\xi_n0\dots\in\widehat \K$ and $I=i_1\dots i_n\in\bz_N^n$ we define 
$$\ip{I}{\xi}=\ip{I0\dots}{\xi}.$$

Define the dual of the shift $\widehat \sigma$ on $\widehat K$ by $\ip{\omega}{\widehat\sigma(\xi)}=\ip{\sigma(\omega)}{\xi}$ for all $\omega\in\K$, $\xi\in\widehat \K$, so
$$\widehat\sigma(\xi_1\dots\xi_p0\dots)=(0\xi_1\dots\xi_p0\dots).$$
\end{definition}

\begin{theorem}\label{th0.2}
Let $(S_i)_{i\in\bz_N}$ be a representation of the Cuntz algebra $\O_N$ on a Hilbert space $\H$. For every finite word $I$, define the projection
\begin{equation}
P(\C(I))=S_IS_I^*.
\label{eq0.2.1}
\end{equation}
Then $P$ extends to a projection-valued measure on $\B(\K)$.

For $\xi\in\widehat\K$, $\xi=\xi_1\dots\xi_n0\dots$, define the operator 
\begin{equation}
U(\xi)=\sum_{I\in\bz_N^n}\ip{I}{\xi}S_IS_I^*.
\label{eq0.2.2}
\end{equation}
Then $U$ defines a unitary representation of $\widehat \K$ on $\H$. Also the projection-valued measure $P$ is the spectral decomposition ( as defined by the Stone-Naimark-Ambrose-Godement theorem \cite{MR1043174}) of the unitary representation $U$, i.e.,
\begin{equation}
U(\xi)=\int_{\K}\ip{\omega}{\xi}P(d\omega),\quad(\xi\in\widehat\K).
\label{eq0.2.3}
\end{equation}
For $i\in \bz_N$ and $A\in \B(\K)$
\begin{equation}
S_iP(A)S_i^*=P(\sigma_i(A)).
\label{eq0.2.3.1}
\end{equation}

\begin{equation}
\sum_{i\in\bz_N}S_iP(\sigma_i^{-1}(A))S_i^*=P(A),\quad(A\in\B(\K)).
\label{eq0.2.3.2}
\end{equation}

Define $\alpha:B(\H)\rightarrow B(\H)$ by
\begin{equation}
\alpha(A)=\sum_{i=0}^{N-1}S_iAS_i^*,\quad (A\in \B(\H)).
\label{eq0.2.4}
\end{equation}
Then $\alpha$ is an endomorphism of $B(\H)$ and 
\begin{equation}
\alpha(P(A))=P(\sigma^{-1}(A)),\quad (A\in \B(\K)).
\label{eq0.2.5}
\end{equation}
For every bounded measurable function $f$ on $\K$ 
\begin{equation}
\alpha\left(\int_\K f(\omega)dP(\omega)\right)=\int_\K f(\sigma(\omega))dP(\omega).
\label{eq0.2.6}
\end{equation}
Also 
\begin{equation}
\alpha(U(\xi))=U(\widehat\sigma(\xi)),\quad(\xi\in\K).
\label{eq0.2.7}
\end{equation}

\end{theorem}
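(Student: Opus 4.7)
The plan is to define $P$ on the cylinder algebra and extend by a standard measure-theoretic argument, after which every remaining identity reduces to algebraic manipulations with the Cuntz relations. First I would verify the three facts turning $P(\C(I)) := S_I S_I^*$ into a projection-valued premeasure on the cylinder algebra: (a) each $P(\C(I))$ is an orthogonal projection because $S_I^* S_I = I$, iterated from $S_i^* S_j = \delta_{ij} I$; (b) for distinct $I, J$ of the same length the ranges are mutually orthogonal since $S_I^* S_J = 0$; (c) the identity $\sum_j S_j S_j^* = I$ yields $P(\C(I)) = \sum_j S_{Ij} S_{Ij}^* = \sum_j P(\C(Ij))$, matching the refinement $\C(I) = \bigsqcup_j \C(Ij)$. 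Since the cylinders generate a semialgebra on the compact metric space $\K$, each scalar map $A \mapsto \ip{P(A)x}{x}$ is a positive finitely additive premeasure of total mass $\|x\|^2$, hence extends to a regular Borel measure by Carath\'eodory; polarization plus a uniform bound then supplies a PVM on all of $\B(\K)$.

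For the unitary representation $U$ and its spectral identification with $P$: when $\xi = \xi_1 \dots \xi_n 0 \dots \in \widehat \K$, the function $\omega \mapsto \ip{\omega}{\xi}$ depends only on $\omega_1, \dots, \omega_n$, so it equals the simple function $\sum_{I \in \bz_N^n} \ip{I}{\xi} \chi_{\C(I)}$. Integrating this against $P$ gives exactly the defining expression for $U(\xi)$, which simultaneously proves (\ref{eq0.2.3}) and shows $U(\xi)$ does not depend on the chosen $n$ (from the refinement step of paragraph one). The homomorphism law $U(\xi + \eta) = U(\xi) U(\eta)$ and unitarity follow from the functional calculus for $P$, since each $\omega$ is a character of $\widehat \K$; uniqueness in the Stone-Naimark-Ambrose-Godement theorem then identifies $P$ as the spectral measure of $U$.

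The covariance identities are essentially one-line computations on cylinders, extended by linearity and monotone convergence. For (\ref{eq0.2.3.1}): $S_i P(\C(I)) S_i^* = S_{iI} S_{iI}^* = P(\C(iI)) = P(\sigma_i(\C(I)))$. For (\ref{eq0.2.3.2}), $\sigma_i^{-1}(\C(I))$ is either empty or the cylinder obtained by stripping the leading letter of $I$, and summing over $i$ rebuilds $P(\C(I))$ via $\sum_i S_i S_i^* = I$. The endomorphism property of $\alpha$ comes from $\alpha(A)\alpha(B) = \sum_{i,j} S_i A S_i^* S_j B S_j^* = \sum_i S_i AB S_i^* = \alpha(AB)$, again using $S_i^* S_j = \delta_{ij} I$. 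Then $\alpha(P(\C(I))) = \sum_j S_{jI} S_{jI}^* = P(\bigsqcup_j \C(jI)) = P(\sigma^{-1}(\C(I)))$ gives (\ref{eq0.2.5}); formula (\ref{eq0.2.6}) follows by approximating $f$ by simple functions and using linearity plus weak-operator continuity of $\alpha$; and (\ref{eq0.2.7}) is the special case $f(\omega) = \ip{\omega}{\xi}$, combined with the observation $\ip{jI}{\widehat\sigma(\xi)} = \ip{I}{\xi}$ (since $\widehat\sigma$ just prepends a $0$ to $\xi$).

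The only step that is not pure algebraic bookkeeping is the measure-theoretic extension of paragraph one; after that, every assertion is a formal consequence of the Cuntz relations and the compatibility of $\sigma$, $\sigma_i$, and $\widehat\sigma$ with cylinder sets, so I do not anticipate a genuine obstacle beyond being careful with the semialgebra-to-$\sigma$-algebra extension of a PVM.
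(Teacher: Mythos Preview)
Your argument is correct and follows essentially the same route as the paper: verify the projection/orthogonality/refinement relations on cylinders via the Cuntz relations, extend $P$ to $\B(\K)$ by a standard measure-theoretic argument, identify $U(\xi)$ with the $P$-integral of the cylinder-constant character $\omega\mapsto\ip{\omega}{\xi}$, and then check the covariance formulas on cylinders and extend. The only cosmetic differences are that the paper invokes the Kolmogorov extension theorem directly rather than Carath\'eodory-plus-polarization, and verifies $U(\xi)U(\xi')=U(\xi+\xi')$ by a direct Cuntz computation instead of appealing to the functional calculus; neither is a substantive deviation.
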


\begin{proof}
For the definition of $P$, we use the Kolmogorov extension theorem, and we just have to check the consistency conditions. If $I=i_1\dots i_n$ and $I=i_1'\dots i_n'$ are different words then the cylinders $\C(I)$ and $\C(I')$ are disjoint. The Cuntz relations imply that the projections $P(\C(I))=S_IS_I^*$ and $P(\C(I'))=S_{I'}S_{I'}^*$ are orthogonal. 

Also for a finite word $I$, the cylinder $\C(I)$ is the disjoint union of the cylinders $\C(Ii)$ for $i\in\bz_N$. The Cuntz relations imply that 
$$\sum_{i\in\bz_N} P(\C(Ii))=\sum_{i\in\bz_N}S_{Ii}S_{Ii}^*=S_IS_I^*=P(\C(I)).$$
Thus, the consistency relations are satisfied and therefore $P$ extends to a projection-valued measure on $\B(\K)$.

For the operators $U$, we check first that they are consistently defined: if $\xi=\xi_1\dots\xi_p0\dots$ in $\widehat \K$ then $\xi_{p+1}=0$ and we have
$$\sum_{I\in\bz_N^{p+1}}\ip{I}{\xi}S_IS_I^*=\sum_{J\in\bz_N^p,i\in\bz_N}\ip{Ji}{\xi}S_JS_iS_i^*S_J^*=\sum_{J}\sum_i\ip{J}{\xi}S_JS_iS_i^*S_J^*=\sum_{J}\ip{J}{\xi}S_JS_J^*.$$

Then, take $\xi,\xi'\in \widehat \K$ and take $p$ larger than the length of both $\xi$ and $\xi'$ we have, using the Cuntz relations:
$$U(\xi)U(\xi')=\sum_{I,I'\in \bz_N^p}\ip{I}{\xi}\ip{I'}{\xi'}S_IS_I^*S_{I'}S_{I'}^*=\sum_{I}\ip{I}{\xi+\xi'}S_IS_I^*=U(\xi+\xi').$$

To verify \eqref{eq0.2.3}, take $\xi=\xi_1\dots\xi_p0\dots$ in $\widehat \K$ and note that the character $\omega\mapsto \ip{\omega}{\xi}$ is constant $\ip{I}{\xi}$ on the cylinders $\C(I)$ with $I\in\bz_N^p$. Therefore 
$$\int_K\ip{\omega}{\xi}dP(\omega)=\sum_{I\in\bz_N^p}\ip{I}{\xi}P(\C(I))=\sum_{I\in\bz_N^p}\ip{I}{\xi}S_IS_I^*=U(\xi).$$

Equation \eqref{eq0.2.3.1} can be checked easily on cylinders on account of the Cuntz relations; and, since the sigma-algebra $\B(\K)$ is generated as a monotone class by cylinders, it extends to all Borel sets. Same for \eqref{eq0.2.3.2}. Then \eqref{eq0.2.5} follows from \eqref{eq0.2.3.1}; and \eqref{eq0.2.6} follows from \eqref{eq0.2.5} by the usual approximation argument and \eqref{eq0.2.7} follows from \eqref{eq0.2.6} by taking $f(\omega)=\ip{\omega}{\xi}$. 
\end{proof}

\begin{definition}\label{def2.3}
We call the projection valued measure $P$ defined in Theorem \ref{th0.2}, the {\it projection valued measure associated to the representation} $(S_i)_{i\in\bz_N}$. 

We will use also the following notations, for an infinite word $\omega\in \K_N$ and for a finite word $I$:
$$P(\omega)=P(\{\omega\}),\quad P(I)=P(\C(I))=S_IS_I^*.$$
\end{definition}

\begin{corollary}\label{cor2.3}
For any Borel set $A$ in $\K_N$ and $i\in\bz_N$:
\begin{equation}
S_iP(\sigma_i^{-1}(A))=P(A)S_i,\quad S_iP(A)=P(\sigma^{-1}(A))S_i.
\label{eqco2.2.1}
\end{equation}
If $A$ is a Borel subset of $\K$ with $A=\sigma^{-1}(A)$ then $P(A)$ commutes with the representation of $\O_N$. 
\end{corollary}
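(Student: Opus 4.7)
The plan is to derive both identities in \eqref{eqco2.2.1} from the covariance relation $S_iP(A)S_i^*=P(\sigma_i(A))$ established in \eqref{eq0.2.3.1}, leveraging only the Cuntz relations $S_i^*S_i=I$ and $S_j^*S_i=\delta_{ij}I$. The single useful intermediate step is: multiply \eqref{eq0.2.3.1} on the right by $S_i$ and use $S_i^*S_i=I$ to obtain
\begin{equation*}
S_iP(B)=P(\sigma_i(B))S_i\quad\text{for every Borel }B\subseteq\K.
\end{equation*}
Everything else is manipulation of this formula.

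For the first identity, I would substitute $B=\sigma_i^{-1}(A)$. Since $\sigma_i$ is injective with image $\C(i)$, we have $\sigma_i(\sigma_i^{-1}(A))=A\cap\C(i)$, so $S_iP(\sigma_i^{-1}(A))=P(A\cap\C(i))S_i$. On the other side, using $S_i=S_iS_i^*S_i=P(\C(i))S_i$, one gets $P(A)S_i=P(A)P(\C(i))S_i=P(A\cap\C(i))S_i$, which matches. For the second identity I would decompose $\sigma^{-1}(A)=\bigsqcup_{j\in\bz_N}\sigma_j(A)$; this is a disjoint union because $\sigma_j(A)\subseteq\C(j)$ and the cylinders $\C(j)$ are mutually disjoint. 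Hence $P(\sigma^{-1}(A))=\sum_{j}P(\sigma_j(A))$, and multiplying on the right by $S_i$ the cross-terms vanish: for $j\neq i$, write $P(\sigma_j(A))=P(\sigma_j(A))S_jS_j^*$ and apply $S_j^*S_i=0$. Only the $j=i$ term survives, giving $P(\sigma^{-1}(A))S_i=P(\sigma_i(A))S_i=S_iP(A)$.

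The last claim then drops out: if $A=\sigma^{-1}(A)$, the second identity yields $S_iP(A)=P(A)S_i$ for every $i\in\bz_N$, and taking adjoints (using that $P(A)$ is self-adjoint) gives $S_i^*P(A)=P(A)S_i^*$; so $P(A)$ commutes with every generator and hence with the whole representation of $\O_N$. There is no real obstacle here — the only thing to watch is the bookkeeping between the forward shift $\sigma$ and the inverse branches $\sigma_i$; the entire argument rests on the single observation that $P(\C(j))S_i=\delta_{ij}S_i$, which encodes the Cuntz orthogonality.
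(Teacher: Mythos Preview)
Your argument is correct and, for the first identity, essentially identical to the paper's: both multiply \eqref{eq0.2.3.1} on the right by $S_i$, use $\sigma_i(\sigma_i^{-1}(A))=A\cap\C(i)$, and then absorb the factor $P(\C(i))$ into $S_i$ via $S_iS_i^*S_i=S_i$.

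For the second identity you take a slightly different route. You decompose $\sigma^{-1}(A)=\bigsqcup_j\sigma_j(A)$ and kill the cross terms using $P(\C(j))S_i=\delta_{ij}S_i$. The paper instead bootstraps directly from the first identity: applying it with $\sigma^{-1}(A)$ in place of $A$ gives $P(\sigma^{-1}(A))S_i=S_iP(\sigma_i^{-1}\sigma^{-1}(A))=S_iP((\sigma\circ\sigma_i)^{-1}(A))=S_iP(A)$, since $\sigma\circ\sigma_i=\mathrm{id}$. This is a one-line shortcut that avoids the decomposition entirely; your version is equally valid and perhaps makes the role of the Cuntz orthogonality more explicit. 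The final commutation claim is handled identically in both.
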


\begin{proof}
From \eqref{eq0.2.3.1} we have 
$$S_iP(\sigma_i^{-1}(A))=P(\sigma_i(\sigma_i^{-1}(A)))S_i=P(A\cap \C(i))S_i=P(A)P(\C(i))S_i=P(A)S_iS_i^*S_i=P(A)S_i.$$
Then 
$$P(\sigma^{-1}(A))S_i=S_iP(\sigma_i^{-1}(\sigma^{-1}(A)))=S_iP((\sigma\circ\sigma_i)^{-1}(A))=S_iP(A).$$
If $A=\sigma^{-1}(A)$ then, \eqref{eqco2.2.1} shows that $P(A)$ commutes with the isometries and, taking the adjoint, it commutes also with their adjoints. 
\end{proof}

\begin{corollary}\label{lem2.7}
Let $(S_i)_{i\in\bz_N}$ be a representation of $\O_N$ on a Hilbert space $\H$ and let $P$ be the associated projection valued measure. For $\psi\in\H$, set 
$$\mu_\psi(B)=\|P(B)\psi\|^2=\ip{P(B)\psi}{\psi},\quad(B\in\B(\K)).$$
Then 
$$\mu_\psi(B)=\sum_{i\in\bz_N}\mu_{S_i^*\psi}(\sigma_i^{-1}(B)),\quad(B\in\B(\K)).$$
\end{corollary}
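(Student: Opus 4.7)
The plan is to apply the identity
\[
\sum_{i\in\bz_N} S_i P(\sigma_i^{-1}(B)) S_i^* = P(B)
\]
from \eqref{eq0.2.3.2} inside the scalar product $\ip{P(B)\psi}{\psi}$. Since $P(B)$ is a projection, $\mu_\psi(B) = \|P(B)\psi\|^2 = \ip{P(B)\psi}{\psi}$, and substituting the sum on the right gives
\[
\mu_\psi(B) = \sum_{i\in\bz_N} \ip{S_i P(\sigma_i^{-1}(B)) S_i^* \psi}{\psi}.
\]

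Next I would move $S_i$ across the inner product to its adjoint, obtaining
\[
\ip{S_i P(\sigma_i^{-1}(B)) S_i^*\psi}{\psi} = \ip{P(\sigma_i^{-1}(B)) S_i^*\psi}{S_i^*\psi},
\]
and then invoke again that $P(\sigma_i^{-1}(B))$ is a projection, so this last quantity equals $\|P(\sigma_i^{-1}(B)) S_i^*\psi\|^2 = \mu_{S_i^*\psi}(\sigma_i^{-1}(B))$. Summing over $i\in\bz_N$ yields the desired identity.

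There is no real obstacle here: the statement is essentially the scalar version of the operator identity \eqref{eq0.2.3.2}, and the only tools used are the definition of $\mu_\psi$, the projection property $P(\cdot)^2 = P(\cdot)$, and taking adjoints. Convergence of the sum on the right is automatic since the partial sums $\sum_{i} \ip{S_i P(\sigma_i^{-1}(B)) S_i^*\psi}{\psi}$ reduce to a finite sum over $i\in\bz_N$.
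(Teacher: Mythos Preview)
Your proof is correct and follows exactly the paper's approach: the paper simply says the result follows directly from \eqref{eq0.2.3.2}, and you have spelled out that one-line computation.
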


\begin{proof}

Follows directly from \eqref{eq0.2.3.2}.
\end{proof}

\begin{proposition}\label{pr2.8}
Let $(S_i)_{i\in \bz_N}$ and $(\tilde S_i)_{i\in\bz_N}$ be two representations of $\O_N$ on the Hilbert spaces $\H$ and $\tilde \H$ and let $X:\H\rightarrow \tilde \H$ be an intertwining operator, i.e., 
\begin{equation}
\tilde S_iX=XS_i,\quad \tilde S_i^*X=XS_i^*,\quad (i\in\bz_N).
\label{eqpr2.8.0}
\end{equation}

 Let $P$, $\tilde P$ be the associated projection valued measures. Then $X$ intertwines $P$ and $\tilde P$, i.e., 
$$XP(A)=\tilde P(A)X,\quad (A\in \B(\K_N)).$$

If $A,B$ are two disjoint Borel sets in $\K_N$ then $\tilde P(A)XP(B)=0$. In particular if $w,w'$ are two different finite words of the same length and if $\omega, \omega'$ are two different infinite words, then 
\begin{equation}
\tilde P(w)XP(w')=0,\quad \tilde P(\omega)XP(\omega)=0.
\label{eqpr2.8.1}
\end{equation}

Also
\begin{equation}
\tilde S_wXS_w^*=\tilde P(w)X P(w).
\label{eqpr2.8.2}
\end{equation}

If $P$ and $\tilde P$ are supported on disjoint sets , i.e., there exist disjoint Borel sets $A,B$ in $\K_N$ such that $\tilde P(A)=I$, $P(B)=I$, then the representations are disjoint.
\end{proposition}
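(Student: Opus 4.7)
The plan is to derive every conclusion from a single initial fact: that $X$ intertwines the projection-valued measures $P$ and $\tilde P$. So I would establish this first. Since $X$ intertwines each generator $S_i$ and its adjoint with $\tilde S_i, \tilde S_i^*$, induction on word length gives $X S_I = \tilde S_I X$ and $X S_I^* = \tilde S_I^* X$ for every finite word $I$. Hence on any cylinder,
\[
X P(\C(I)) = X S_I S_I^* = \tilde S_I X S_I^* = \tilde S_I \tilde S_I^* X = \tilde P(\C(I)) X.
\]
The collection of Borel sets $A \subseteq \K_N$ for which $X P(A) = \tilde P(A) X$ is closed under complements and countable disjoint unions (thanks to strong continuity of the projection-valued measures together with boundedness of $X$) and contains the algebra generated by cylinders, so by the monotone class argument already used in the proof of Theorem \ref{th0.2} it must equal $\B(\K_N)$.

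With the intertwining in hand the remaining statements are algebraic. For disjoint Borel sets $A, B$, orthogonality of $P(A)$ and $P(B)$ combined with the intertwining gives
\[
\tilde P(A) X P(B) = X P(A) P(B) = X P(A \cap B) = 0,
\]
which specializes to the identities in \eqref{eqpr2.8.1} by taking $A, B$ to be distinct cylinders of the same length, or distinct infinite-word singletons. For the formula $\tilde S_w X S_w^* = \tilde P(w) X P(w)$ I would pull $X$ to the outside and then use the intertwining twice together with $\tilde P(w)^2 = \tilde P(w)$:
\[
\tilde S_w X S_w^* = X S_w S_w^* = X P(w) = X P(w)^2 = \tilde P(w) X P(w).
\]

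For the concluding disjointness assertion, assume $\tilde P(A) = I$ and $P(B) = I$ with $A \cap B = \emptyset$. Then $B \subseteq A^c$ forces $I = P(B) \leq P(A^c) = I - P(A)$, so $P(A) = 0$. Any intertwiner $X$ therefore satisfies
\[
X = \tilde P(A)\,X = X\,P(A) = 0,
\]
so the intertwiner space between the two $\O_N$-representations is trivial, and hence they are disjoint.

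The only point that requires genuine care is the first step: the monotone-class extension of the cylinder identity to all Borel sets in $\K_N$. Everything afterward reduces to the Cuntz relations packaged as $P(\C(I)) = S_I S_I^*$ and the idempotent, orthogonality, and normalization properties of projection-valued measures, so I do not anticipate any serious obstacle beyond carrying out that extension cleanly.
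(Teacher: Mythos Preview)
Your proof is correct. The principal difference from the paper's argument lies in how you establish the key intertwining $XP(A)=\tilde P(A)X$: you verify it on cylinders via $P(\C(I))=S_IS_I^*$ and then extend to all Borel sets by a monotone class argument, whereas the paper observes that $X$ intertwines the unitary groups $U(\xi)$ and $\tilde U(\xi)$ of Theorem~\ref{th0.2} and then invokes the SNAG spectral decomposition \eqref{eq0.2.3} to conclude that $X$ intertwines their spectral resolutions $P$ and $\tilde P$. Your route is more self-contained and elementary (it avoids appealing to spectral theory for unitary representations of abelian groups), while the paper's route is shorter and more conceptual once that machinery is in place. The derivations of \eqref{eqpr2.8.1}, \eqref{eqpr2.8.2}, and the final disjointness statement are essentially the same in both proofs; the paper handles the last point in one line via $X=\tilde P(A)XP(B)=0$, which you could also use in place of the intermediate step $P(A)=0$.
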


\begin{proof}
If $X$ intertwines the representations, then $X$ intertwines the two unitary groups $U(\xi)$, $\tilde U(\xi)$, $\xi\in\widehat \K_N$. Then, $X$ must intertwine their two spectral resolutions which are $P$ and $\tilde P$. 
If $A$, $B$ are disjoint sets, then 
$$\tilde P(A)X P(B)=XP(A)P(B)=0.$$
Equations \eqref{eqpr2.8.1} and \eqref{eqpr2.8.2} follow immediately. 

If $P$ and $\tilde P$ are supported on disjoint sets, we have 
$$X=\tilde P(A)XP(B)=0,$$
so the representations are disjoint.

\end{proof}

\section{$S_i^*$-invariant spaces}
 In the study of decomposition theory for representations $(S_i)_{i\in\bz_N}$ of the Cuntz algebra $\O_N$ acting on a Hilbert space $\H$, care must be taken in the distinction between closed subspaces in $\H$ which are invariant under the system $S_i$, vs the tuple of adjoint operators $S_i^*$. Indeed for reasons of applications to quadrature mirror filters in signal processing \cite{Jor06} one finds that invariance under the system $(S_i^*)_{i\in\bz_N}$ is the right starting point. This will be justified below in a geometric framework.

\begin{proposition}\label{pr2.4}
Let $(S_i)_{i\in\bz_N}$ be a representation of $\O_N$ on a Hilbert space $\H$. Let $M$ be a closed subspace of $\H$ and let $P_M$ be the projection onto $M$. The following statements are equivalent:
\begin{enumerate}
	\item $M$ is invariant for the operators $S_i^*$, i.e., $P_MS_i^*P_M=S_i^*P_M$, $i\in\bz_N$. 
	\item With the endomorphism $\alpha$ as in \eqref{eq0.2.4}, $P_M\leq \alpha(P_M)$. 
\end{enumerate}
In this case $\{\alpha^n(P_M)\}_{n\in]\bn}$ is an increasing sequence of projections; the projection $Q=\vee_n\alpha^n(P_M)$ commutes with the representation; if $Q_n=\alpha^n(P_M)-\alpha^{n-1}(P_M)$ for $n\geq1$, and $Q_0=P_M$, then the projections $Q_n$, $n\geq 0$ are mutually orthogonal and 
$$\oplus_{n=0}^\infty Q_n=Q.$$

The following relations hold:
\begin{equation}
S_i^*\alpha^n(P_M)=\alpha^{n-1}(P_M)S_i^*,\quad S_i\alpha^{n-1}(P_M)=\alpha^n(P_M)S_i,\quad (n\geq 1, i\in\bz_n),
\label{eqpr2.4.1}
\end{equation}

\begin{equation}
S_i^*Q_1=P_MS_i^*(I-P_M),\quad             S_i^*Q_{n+1}=Q_{n}S_i^*,\quad S_iQ_n=Q_{n+1}S_i,\quad (n\geq1, i\in\bz_N).
\label{eqpr2.4.2}
\end{equation}

Also, the following are equivalent
\begin{enumerate}
	\item $M$ is cyclic for the representation.
	\item $Q=\vee_n\alpha^n(P_M)=I$.
	\item $\oplus_{n\geq0}Q_n=I$.
\end{enumerate}

\end{proposition}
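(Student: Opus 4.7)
The whole proposition rests on two algebraic identities which follow at once from the Cuntz relations: for any $A\in B(\H)$,
$$\alpha(A)S_i=S_iA,\qquad S_i^*\alpha(A)=AS_i^*,$$
and the fact that, since $S_i^*S_j=\delta_{ij}I$, $\alpha$ sends projections to projections (because $\alpha(P)^2=\sum_{i,j}S_iPS_i^*S_jPS_j^*=\sum_iS_iP^2S_i^*=\alpha(P)$). I will take these as the main computational tools.

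For the equivalence of (i) and (ii), I start from the quadratic form identity $\langle\alpha(P_M)v,v\rangle=\sum_i\|P_MS_i^*v\|^2$ and $\sum_i\|S_i^*v\|^2=\|v\|^2$. If $P_M\le\alpha(P_M)$, then for $v\in M$ one has $\|v\|^2\le\sum_i\|P_MS_i^*v\|^2\le\sum_i\|S_i^*v\|^2=\|v\|^2$, forcing $P_MS_i^*v=S_i^*v$, i.e.\ $S_i^*M\subseteq M$. Conversely, if $S_i^*M\subseteq M$, then $P_MS_i^*P_M=S_i^*P_M$, so $S_iP_MS_i^*P_M=S_iS_i^*P_M$, and summing gives $\alpha(P_M)P_M=P_M$; since $\alpha(P_M)$ is a projection, this is the same as $P_M\le\alpha(P_M)$.

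For the second part, monotonicity $\alpha^n(P_M)\le\alpha^{n+1}(P_M)$ is obtained by iterating the positive endomorphism $\alpha$ on the inequality $P_M\le\alpha(P_M)$. The intertwining relations \eqref{eqpr2.4.1} follow by a direct induction on $n$ from the two identities displayed above. Taking the strong-operator limit in $\alpha^n(P_M)S_i=S_i\alpha^{n-1}(P_M)$ and its adjoint version gives $QS_i=S_iQ$ and $QS_i^*=S_i^*Q$, so $Q$ commutes with the representation. Orthogonality and summation of the $Q_n$ is the standard telescoping for an increasing sequence of projections: $\sum_{n=0}^N Q_n=\alpha^N(P_M)\nearrow Q$. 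The relations \eqref{eqpr2.4.2} for $n\ge1$ are obtained by subtracting two instances of \eqref{eqpr2.4.1}; the boundary case $S_i^*Q_1=P_MS_i^*(I-P_M)$ needs one extra step: expand $S_i^*Q_1=P_MS_i^*-S_i^*P_M$ and use $S_i^*P_M=P_MS_i^*P_M$ (which is exactly the $S_i^*$-invariance of $M$) to rewrite this as $P_MS_i^*-P_MS_i^*P_M$.

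Finally, for the cyclicity equivalences, the key observation is that
$$\alpha^n(P_M)=\sum_{|I|=n}S_IP_MS_I^*,\qquad\text{so}\qquad\alpha^n(P_M)\H=\overline{\operatorname{span}}\{S_Iv:v\in M,\ |I|=n\}.$$
Since $M$ is $S_i^*$-invariant, for any finite word $J$ and $v\in M$ one has $S_J^*v\in M$, so the cyclic span $\overline{\operatorname{span}}\{S_IS_J^*v:v\in M\}$ collapses to $\overline{\operatorname{span}}\{S_Iw:w\in M,\ I\ \text{finite}\}=\bigcup_n\alpha^n(P_M)\H=Q\H$. Thus $M$ is cyclic iff $Q\H=\H$ iff $Q=I$, which (by the orthogonal decomposition $Q=\oplus_nQ_n$) is the same as $\oplus_n Q_n=I$. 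The main obstacle throughout is really only the careful handling of the boundary case $n=0$ in the intertwining formulas, which is the one place where the $S_i^*$-invariance of $M$ itself (not just of $\alpha^n(P_M)$) must be invoked; everything else is bookkeeping from the two basic identities for $\alpha$.
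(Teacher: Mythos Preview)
Your proof is correct and follows essentially the same route as the paper's: the equivalence (i)$\Leftrightarrow$(ii) via $\alpha(P_M)v=v$ for $v\in M$, monotonicity of $\alpha^n(P_M)$ by iterating the endomorphism, the intertwining identities from the Cuntz relations, and the identification $\alpha^n(P_M)\H=\Span\{S_Iv:|I|=n,\ v\in M\}$ for the cyclicity equivalence. You spell out a few details the paper leaves implicit (the SOT-limit argument for $Q$ in the commutant, and the boundary case $S_i^*Q_1=P_MS_i^*(I-P_M)$), but the approach is the same.
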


\begin{proof}
For the first equivalence, if $M$ is invariant, take $v\in M$, then, for all $i\in\bz_N$, $S_i^*v\in M$ so $P_MS_i^*v=S_i^*v$ so 
$$\alpha(P_M)v=\sum_{i\in \bz_N}S_iP_MS_i^*v=\sum_{i\in\bz_N}S_iS_i^*v=v.$$

Conversely, if $P_M\leq \alpha(P_M)$ then take $v\in M$ then 
$$v=\alpha(P_M)v=\sum_{i\in\bz_N}S_iP_MS_i^*v,$$
so $S_j^*v=P_MS_j^*v$, that is $S_j^*v\in M$ for all $j\in\bz_N$.

Since $\alpha$ is an endomorphism $\alpha(P_M)\leq \alpha(\alpha(P_M))$ so, by induction, the sequence $\{\alpha^n(P_M)\}$ is increasing. Clearly, this implies that the projections $Q_n$ are mutually orthogonal. Equations \eqref{eqpr2.4.1}, \eqref{eqpr2.4.2} follow directly from the Cuntz relations and the invariance of $M$.

For the last equivalence, it is clear that (ii) is equivalent to (iii). 

Note that 
\begin{equation}
\alpha^n(P_M)\H=\Span\{S_Iv : |I|=n, v\in M\}.
\label{eqpr2.4.3}
\end{equation}
Indeed, if $v=\sum_{|I|=n}S_IP_MS_I^*v$, then, since $P_MS_I^*v\in M$, the vector $v$ is the span. Conversely, take $S_Jv$ with $v\in M$ and $|J|=n$ then $\alpha^n(P_M)S_Jv=S_Jv$ so $S_Jv$ is in $\alpha^n(P_M)\H$.

The equivalence of (i) and (ii) follows from this.

\end{proof}

\begin{theorem}\label{th2.6}
Let $(S_i)_{i\in\bz_N}$ be a representation of $\O_N$ on a Hilbert space $\H$ and let $P$ be the associated projection valued measure. Suppose $M$ is a finite dimensional, cyclic invariant subspace for $S_i^*$, $i\in\bz_N$. Then for any non-periodic word $\omega\in \K$, $P(\omega)=0$. For any cyclic word $\omega$, $P(\omega)\H\subset M$. 
\end{theorem}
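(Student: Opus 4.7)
The approach is to follow what happens to a vector $v\in P(\omega)\H$ under the adjoint isometries $S_{\omega|n}^*$, then exploit the cyclicity and finite-dimensionality of $M$ to pin down where $v$ can live.

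First I would derive a clean formula for $\alpha^n(P_M)v$ when $v\in P(\omega)\H$. Since $P(\omega|n)\geq P(\omega)$, $S_{\omega|n}S_{\omega|n}^*v=P(\omega|n)v=v$; and for any other word $I$ of length $n$ the Cuntz relation $S_I^*S_{\omega|n}=0$ forces $S_I^*v=0$. Hence the sum $\alpha^n(P_M)v=\sum_{|I|=n}S_IP_MS_I^*v$ collapses to
\[
\alpha^n(P_M)v=S_{\omega|n}P_MS_{\omega|n}^*v,\qquad v-\alpha^n(P_M)v=S_{\omega|n}(I-P_M)S_{\omega|n}^*v.
\]
By Proposition \ref{pr2.4} and the cyclicity of $M$, $\alpha^n(P_M)v\to v$, and since $S_{\omega|n}$ is an isometry we conclude $\|(I-P_M)u_n\|\to 0$, where $u_n:=S_{\omega|n}^*v\in P(\sigma^n\omega)\H$ has $\|u_n\|=\|v\|$. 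Thus the backward orbit $(u_n)$ enters $M$ asymptotically.

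For the non-periodic case, the iterates $\sigma^n\omega$ are all distinct, so the subspaces $P(\sigma^n\omega)\H$ are pairwise orthogonal and $\ip{u_n}{u_m}=0$ for $n\neq m$. Choose $f_n\in M$ with $\|u_n-f_n\|\to 0$, so $\|f_n\|\to\|v\|$; a short Cauchy--Schwarz estimate then gives $\ip{f_n}{f_m}\to 0$ as $n,m\to\infty$, $n\neq m$. By finite-dimensionality of $M$, extract a subsequence $f_{n_k}\to f\in M$ with $\|f\|=\|v\|$; along this subsequence $\ip{f_{n_k}}{f_{n_j}}\to\|f\|^2=\|v\|^2$, which must reconcile with the limit $0$, forcing $v=0$.

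For the cyclic case $\omega=\underline{\omega_1\dots\omega_p}$, the equality $\omega|p\cdot\omega=\omega$ yields $S_{\omega|p}P(\omega)=P(\omega)S_{\omega|p}$, so $U:=S_{\omega|p}^*|_{P(\omega)\H}$ is a unitary operator on $P(\omega)\H$. I next establish the key preliminary $\dim P(\omega)\H\leq\dim M$: the Step 1 identity gives $\ip{\alpha^n(P_M)v_i}{v_j}=\ip{P_MS_{\omega|n}^*v_i}{P_MS_{\omega|n}^*v_j}$, and since $\alpha^n(P_M)\to I$ strongly, for any orthonormal $v_1,\dots,v_k\in P(\omega)\H$ the vectors $\{P_MS_{\omega|n}^*v_i\}$ become asymptotically orthonormal in $M$, forcing $k\leq\dim M$. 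With $P(\omega)\H$ finite-dimensional, $U$ diagonalizes with unimodular eigenvalues. For an eigenvector $Uv=\lambda v$, $u_{np}=\lambda^n v$ satisfies $\|(I-P_M)\lambda^n v\|=\|(I-P_M)v\|$, and this must tend to $0$; hence $v\in M$. Since eigenvectors span $P(\omega)\H$, we conclude $P(\omega)\H\subset M$.

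The main technical obstacle is the dimension bound $\dim P(\omega)\H\leq\dim M$ in the cyclic case; without it, $U$ could have continuous spectrum and the eigenvector/linear-algebra argument would collapse. The identity $\alpha^n(P_M)v=S_{\omega|n}P_MS_{\omega|n}^*v$ from Step 1 is the central technical tool throughout, driving both the non-periodic contradiction and the cyclic dimension bound.
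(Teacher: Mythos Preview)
Your proof is correct. The central engine---that $\|(I-P_M)S_{\omega|n}^*v\|\to 0$ for $v\in P(\omega)\H$---is the same as the paper's, though the paper isolates it as a separate lemma (Lemma~\ref{lem2.8}) via the telescoping decomposition $Q_n=\alpha^n(P_M)-\alpha^{n-1}(P_M)$, whereas you obtain it directly from the single-term identity $\alpha^n(P_M)v=S_{\omega|n}P_MS_{\omega|n}^*v$. The non-periodic argument is then essentially identical in both.

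The cyclic case is where the two proofs genuinely diverge. The paper never proves the dimension bound $\dim P(\omega)\H\leq\dim M$; instead it argues by contradiction: writing $P(\omega)\H=(M\cap P(\omega)\H)\oplus L$, it observes that $S_I^*$ (with $I=\omega|p$) preserves $L$, picks a unit vector $v\in L$, extracts a limit $u$ of $P_MS_{I^n}^*v$ in $M$, and shows $u$ lands in both $M\cap P(\omega)\H$ and its orthogonal complement, forcing $u=0$ while $\|u\|=\|v\|$. Your route---bound the dimension via the Gram-matrix limit $\ip{\alpha^n(P_M)v_i}{v_j}\to\delta_{ij}$, then diagonalize the unitary $U=S_{\omega|p}^*|_{P(\omega)\H}$ and check eigenvectors lie in $M$---is a legitimate alternative, and the eigenvector step is arguably cleaner than the paper's compactness contradiction. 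The paper's approach has the advantage of not needing to establish finite-dimensionality of $P(\omega)\H$ as a separate step, while yours gives that finite-dimensionality as a byproduct and makes the mechanism (spectral theory of a unitary on a finite-dimensional space) transparent.
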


\begin{proof}

We need some lemmas:

\begin{lemma}\label{lem2.7.1}
For every infinite word $\omega$ in $\K$:
 \begin{equation}
 S_iP(\omega)S_i^*=P(i\omega),\mbox{ so }S_{\omega_1}^* P(\omega)S_{\omega_1}=P(\sigma(\omega)),\mbox{ and }S_{i}^* P(\omega)S_{i}=0\mbox{ for }i\neq \omega_1.
\label{eq1.4.1}
\end{equation}
Therefore
\begin{equation}
S_iP(\omega)\H=P(i\omega)\H,\quad S_{\omega_1}^*P(\omega)\H=P(\sigma(\omega))\H,\quad S_i^*P(\omega)\H=0\mbox{ for }i\neq \omega_1.
\label{eq1.4.2}
\end{equation}
Also, $S_i$ is unitary between $P(\omega)\H$ and $P(i\omega)\H$ and $S_i^*$ is unitary between $P(i\omega)\H$ and $P(\omega)\H$. 
\end{lemma}

\begin{proof}

Everything follows easily from \eqref{eq0.2.3.1}.
\end{proof}

\begin{lemma}\label{lem2.8}
If $M$ is a cyclic invariant subspace for $S_i^*$, $i\in\bz_N$, and $P_M$ is the projection onto $M$, then, for every $v\in \H$ and every $\epsilon>0$ there exists $n_\epsilon\in\bn$ such that for any finite word $I$ with $|I|\geq n_\epsilon$, we have $\|P_MS_I^*v-S_I^*v\|<\epsilon$. 

\end{lemma}

\begin{proof}

Write $v=\sum_{n=0}^\infty Q_nv$ as in Proposition \ref{pr2.4}. Then $\sum_n\|Q_nv\|^2=\|v\|^2<\infty$. So, there exists $n_\epsilon$ such that $\sum_{n\geq n_\epsilon}\|Q_nv\|^2<\epsilon$. 

Now take a finite word $I$ with $|I|\geq n_\epsilon$. For $n\leq |I|$, using \eqref{eqpr2.4.2}, we have $S_I^*Q_nv\in M$. 

For $n>|I|$ we have $S_I^*Q_nv=Q_{n-|I|}S_I^*v$. Then 
$$\left\|P_MS_I^*v-S_I^*v\right\|^2=\left\|\sum_{n\leq |I|}(P_MS_I^*Q_nv-S_I^*Q_nv)+\sum_{n> |I|}(P_MS_I^*Q_nv-S_I^*Q_nv)\right\|^2$$
$$=\left\|\sum_{n>|I|}(0-Q_{n-|I|}S_I^*v)\right\|^2=\sum_{n>|I|}\|Q_{n-|I|}S_I^*v\|^2=\sum_{n>|I|}\|S_I^*Q_nv\|^2\leq \sum_{n>|I|}\|Q_nv\|^2<\epsilon.$$

\end{proof}

We return to the proof of the theorem. Let $\omega$ be a non-periodic word. Suppose $P(\omega)\neq 0$ and let $v\in P(\omega)\H$, $\|v\|=1$. Then, with Lemma \ref{lem2.7.1}, the sequence $S_{\omega|k}^*v$, $k\geq 0$ is orthonormal.

With Lemma \ref{lem2.8}, $P_MS_{\omega|k}^*v-S_{\omega|K}^*v$ converges to 0.  Since $M$ is finite dimensional there exists a subsequence such that $P_MS_{\omega|n_k}^*v$ converges to some $u\in M$. Then also $S_{\omega|n_k}^*v$ converges to $u$. But, since these vectors are orthonormal, the distance between them is always $\sqrt{2}$ so the subsequence cannot converge.

Now take $\omega$ a cyclic word $\omega=\underline I$ and assume $P(\omega)\neq 0$ and suppose there is a $v\in P(\omega)\H$, $v\neq 0$ such that $v\perp (P(\omega)\H\cap M)$.

Since $M$ is invariant and with Lemma \ref{lem2.7.1}, $S_I^*(M\cap P(\omega)\H)\subset M\cap P(\omega)\H$. Since $M$ is finite dimensional and $S_I^*$ is unitary on $P(\omega)\H$, we actually get $S_I^*(M\cap P(\omega)\H)=M\cap P(\omega)\H$. 
Let $L$ be the orthogonal complement of $M\cap P(\omega)\H$ in $P(\omega)\H$. Then we also have $S_I^*L=L$, again, since $S_I^*$ is unitary on $P(\omega)$. Then, as $v\in L$ we get that $S_{I^n}^*v\in L$ and $\|S_{I^n}^*v\|=1$, for all $n\geq 0$.

With Lemma \ref{lem2.8}, we have $P_MS_{I^n}^*v-S_{I^n}^*v\rightarrow 0$. Since $M$ is finite dimensional there is subsequence such that $P_MS_{I^{n_k}}^*v$ converges to some $u\in M$. Also, we have 
$$P_{M\cap P(\omega)\H}P_MS_{I^n}^*v=P_MP_{M\cap P(\omega)\H}S_{I^n}^*v=0,$$
so $P_MS_{I^n}^*v\perp (M\cap P(\omega)\H)$. Then $u\perp (M \cap P(\omega)\H)$. 

We have also $S_{I^{n_k}}^*v\rightarrow u$ so $u\in P(\omega)\H$ and therefore $u\in M\cap P(\omega)\H$. But this means that $u=0$ because $u$ belongs to $M\cap P(\omega)$ and its complement; and also, $\|u\|=\lim \|S_{I^{n_k}}^*v\|=\|v\|\neq 0$, a contradiction. 

\end{proof}

\begin{example}\label{ex3.5}
If the invariant space in Theorem \ref{th2.6} is not finite dimensional, then the result is not necessarily true. Let $\W(\underline 0)$ be the set of infinite words that end in $\underline0$. Define a representation of the Cuntz algebra $\O_2$ on $l^2(\W(\underline0))$ by 
$$S_i\delta_{w\underline0}=\delta_{iw\underline0},\quad i=0,1,$$
where $\delta_{w\underline0}$ is the canonical basis for $l^2(\W(\underline0))$. 

Consider the unitary shift operator $U$ on $l^2(\bz)$, $U\delta_n=\delta_{n+1}$, $n\in\bz$. 

Define the operators $\tilde S_i$ on $l^2(\bz)\otimes l^2(\W(\underline0))$ by 
$$\tilde S_0(\delta_n\otimes\delta_{\underline0})=\delta_{n+1}\otimes\delta_{\underline0},\quad \tilde S_0(\delta_n\otimes\delta_{w\underline0})=\delta_{n}\otimes\delta_{0w\underline0},\quad(w\neq00\dots0)$$$$\tilde S_1(\delta_n\otimes \delta_{w\underline 0})=\delta_n\otimes\delta_{1w\underline0}.$$
Thus
$$\tilde S_0=U\otimes P_0+I\otimes (S_0P_0^\perp ),\quad \tilde S_1=I\otimes S_1,$$
where $P_0$ is the projection onto the span of $\delta_{\underline 0}$. Note that $P_0$ commutes with $S_0$ and $S_0^*$ and so the same is true for $P_0^\perp$. 

It is easy to check that $(\tilde S_i)_{i=0,1}$ defines a representation of $\O_2$. 

Let 
$$M=\Span\{\delta_n\otimes \delta_{w\underline 0} : n\leq 0, w\mbox{ finite word}\}.$$
Then $M$ is invariant for $S_0^*,S_1^*$ and cyclic for the representation. 

However, if $P,\tilde P$ are the associated projection valued measure, then 
$$\tilde P(\underline 0)=\lim \tilde S_0^n(\tilde S_0^*)^n=\lim_n I\otimes P_0+ I\otimes S_0^n(S_0^*)^nP_0^\perp=\lim_n I\otimes P(\underbrace{0\dots 0}_{\mbox{$n$ times}})
=P_{l^2(\bz)\otimes\Span\{\delta_{\underline0}\}}$$ is not contained in $P_M$. 
\end{example}

    The theorem of Wold states that if $S$ is a single isometry acting in a fixed Hilbert space $\H$, then $\H$ splits up as a sum of two orthogonal subspaces, canonical, in the sense that $S$ will restrict to the first subspace to be a shift operator, while $S$ restricts to a unitary operator in the second closed subspace. Now a representation of  $\O_N$ is a system of isometries (subject to \eqref{eq2.0.1}), and so the Wold decomposition applied to one $S_i$ will then have to satisfy consistency properties with respect to the other isometries. The following Remark is fleshing out these consistency conditions. To ease notation below we will denote the respective closed subspaces in the Hilbert space $\H$ with subscripts ``shift'' and ``unitary'' respectively. For further details regarding the Wold decomposition and $\O_N$-representations, see \cite{BJO04}.

\begin{remark}\label{rem3.5} The atoms corresponding to cycle points $\omega=\underline I$ coincide with the unitary part of the Wold decomposition for the isometry $S_I$.
Let $I=i_1\dots i_p$ be some finite word and let $\Unitary(S_I)$ be the unitary part in the Wold decomposition of $S_I$, i.e.,
$$\Unitary(S_I)=\bigcap_{n\geq1}S_I^n\H=\bigcap_{n\geq 0}S_I^n(S_I^*)^n\H=\{x\in\H : \|S_I^nx\|=\|x\|,\mbox{ for all }n\geq0\}.$$

But since $$S_I^n(S_I^n)^*=P(\underbrace{II\dots I}_{\mbox{$n$ times}})$$ it follows that 
$$\Unitary(S_I)=P(\underline I)\H.$$

If $I,I'$ are two finite words such that $\underline I\neq \underline I'$ then it follows that the unitary parts of the Wold decompositions are orthogonal
$$\Unitary(S_I)\perp \Unitary(S_{I'}).$$

Also, if we define the subspace 
$$M(\underline I)=\oplus_{k=1}^n\Unitary(S_{i_{k}\dots i_pi_{1}\dots i_{k-1}}),$$
then 
$$M(\underline I)=\oplus_{k=0}^{n-1}P(\sigma^k(\underline I))\H,$$
and therefore we see, with Lemma \ref{lem2.7.1} that $M(\underline I)$ is invariant for $S_i^*$, $i\in\bz_N$.

With \eqref{eqpr2.4.3} we see that 
$$\alpha^n(P_{M(\underline I)})=\oplus \{P(w\underline I): |w|\leq n\}$$
and, with $Q(\underline I)=\vee_n\alpha^n(P_{M(\underline I)})$ we have, that $Q(\underline I)$ is invariant for the representation and  
$$Q(\underline I)=\oplus\{P(\omega) : \omega\in \Orbit(\underline I)\},$$
where $\Orbit(\underline I)$ is the orbit of $\underline I$ under $\sigma,\sigma^{-1}$, 
$$\operatorname*{Orbit}(\underline I)=\cup_{k,l\geq0}\sigma^{-k}\sigma^l(\underline I).$$

If we consider the spaces $\Unitary(S_i)$, $i\in\bz_N$ and then construct the space $\H_{unit}:=\oplus_{i\in\bz_N}Q(\underline i)$ then $\H_{unit}$ is invariant for the representation and if $\H_{shift}$ is its complement, then $\H_{shift}$ is also invariant for the representation and moreover the restrictions of the isometries $S_i$ to $\H_{shift}$ are pure isometries.

Similarly, if we take all finite words $I$ and construct the space $\H_{U}=\oplus\{M(\underline I): I\mbox{ finite word}\}$ and let $\H_{S}$ be its orthogonal complement, then $\H_{S}$ is invariant for the representation and all isometries $S_I$ are pure shifts on $\H_S$.
\end{remark}

\section{Atomic and permutative representations}
  In \cite{BrJo99}, the authors introduced a family of $\O_N$-representations which has since proved to be especially amenable to applications in discrete mathematics (see e.g., \cite{MR2121531, MR2318495}), as well as in physics \cite{MR2412296, MR1980913, MR2251300, MR2531317, MR2560855}. Below we study the relationship between these representations of $\O_N$ to the family of purely atomic representations introduced here.

   While there is no known classification of all representations of $\O_N$, the authors of \cite{BrJo99} identified a more amenable subclass of $\O_N$ representations, named permutative representations (Definition \ref{def1.1}, named thus because they permute the elements in some special orthogonal basis in the Hilbert space carrying the representation. Combining this with the Cuntz relations \eqref{eq2.0.1}, one gets a partition of the index set $I$ for the special ONB. What makes the permutative representations amenable to computations (classification and applications) is the existence of an encoding mapping $E$ from the index set $I$ into the space $\K_N$ of infinite words in the alphabet $\bz_N$. Properties of permutative $\O_N$ representations may thus be calculated from the associated encoding mappings, and vice versa. From the permutative property of such a representation, we get a mapping system on the index set $I$ for the ONB, $\sigma$ and $(\sigma_l)$, branches of the inverse for $\sigma$ (in $I$), so endomorphisms in $I$. Now the encoding mapping $E$ goes from $I$ to $\K_N$, and  $\K_N$ in turn carries its own shift maps, i.e., shift to the left, and shift to the right with insertion of a letter on the first place (see Definition \eqref{defcantor}. A key property of the encoding mapping $E$ (Proposition \ref{pr1.2}) is that it intertwines the respective mapping systems.   As a result the range of $E$, the subset $E(I)$ (in $\K_N$) is a non-sofic subshift (see e.g., \cite{Th04}); actually $E(I)$ is doubly invariant.

\begin{definition}\label{def1.2}
Let $(S_i)_{i\in\bz_N}$ be a representation of the Cuntz algebra $\O_N$ on a Hilbert space $\H$. We say that the representation is {\it purely atomic } if the associated projection-valued measure $P$ is purely atomic, i.e., there exist a subset $\Omega$ of $\K$ such that 
$$\oplus_{\omega\in\Omega}P(\omega)=I.$$
We also say that the representation is supported on $\Omega$. For a purely atomic representation, we call the set 
$$\supp P=\{\omega\in\K :P(\omega)\neq0\}$$
 {\it the support} of $P$.
\end{definition}

\begin{proposition}\label{pra.1}
Let $(S_i)_{i\in\bz_N}$ be a representation of $\O_N$ on the Hilbert space $\H$, with projection valued measure $P$. Define
\begin{equation}
A(\H):=\{\omega\in\K : P(\omega)\neq0\},\quad \H_{atomic}:=\oplus_{\omega\in A(\H)}P(\omega)\H,\quad\H_{cont}:=\H\ominus\H_{atomic}.
\label{eqa.1.1}
\end{equation}
Then the set $A(\omega)$ is invariant for the maps $\sigma$ and $\sigma_i$, $i\in\bz_N$; the spaces $\H_{atomic}$ and $\H_{cont}$ are invariant for the representation. The subsrepresentation of $\O_N$ on $\H_{atomic}$ is purely atomic, and the subrepresentation on $\H_{cont}$ has no atoms. 
\end{proposition}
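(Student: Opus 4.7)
The proof rests entirely on Lemma \ref{lem2.7.1} together with the fact that any subspace invariant for the full system $(S_i,S_i^*)_{i\in\bz_N}$ supports a subrepresentation whose associated projection valued measure is the restriction of $P$.

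For the invariance of $A(\H)$, I would invoke Lemma \ref{lem2.7.1} directly: since $S_i$ is unitary from $P(\omega)\H$ onto $P(i\omega)\H$, and $S_{\omega_1}^*$ is unitary from $P(\omega)\H$ onto $P(\sigma(\omega))\H$, the condition $\omega\in A(\H)$ (i.e.\ $P(\omega)\neq 0$) forces both $\sigma_i(\omega)=i\omega$ and $\sigma(\omega)$ to lie in $A(\H)$.

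Next I would verify that $\H_{atomic}$ reduces the representation. By the same lemma, for each $\omega\in A(\H)$ the image $S_iP(\omega)\H=P(i\omega)\H$ is a summand of $\H_{atomic}$, and $S_i^*P(\omega)\H$ is either $\{0\}$ or $P(\sigma(\omega))\H\subset\H_{atomic}$. Closing under linear span over $\omega\in A(\H)$ gives $S_i\H_{atomic}\subset\H_{atomic}$ and $S_i^*\H_{atomic}\subset\H_{atomic}$, so $\H_{atomic}$ is invariant for both the isometries and their adjoints; consequently its orthogonal complement $\H_{cont}$ enjoys the same property.

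Finally, let $P_a$ and $P_c$ denote the projections onto $\H_{atomic}$ and $\H_{cont}$. Since $P_a$ commutes with every $S_i$ and every $S_i^*$, it commutes with each cylinder projection $S_IS_I^*=P(\C(I))$ and, by the monotone class argument already invoked in Theorem \ref{th0.2}, with $P(B)$ for every Borel set $B$. Thus the projection valued measure of the subrepresentation carried by $\H_{atomic}$ (respectively $\H_{cont}$) is $B\mapsto P(B)P_a$ (respectively $B\mapsto P(B)P_c$), viewed on the reduced space. For $\omega\in A(\H)$ one has $P(\omega)\leq P_a$ by the very definition of $\H_{atomic}$, so $P(\omega)P_a=P(\omega)$ and $P(\omega)P_c=0$; for $\omega\notin A(\H)$ both products vanish. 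Summing over $\omega\in A(\H)$ yields $\sum_\omega P(\omega)=P_a$, which is the identity on $\H_{atomic}$, so that subrepresentation is purely atomic with support $A(\H)$, while the subrepresentation on $\H_{cont}$ has no atoms at all. The only step demanding care is the upgrade of the commutation $[P_a,P(\C(I))]=0$ from cylinders to the whole Borel PVM, which is a routine monotone class argument.
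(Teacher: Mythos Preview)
Your proof is correct and follows essentially the same route as the paper. The only cosmetic difference is that the paper packages the invariance of $\H_{atomic}$ by observing that $A(\H)=\sigma^{-1}(A(\H))$ and then citing Corollary~\ref{cor2.3}, whereas you verify the $S_i$- and $S_i^*$-invariance directly from Lemma~\ref{lem2.7.1}; the content is identical, and your explicit monotone class remark for the commutation $[P_a,P(B)]=0$ simply spells out what the paper takes for granted.
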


\begin{proof}
From Lemma \ref{lem2.7.1}, we see that $P(\omega)\H\neq 0$ implies that $P(\sigma_i(\omega))\H\neq0$ and $P(\sigma(\omega))\H\neq0$ so $A(\H)$ invariant under $\sigma_i$ and $\sigma$. Then, Corollary \ref{cor2.3}, implies that $\H_{atomic}$ is invariant for the representation and so $\H_{cont}$ is too. 

The projection valued measure associated to the subrepresentation on $\H_{atomic}$ is $P_a(A)=P(A)P_{\H_{atomic}}$ for all $A\in\B(\K)$. Therefore 
$$\oplus_{\omega\in A(\H)}P_a(\omega)=I_{\H_{atomic}},$$
so, this subrepresentation is purely atomic. 

If, by contradiction, the subrepresentation on $\H_{cont}$ has an atom, $\omega$, then $P(\omega)P_{\H_{cont}}\neq 0$ so $P(\omega)\neq0$. But then $\omega\in A(\H)$ so $P(\omega)\H\subset \H_{atomic}$ and therefore $P(\omega)P_{\H_{cont}}=0$, a contradiction. 

\end{proof}

\begin{proposition}\label{pr3.6}
Let $(S_i)_{i\in\bz_N}$ be a purely atomic representation of the Cuntz algebra $\O_N$ and let $P$ be the associated projection valued measure, with support $\Omega$. Decompose $\Omega$ into disjoint orbits under $\sigma$, $\sigma^{-1}$, i.e., $\operatorname*{Orbit}(\omega)=\cup_{k,l\geq0}\sigma^{-k}\sigma^l(\omega)$:
$$\Omega=\bigcup_\omega \operatorname*{Orbit}(\omega).$$
Then the representation splits into a direct sum of representations, more precisely, for each $\omega$ the projection $P(\operatorname*{Orbit}(\omega))$ is invariant for the representation and 
$$\oplus_{\omega}P(\operatorname*{Orbit}(\omega))=I.$$

\end{proposition}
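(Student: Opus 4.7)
The plan is to reduce the statement to Corollary~\ref{cor2.3}, which guarantees that $P(A)$ commutes with the representation whenever the Borel set $A$ satisfies $A=\sigma^{-1}(A)$. Accordingly, my first task will be to check this self-inverse-image property for each of the sets $\operatorname*{Orbit}(\omega)$; the invariance of its range will then be automatic, and the direct-sum decomposition will follow from countable additivity of $P$ on the disjoint orbits.

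To handle the first task, I would work with the explicit description of an orbit. Since $\sigma^l(\omega)$ is a single point in $\K$ and $\sigma^{-k}$ of a singleton $\{w\}$ is the set $\{Iw:|I|=k\}$, we have
\[
\operatorname*{Orbit}(\omega)=\{\,I\sigma^l(\omega) : I\text{ a finite word (possibly empty)},\ l\ge 0\,\}.
\]
From this description it is immediate that $\sigma$ and every $\sigma_i$ send $\operatorname*{Orbit}(\omega)$ into itself, and conversely that any $w$ with $\sigma(w)$ in the orbit must itself be in the orbit (write $\sigma(w)=I\sigma^l(\omega)$, so $w=w_1 I\sigma^l(\omega)$). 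These two observations give $\sigma^{-1}(\operatorname*{Orbit}(\omega))=\operatorname*{Orbit}(\omega)$. Since the orbit is a countable union of singletons, it is Borel in $\K$, so Corollary~\ref{cor2.3} applies and $P(\operatorname*{Orbit}(\omega))$ commutes with every $S_i$. In particular its range is invariant for the representation.

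For the decomposition, note that the relation ``$\omega'$ and $\omega''$ lie in a common $\operatorname*{Orbit}$'' is an equivalence relation on $\Omega$, so $\Omega$ splits as a disjoint union of orbits. Within each orbit the singleton atoms are pairwise orthogonal, so by countable additivity of the projection-valued measure
\[
P(\operatorname*{Orbit}(\omega))=\bigoplus_{\omega'\in\operatorname*{Orbit}(\omega)}P(\omega').
\]
Summing over the (pairwise disjoint) orbits and using that the representation is purely atomic with support $\Omega$ then yields
\[
\bigoplus_\omega P(\operatorname*{Orbit}(\omega))=P(\Omega)=\bigoplus_{\omega'\in\Omega}P(\omega')=I.
\]

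I do not anticipate any serious obstacle. The only delicate point is the short combinatorial verification that $\sigma^{-1}(\operatorname*{Orbit}(\omega))=\operatorname*{Orbit}(\omega)$; once this is in hand, the conclusion is just Corollary~\ref{cor2.3} together with countable additivity of $P$ on disjoint atoms.
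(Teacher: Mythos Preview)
Your proposal is correct and follows exactly the same approach as the paper: invoke Corollary~\ref{cor2.3} after checking $\sigma^{-1}(\operatorname*{Orbit}(\omega))=\operatorname*{Orbit}(\omega)$, and then read off the direct sum from pure atomicity. The paper's proof is in fact a two-line sketch of precisely this argument, so you have simply filled in the details it leaves implicit.
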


\begin{proof}
The fact that $P(\Orbit(\omega))$ is invariant for the representation is a consequence of Corollary \ref{cor2.3}. Everything else follows from this. 
\end{proof}
\begin{proposition}\label{pr2.10}
Let $(S_i)_{i\in\bz_N}$ be an irreducible representation of $\O_N$. Suppose the representation has an atom $\omega$. Then the representation is purely atomic and supported on $\Orbit(\omega)$.
\end{proposition}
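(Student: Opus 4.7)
The plan is to exhibit a nontrivial projection that commutes with the whole representation and then invoke irreducibility to conclude it equals $I$.

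First I would verify that $\Orbit(\omega) = \bigcup_{k,l\geq 0}\sigma^{-k}\sigma^l(\omega)$ is $\sigma^{-1}$-invariant, i.e.\ $\sigma^{-1}(\Orbit(\omega)) = \Orbit(\omega)$. The inclusion $\sigma^{-1}(\Orbit(\omega)) \subset \Orbit(\omega)$ is handled by simply increasing the exponent $k$; the reverse inclusion, equivalently $\sigma(\Orbit(\omega)) \subset \Orbit(\omega)$, is handled by decreasing $k$ if possible or else increasing $l$. Granted this, Corollary \ref{cor2.3} yields that $P(\Orbit(\omega))$ commutes with the representation.

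Next, because $\omega \in \Orbit(\omega)$, we have $P(\Orbit(\omega)) \geq P(\omega) \neq 0$, so irreducibility forces $P(\Orbit(\omega)) = I$. Since the orbit is countable and the singletons $\{\omega'\}$ are pairwise disjoint, countable additivity of the projection-valued measure $P$ gives
$$I \;=\; P(\Orbit(\omega)) \;=\; \bigoplus_{\omega' \in \Orbit(\omega)} P(\omega'),$$
which is precisely the statement that the representation is purely atomic with support contained in $\Orbit(\omega)$.

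To see that the support is exactly $\Orbit(\omega)$, and not a proper subset, I would appeal to Lemma \ref{lem2.7.1}: $S_i$ restricts to a unitary from $P(\omega')\H$ onto $P(i\omega')\H$, and $S_{\omega_1'}^*$ restricts to a unitary from $P(\omega')\H$ onto $P(\sigma(\omega'))\H$. Starting from $P(\omega) \neq 0$ and walking along $\sigma$ and $\sigma^{-1}$ steps within the orbit, a routine induction on the number of steps shows $P(\omega') \neq 0$ for every $\omega' \in \Orbit(\omega)$.

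The only step that requires a bit of attention is checking the two-sided $\sigma$-invariance of the orbit so that Corollary \ref{cor2.3} applies; once that is in hand, irreducibility does the real work and the rest is essentially bookkeeping with Lemma \ref{lem2.7.1} and countable additivity of $P$.
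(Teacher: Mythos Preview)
Your proof is correct and follows exactly the same route as the paper: show $\sigma^{-1}(\Orbit(\omega))=\Orbit(\omega)$, invoke Corollary~\ref{cor2.3} to get that $P(\Orbit(\omega))$ commutes with the representation, and use irreducibility together with $P(\omega)\neq 0$ to force $P(\Orbit(\omega))=I$. The paper's version is terser (it simply asserts the $\sigma^{-1}$-invariance and leaves the countable-additivity step implicit), and your final paragraph showing $P(\omega')\neq 0$ for every $\omega'\in\Orbit(\omega)$ is a correct addition but not strictly required by the statement, since in Definition~\ref{def1.2} ``supported on $\Omega$'' just means $\oplus_{\omega'\in\Omega}P(\omega')=I$.
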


\begin{proof}
Since $\sigma^{-1}(\Orbit(\omega))=\Orbit(\omega)$, it follows that $P(\Orbit(\omega))\neq0$ is invariant for the representation, and since this is irreducible, we get that $P(\Orbit(\omega))=I$ which implies that the representation is purely atomic. 

\end{proof}

\begin{proposition}\label{pr2.11}
Let $(S_i)_{i\in\bz_N}$ and $(\tilde S_i)_{i\in\bz_N}$ be two purely atomic representations of $\O_N$, with associated projection valued measures $P$ and $\tilde P$. Assume that the representations are supported on $\Orbit(\omega)$ and $\Orbit(\omega')$ respectively for two infinite words $\omega$ and $\omega'$. 

\begin{enumerate}
	\item If $\Orbit(\omega)\neq \Orbit(\omega')$ then the representations are disjoint. 
	\item If $\omega=\omega'$ and $\omega$ is not periodic, there is a linear isometric isomorphism between intertwiners $X:\H\rightarrow \tilde \H$ and operators $Y:P(\omega)\rightarrow \tilde P(\omega)$ defined by $Y=\tilde P(\omega)XP(\omega)$. 
	\item If $\omega=\omega'$ and $\omega=\underline I$ is a cycle, then there is a linear isometric isomorphism between intertwiners $X:\H\rightarrow \tilde \H$ and operators $Y:P(\omega)\rightarrow \tilde P(\omega)$ with the property 
	\begin{equation}
YS_I=\tilde S_IY\mbox{ on }P(\omega),
\label{eqpr2.11.1}
\end{equation}
	defined by $Y=\tilde P(\omega)XP(\omega)$. 
\end{enumerate}
\end{proposition}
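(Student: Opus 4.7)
For part (i), the hypothesis forces $P$ and $\tilde P$ to be supported on the disjoint Borel sets $\Orbit(\omega)$ and $\Orbit(\omega')$, so Proposition \ref{pr2.8} delivers disjointness at once.

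For parts (ii) and (iii), the forward map $X \mapsto Y := \tilde P(\omega) X P(\omega)$ is manifestly linear with $\|Y\|\leq\|X\|$. In case (iii), if $X$ is an intertwiner then $YS_I = \tilde S_I Y$ on $P(\omega)\H$ is automatic: $S_I$ preserves $P(\omega)\H$ (Remark \ref{rem3.5} identifies this atom with the unitary part of $S_I$), so $YS_I v = XS_I v = \tilde S_I Xv = \tilde S_I Yv$ for $v\in P(\omega)\H$. What really needs proof is therefore the inverse construction: given $Y$ (satisfying the cycle condition in (iii)), produce an intertwiner $X$ with $\tilde P(\omega) X P(\omega) = Y$ and $\|X\| = \|Y\|$.

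I will use the orbit decomposition $\H = \bigoplus_{\omega'' \in \Orbit(\omega)} P(\omega'')\H$ coming from pure atomicity. For $\omega'' = w\,\sigma^l(\omega)$ in $\Orbit(\omega)$, Lemma \ref{lem2.7.1} says $S_w (S_{\omega|l})^*$ is a unitary from $P(\omega)\H$ onto $P(\omega'')\H$, with inverse $S_{\omega|l} S_w^*$; the same holds on the $\tilde{}$ side. I then set
$$X v := \sum_{\omega'' \in \Orbit(\omega)} \tilde S_w (\tilde S_{\omega|l})^* \, Y \, S_{\omega|l} S_w^* \, v_{\omega''},\qquad v = \sum_{\omega''} v_{\omega''},\ v_{\omega''} \in P(\omega'')\H.$$
Granting well-definedness, unitarity of the outer and inner factors gives $\|Xv\|^2 = \sum_{\omega''} \|Y u_{\omega''}\|^2 \leq \|Y\|^2\,\|v\|^2$; restricting to $v\in P(\omega)\H$ recovers $Y$, so $\|X\|=\|Y\|$. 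Intertwining is a component-wise check: for $S_j$ on $P(\omega'')\H$, the parametrization of $j\omega''$ is $(jw, l)$ and the identity reduces to the Cuntz relations; for $S_j^*$ one separates $\omega''_1 = j$ from $\omega''_1 \neq j$, again invoking Lemma \ref{lem2.7.1}.

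The crux — and the only place where (ii) and (iii) diverge — is independence of $X$ from the choice of parametrization $(w, l)$ for a given $\omega''$. Any two parametrizations are connected by elementary moves: (a) $(w, l) \leftrightarrow (w\,\omega_{l+1},\, l+1)$, and, in the cyclic case only, (b) $(w, l) \leftrightarrow (w,\, l+p)$ with $p = |I|$. Move (a) leaves the formula invariant because the extra factor $S_{\omega_{l+1}} S_{\omega_{l+1}}^*$ acts as the identity on the sub-atom $P(\sigma^l(\omega))\H$ (and analogously on the $\tilde{}$ side), so for non-periodic $\omega$ the construction is automatically consistent, settling (ii). In case (iii), move (b) instead inserts an extra $S_I$ on the $Y$-input and an extra $\tilde S_I^*$ on its output; these cancel exactly when $YS_I = \tilde S_I Y$ on $P(\omega)\H$, which is precisely the hypothesis. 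Managing this cyclic ambiguity is the main obstacle; everything else is mechanical from the Cuntz relations and Lemma \ref{lem2.7.1}.
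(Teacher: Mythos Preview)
Your proof is correct and follows essentially the same route as the paper. The paper packages your well-definedness argument into a separate lemma (Lemma~\ref{lem2.12}), proving directly that $a\sigma^k(\omega)=a'\sigma^{k'}(\omega)$ forces $S_aS_{\omega|k}^*=S_{a'}S_{\omega|k'}^*$ on $P(\omega)$, while you decompose this into invariance under elementary moves (a) and (b); the content is the same, and your observation that $\omega|{l+p}=I(\omega|l)$ is exactly what makes move~(b) reduce to the hypothesis $YS_I=\tilde S_IY$. You are slightly more explicit than the paper on the norm equality $\|X\|=\|Y\|$, and neither you nor the paper spells out that the forward map $X\mapsto Y$ is injective (equivalently, that $\Psi\Phi=\mathrm{id}$), but this is immediate from the intertwining relations and Proposition~\ref{pr2.8}: any intertwiner $X$ satisfies $X|_{P(\omega'')\H}=\tilde S_w\tilde S_{\omega|l}^*\,(\tilde P(\omega)XP(\omega))\,S_{\omega|l}S_w^*$, so $X$ is determined by its restriction to $P(\omega)\H$.
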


\begin{proof}
Distinct orbits are disjoint so (i) follows from Proposition \ref{pr2.8}.

For (ii), we have to check that each operator $Y:P(\omega)\rightarrow \tilde P(\omega)$ induces an intertwiner $X$ such that $\tilde P(\omega)XP(\omega)=Y$. We need a lemma

\begin{lemma}\label{lem2.12}
Let $a,a'$ be two finite words, $k,k'\geq0$ and $\omega$ and infinite non-periodic word. If $a\sigma^k(\omega)=a'\sigma^{k'}(\omega)$ then $S_aS_{\omega|k}^*=S_{a'}S_{\omega|k'}^*$ on $P(\omega)$. 

\end{lemma}

\begin{proof}

So assume $a\sigma^k(\omega)=a'\sigma^{k'}(\omega)$ and in addition that the length of $a'$ is bigger than or equal to the length of $a$. Then $a'$ must be of the form $aa_0$ and $\sigma^k(\omega)=a_0\sigma^{k'}(\omega)$. Let $m$ be the length of $a_0$. We have $\sigma^{k+m}(\omega)=\sigma^{k'}(\omega)$. Since $\omega$ is non-periodic, we have $k'=k+m$. And therefore $a_0=\omega_{k+1}\dots\omega_{k+m}$. 

Then, on $P(\omega)$, we have (since the isometries are unitary between atoms, by Lemma \ref{lem2.7.1}): 
$$S_{a'}S_{\omega|k'}^*=S_aS_{a_0}S_{\omega_{k+m}}^*\dots S_{\omega_{k+1}}^*S_{\omega|k}^*=S_aS_{\omega|k}^*.$$
\end{proof}

Every point in the orbit of $\omega$ is of the form $a\sigma^k(\omega)$. Define $X$ from $P(a\sigma^k(\omega))$ to $\tilde P(a\sigma^k(\omega))$ by 
$$X=\tilde S_a\tilde S_{\omega|k}^*YS_{\omega|k}S_a^*.$$

Lemma \ref{lem2.12} shows that this operator is well defined, in the sense that it does not depend on the witting $a\sigma^k(\omega)$ for a point in the orbit of $\omega$. We check that $X$ intertwines.

Let $i \in \bz_N$, and $\phi \in P(a\sigma^k(\omega))$. Then $S_i \phi \in P(ia\sigma^k(\omega))$, so
$$ X S_i \phi =\tilde S_{ia}\tilde S_{\omega|k}^*YS_{\omega|k}S_{ia}^* S_i \phi = \tilde S_i \tilde S_a\tilde S_{\omega|k}^*YS_{\omega|k}S_a^* S_i^* S_i \phi = \tilde S_i X \phi .$$
Now consider $S_i^* \phi$. If $i$ is not the first letter of $a$, this is zero. Similarly, if $i$ is not the first letter of $a$ then $\tilde S_i^* \tilde S_a =0$, so it is enough to check that $\tilde S_i^* X = X S_i^*$ when $i$ is the first letter of $a$. In this case, $a=ia'$ and  $S_i^* \phi \in P(a' \sigma^k(\omega))$, so we have
$$ X S_i^* \phi = \tilde S_{a'}\tilde S_{\omega|k}^*YS_{\omega|k}S_{a'}^* S_i^* \phi  = \tilde S_{a'}\tilde S_{\omega|k}^*YS_{\omega|k}S_a^*  \phi = \tilde S_i^* X \phi .$$
We now have that $X$ is an intertwiner.

For (iii) , the proof is similar to that for (ii), but note that in this case 

$$\tilde S_I\tilde P(\omega)XP(\omega)=\tilde P(I\omega)\tilde S_IXP(\omega)=\tilde P(\omega)XS_IP(\omega)=\tilde P(\omega)XP(\omega)S_I.$$
Conversely, if $Y$ satisfies this intertwining relation, then $X$ defined as above, in the proof of (ii), intertwines the representations. Note that, since $S_I$ and $\tilde S_I$ are unitary on $P(\omega)$ and $\tilde P(\omega)$ respectively, by the Fuglede-Putnam theorem \cite{MR0077905}, the operator $Y$ also intertwines $S_I^*$ and $\tilde S_I^*$.

\end{proof}

\begin{corollary}\label{cor2.13}
Let $(S_i)_{i\in\bz_N}$ be a purely atomic representation of $\O_N$ with associated projection valued measure $P$. Suppose $P$ is supported on $\Orbit(\omega)$.
Then the representation is irreducible if and only if $\dim P(\omega)=1$. 
\end{corollary}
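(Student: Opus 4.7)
The plan is to deduce the corollary from Proposition \ref{pr2.11} applied to the intertwiner space of the representation with itself, together with Schur's lemma: the representation is irreducible if and only if its commutant $\{S_i : i\in\bz_N\}'$ equals $\bc I$. So the strategy is to identify this commutant explicitly using Proposition \ref{pr2.11}, and then read off when it is one-dimensional.

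I split into two cases according to whether $\omega$ is non-periodic or cyclic. In the non-periodic case, Proposition \ref{pr2.11}(ii) with $(\tilde S_i)=(S_i)$ gives a linear isometric isomorphism between the commutant of the representation and the full operator algebra $B(P(\omega)\H)$, via $X\mapsto Y=P(\omega)XP(\omega)$. Thus the commutant equals $\bc I$ if and only if $B(P(\omega)\H)=\bc$, i.e.\ $\dim P(\omega)=1$.

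In the cyclic case $\omega=\underline I$, Proposition \ref{pr2.11}(iii) identifies the commutant with the set of $Y\in B(P(\omega)\H)$ satisfying $YS_I=S_IY$ on $P(\omega)\H$. By Lemma \ref{lem2.7.1} (or Remark \ref{rem3.5}), $S_I$ restricted to $P(\omega)\H$ is a unitary operator $U$, so this set is the commutant $\{U\}'\subset B(P(\omega)\H)$. If $\dim P(\omega)=1$ this is trivially $\bc I$, so the representation is irreducible. Conversely, if $\dim P(\omega)\geq 2$, the spectral theorem forces $\{U\}'\supsetneq \bc I$: either $U$ is a scalar, in which case $\{U\}'=B(P(\omega)\H)$ has dimension at least $4$; or $U$ has at least two distinct spectral values, in which case its nontrivial spectral projections lie in $\{U\}'$. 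Either way the representation admits a nontrivial self-intertwiner and thus fails to be irreducible.

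The main subtlety is the backward direction in the cyclic case, where one must rule out the possibility that a unitary on a space of dimension $\geq 2$ has trivial commutant; the spectral theorem argument above handles it cleanly. Everything else is a direct translation through the bijections furnished by Proposition \ref{pr2.11}.
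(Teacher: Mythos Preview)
Your proof is correct and follows essentially the same approach as the paper. The paper's argument is terser but identical in substance: it invokes Proposition \ref{pr2.11} (implicitly, since the corollary sits right after it) to identify the commutant with $B(P(\omega)\H)$ in the non-periodic case and with the commutant of the unitary $S_I$ on $P(\omega)\H$ in the cyclic case, and then observes that the latter is nontrivial whenever $\dim P(\omega)>1$. Your spectral-theorem justification for that last step just spells out what the paper leaves as ``there are non-trivial operators which commute with the unitary $S_I$.''
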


\begin{proof}
If $\omega$ is non-periodic, then the commutant is isomorphic to the space of bounded operators on $P(\omega)$, so it is one-dimensional iff $P(\omega)$ is. 
If $\omega=\underline I$ is a cycle and $\dim P(\omega)>1$ then there are non-trivial operators which commute with the unitary $S_I$ on $P(\omega)$. So the commutant is trivial iff $\dim P(\omega)=1$. 
\end{proof}

\begin{corollary}\label{cor2.14}
Let $(S_i)_{i\in\bz_N}$, $(\tilde S_i)_{i\in\bz_N}$ be two purely atomic representations of $\O_N$ on the Hilbert spaces $\H$, $\tilde H$ with projection valued measure $P$ and $\tilde P$. The two representations are unitarily equivalent if and only if the following conditions are satisfied:
\begin{enumerate}
	\item $\supp P=\supp {\tilde P}=:\Omega$;
	\item For every $\omega\in\Omega$, $\dim P(\omega)=\dim \tilde P(\omega)$;
	\item If $\omega\in \Omega$ is a cycle $\omega=\underline I$, then there exists a unitary map $U_\omega:P(\omega)\rightarrow \tilde P(\omega)$ such that $U_\omega S_I=\tilde S_IU_\omega$ on $P(\omega)$. 
\end{enumerate}

\end{corollary}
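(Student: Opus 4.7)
The plan is to prove the two implications separately, extracting the necessary direction directly from Proposition \ref{pr2.8} and building the sufficient direction orbit-by-orbit using Propositions \ref{pr3.6} and \ref{pr2.11}.

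For necessity, suppose $U\colon \H\to \tilde\H$ is a unitary intertwiner. By Proposition \ref{pr2.8}, $U$ intertwines the two projection-valued measures, so $UP(A)=\tilde P(A)U$ for every Borel set $A\subset \K_N$. Taking $A=\{\omega\}$, the operator $U$ restricts to a unitary isomorphism $P(\omega)\H\to \tilde P(\omega)\tilde\H$. This forces $\supp P=\supp\tilde P$ and $\dim P(\omega)=\dim\tilde P(\omega)$ for every $\omega$, giving (i) and (ii). For a cyclic atom $\omega=\underline I$, define $U_\omega$ to be the restriction of $U$ to $P(\omega)\H$, which is unitary onto $\tilde P(\omega)\tilde\H$; intertwining of $S_I$ with $\tilde S_I$ specializes, on $P(\omega)\H$, to the relation in (iii).

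For sufficiency, assume (i)--(iii) hold and write $\Omega=\bigcup_\omega \Orbit(\omega)$ as a disjoint union of $\sigma,\sigma^{-1}$-orbits. By Proposition \ref{pr3.6}, the subspaces $P(\Orbit(\omega))\H$ and $\tilde P(\Orbit(\omega))\tilde\H$ reduce the two representations, and by (i) the orbits appearing in both decompositions are the same. So it suffices to construct, for each orbit, a unitary intertwiner between the corresponding orbit pieces, and then take the direct sum. Fix an orbit and a representative $\omega$. By (ii), pick any unitary $Y\colon P(\omega)\H\to \tilde P(\omega)\tilde\H$ (for a non-periodic $\omega$), or let $Y=U_\omega$ from (iii) (for $\omega=\underline I$). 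Proposition \ref{pr2.11}(ii) or (iii) produces an intertwiner $X$ from $P(\Orbit(\omega))\H$ to $\tilde P(\Orbit(\omega))\tilde\H$.

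The main step is then to verify that this $X$ is actually unitary, not merely an intertwiner. Here I would use the explicit formula from the proof of Proposition \ref{pr2.11}: on each atom $P(a\sigma^k(\omega))\H$, the intertwiner is given by $X=\tilde S_a\tilde S_{\omega|k}^* Y S_{\omega|k}S_a^*$. By Lemma \ref{lem2.7.1}, the operators $S_{\omega|k}$ and $S_a$ are unitary between the relevant atoms, and similarly on the tilde side; since $Y$ is unitary by construction, $X$ restricts to a unitary from $P(a\sigma^k(\omega))\H$ onto $\tilde P(a\sigma^k(\omega))\tilde\H$. Because the representation is purely atomic and the orbit subspaces split as orthogonal sums of the atoms in the orbit, assembling these atomwise unitaries gives a unitary on the orbit piece. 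Summing over all orbits yields the required global unitary equivalence.

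The main obstacle I anticipate is bookkeeping in the unitarity check: one must be careful that the formula for $X$ really does send $P(a\sigma^k(\omega))\H$ \emph{onto} $\tilde P(a\sigma^k(\omega))\tilde\H$ (not merely into it), and that the direct-sum decomposition of the orbit subspace is compatible with the atom-by-atom formula. Once this is verified, condition (iii) is seen to be exactly what is needed in the cyclic case to guarantee that an arbitrary choice of unitary $Y$ between $P(\omega)\H$ and $\tilde P(\omega)\tilde\H$ may not suffice, since the unitary restrictions of $S_I$ and $\tilde S_I$ to these atoms need not be conjugate without the hypothesis $U_\omega S_I=\tilde S_I U_\omega$.
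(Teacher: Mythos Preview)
Your proposal is correct and follows essentially the same approach as the paper: necessity via the intertwining relation $UP(A)=\tilde P(A)U$ from Proposition~\ref{pr2.8}, and sufficiency by decomposing into orbits (Proposition~\ref{pr3.6}) and invoking Proposition~\ref{pr2.11}(ii) or (iii) on each orbit with a suitable unitary $Y$. Your write-up is in fact slightly more careful than the paper's, which simply asserts that the resulting intertwiner $X$ is unitary; you correctly supply the missing justification via Lemma~\ref{lem2.7.1} and the explicit atomwise formula for $X$.
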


\begin{proof}
If $U:\H\rightarrow\tilde \H$ is a unitary operator that intertwines the representations then 
$$\tilde P(A)=UP(A)U^*$$
for any Borel subset of $\K$. (i)--(iii) follow from directly from this.

Conversely, decompose $\Omega$ into orbits
$$I_\H=\oplus P(\Orbit(\omega)),\quad I_{\tilde\H}=\oplus \tilde P(\Orbit(\omega)).$$
We will define the intertwining unitary on each component; therefore we can assume $\Omega=\Orbit(\omega)$ for some $\omega\in\K$. 

If $\omega$ is not periodic, then take any unitary $Y:P(\omega)\rightarrow \tilde P(\omega)$, which exists by (ii), and, by Proposition \ref{pr2.11} (ii), we get an intertwining unitary $X$ from $\H$ to $\tilde\H$ such that $\tilde P(\omega)XP(\omega)=Y$. 

If $\omega$ is a cycle then we use Proposition \ref{pr2.11}(iii) with $Y=U_\omega$ as in (iii) and we obtain an intertwining unitary $X:\H\rightarrow\tilde\H$. 

\end{proof}

We recall some fact about permutative representations, form \cite{BrJo99}.

\begin{definition}\label{def1.1}
A representation $(S_l)_{i\in\bz_N}$ of the Cuntz algebra $\mathcal O_N$ on a Hilbert space $\H$ is called {\it permutative} if there exists an orthonormal basis $\{e_i : i\in I\}$ such that for all $i\in I$, $j\in \bz_N$, the vector $S_je_i$ is an element of this basis. This defines the {\it branching maps} $\sigma_j:I\rightarrow I$ by

\begin{equation}
S_je_i=e_{\sigma_j(i)},\quad(j\in\bz_N,i\in I).
\label{eq1.1.1}
\end{equation}
\end{definition}

\begin{proposition}\label{pr1.2}\cite{BrJo99}
The maps $\sigma_l$ are one-to-one and 
\begin{equation}
\bigcup_{j\in\bz_N}\sigma_j(I)=I,
\label{eq1.2.1}
\end{equation}
\begin{equation}
\sigma_j(I)\cap\sigma_{j'}(I)=\ty,\quad(j\neq j').
\label{eq1.2.2}
\end{equation}

Also
\begin{equation}
S_j^*e_{\sigma_j(i)}=e_i,\quad S_{j'}^*e_{\sigma_j(i)}=0\mbox{ for }j\neq j'.
\label{eq1.2.3}
\end{equation}
For each $i\in I$ there exits uniquely $j_0\in L$ and $i_1\in I$ such that $i=\sigma_j(i_1)$. We denote $\sigma(i)=i_1$, so $\sigma_j(\sigma(i))=i$ for all $i\in\sigma_j(I)$ and $\sigma(\sigma_j(i))=i$ for all $i\in I$. The map $\sigma:I\rightarrow I$ is $N$-to-1. (See also Remark \ref{rem4.20}).

Define the {\it coding map} $E: I\rightarrow \K_N $ by
\begin{equation}
E(i)=j_0j_1\dots, \mbox{ where }\sigma^k(i)\in\sigma_{j_k}(I),\mbox{ for all } k\in\bn.
\label{eq1.2.4}
\end{equation}

 Then: the set $E(I)$ is invariant for the maps $\sigma_l$ and for $\sigma$; the following intertwining relations hold:  

\begin{equation}
\sigma_l\circ E=E\circ\sigma_l,\quad \sigma\circ E=E\circ\sigma.
\label{eq1.2.5}
\end{equation}

Thus, if the coding map is one-to-one, then the set $I$ can be identified with an invariant subset of words $E(I)$ in $\K_N$ and the maps $\sigma_l$ and $\sigma$ can be identified with the corresponding shifts on this set.
\end{proposition}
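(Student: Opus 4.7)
Everything flows from the Cuntz relations \eqref{eq2.0.1} applied to the defining identity $S_je_i=e_{\sigma_j(i)}$. The plan is, first, to establish the properties of the branching maps $\sigma_j$ together with \eqref{eq1.2.3}; second, to use the resulting partition $I=\bigsqcup_{j\in\bz_N}\sigma_j(I)$ to define the map $\sigma$; and third, to verify that the coding map $E$ is well-defined and intertwines the two dynamical systems.

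For the properties of the $\sigma_j$, injectivity comes from applying $S_j^*S_j=I$ to the equality $S_je_i=S_je_{i'}$ whenever $\sigma_j(i)=\sigma_j(i')$. The disjointness \eqref{eq1.2.2} follows because, assuming $\sigma_j(i)=\sigma_{j'}(i')$ with $j\neq j'$, one has $S_je_i=S_{j'}e_{i'}$, and applying $S_j^*$ kills the right side via $S_j^*S_{j'}=0$ while leaving $e_i$ on the left. The same Cuntz orthogonality at once yields \eqref{eq1.2.3}.

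The substantive step is the covering property \eqref{eq1.2.1}. For a given $i\in I$, I would apply $\sum_{j\in\bz_N}S_jS_j^*=I$ to $e_i$ and expand each $S_j^*e_i=\sum_kc_{j,k}e_k$ in the given ONB, which by continuity of $S_j$ produces
\[
e_i=\sum_{j,k}c_{j,k}\,e_{\sigma_j(k)}.
\]
By injectivity of each $\sigma_j$ together with the (already established) disjointness \eqref{eq1.2.2}, the joint map $(j,k)\mapsto\sigma_j(k)$ is itself injective, so the $e_{\sigma_j(k)}$ appearing on the right are pairwise distinct ONB elements. Uniqueness of the ONB expansion of $e_i$ then forces exactly one pair with $\sigma_j(k)=i$, placing $i$ in $\bigcup_j\sigma_j(I)$.

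With the partition of $I$ into the pieces $\sigma_j(I)$ in hand, $\sigma$ is forced: each $i\in I$ has a unique pair $(j_0,i_1)$ with $i=\sigma_{j_0}(i_1)$, and one sets $\sigma(i):=i_1$. The identities $\sigma\circ\sigma_j=\mathrm{id}$ on $I$ and $\sigma_j\circ\sigma=\mathrm{id}$ on $\sigma_j(I)$ are tautological, and $\sigma^{-1}(i)=\{\sigma_j(i):j\in\bz_N\}$ has exactly $N$ elements by the injectivity of the $\sigma_j$. The coding map $E$ is well-defined because each $\sigma^k(i)$ lies in precisely one $\sigma_{j_k}(I)$, and the intertwining identities \eqref{eq1.2.5} reduce to letter-by-letter comparisons: the $k$-th letter of $E(\sigma(i))$ is the $(k+1)$-st letter of $E(i)$ because $\sigma^k(\sigma(i))=\sigma^{k+1}(i)$, while the first letter of $E(\sigma_l(i))$ is $l$ (as $\sigma_l(i)\in\sigma_l(I)$) and all subsequent letters agree with $E(i)$ since $\sigma(\sigma_l(i))=i$. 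Invariance of $E(I)$ under both $\sigma$ and the $\sigma_l$ drops out of these intertwinings. The only step going beyond bookkeeping is \eqref{eq1.2.1}, whose one genuine input is the injectivity of the joint map $(j,k)\mapsto\sigma_j(k)$.
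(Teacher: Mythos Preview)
The paper does not include a proof of this proposition; it is quoted from \cite{BrJo99} without argument. Your proof is correct and is the natural one: every claim is a direct consequence of the Cuntz relations applied to the defining identity $S_je_i=e_{\sigma_j(i)}$, with the only step requiring care being the covering \eqref{eq1.2.1}, which you handle correctly by expanding $e_i=\sum_{j}S_jS_j^*e_i$ in the ONB and invoking the injectivity of the joint map $(j,k)\mapsto\sigma_j(k)$.
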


\begin{remark}\label{rem4.20}
Note that we have to systems of maps $\sigma, \sigma_l$, one on the index set $I$ and one on the infinite words $\K_N$. We use the same letters for both of them to simplify the notation, but the reader should be aware of the difference. 
\end{remark}

\begin{theorem}\label{th1.2}
Let $(S_i)_{i=0}^{N-1}$ be a permutative representation of the Cuntz algebra $\O_N$ on a Hilbert space $\H$ with orthonormal basis $\{e_\lambda : \lambda\in\Lambda\}$ and encoding mapping $E$. Then the representation is purely atomic and supported on $E(\Lambda)$. Moreover, for $\omega\in E(\Lambda)$, $P(\omega)$ is the projection onto the closed span of the vectors $e_\lambda$, $\lambda\in E^{-1}(\omega)$. 
\end{theorem}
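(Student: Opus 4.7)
The plan is to identify the cylinder projections $P(\C(I))=S_IS_I^*$ explicitly in terms of the given orthonormal basis, and then to pass to point masses by continuity of the projection valued measure. Iterating the formulas $S_je_i=e_{\sigma_j(i)}$ and $S_{j'}^*e_{\sigma_j(i)}=\delta_{j,j'}\,e_i$ from Proposition~\ref{pr1.2}, one checks that for a finite word $I$ of length $n$ and any $\lambda\in\Lambda$, $S_I^*e_\lambda$ equals $e_{\sigma^n(\lambda)}$ if $\lambda\in\sigma_I(\Lambda)$ and $0$ otherwise; consequently $S_IS_I^*$ acts as the identity on each $e_\lambda$ with $\lambda\in\sigma_I(\Lambda)$ and annihilates the remaining basis vectors. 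The intertwining relation $\sigma_l\circ E=E\circ\sigma_l$ from \eqref{eq1.2.5}, iterated along $I$, then gives $\sigma_I(\Lambda)=E^{-1}(\C(I))$, so $P(\C(I))$ is precisely the projection onto $\Span\{e_\lambda : E(\lambda)\in\C(I)\}$.

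Next I would pass from cylinders to singletons. Since $\{\omega\}=\bigcap_k\C(\omega|k)$ is a decreasing intersection and $P$ is $\sigma$-additive, the projections $P(\C(\omega|k))$ decrease strongly to $P(\{\omega\})$. For $\omega\in E(\Lambda)$ and any $\lambda\in E^{-1}(\omega)$, the previous step puts $e_\lambda\in P(\C(\omega|k))\H$ for every $k$, hence $e_\lambda\in P(\{\omega\})\H$. Setting $\V(\omega):=\Span\{e_\lambda : \lambda\in E^{-1}(\omega)\}$, I therefore obtain the inclusion $\V(\omega)\subseteq P(\omega)\H$ for every $\omega\in E(\Lambda)$.

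To finish, I exploit that $\Lambda$ partitions as $\bigsqcup_{\omega\in E(\Lambda)}E^{-1}(\omega)$, so that $\H=\bigoplus_{\omega\in E(\Lambda)}\V(\omega)$. Distinct atoms of the projection valued measure give mutually orthogonal projections, so combining this orthogonal decomposition with the inclusions $\V(\omega)\subseteq P(\omega)\H$ forces equality $\V(\omega)=P(\omega)\H$ for each $\omega\in E(\Lambda)$, as well as $\bigoplus_{\omega\in E(\Lambda)}P(\{\omega\})=I$ and $P(\{\omega_0\})=0$ for any $\omega_0\notin E(\Lambda)$ (the range of such a projection would be orthogonal to all of $\H$). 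The only step with analytic content is the passage from cylinder projections to point-mass projections by countable additivity; everything else is bookkeeping with the branching combinatorics and with the intertwining properties of the coding map $E$.
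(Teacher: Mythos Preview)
Your proof is correct and follows essentially the same route as the paper's: show $e_\lambda\in P(E(\lambda))\H$ by trapping $e_\lambda$ in every cylinder projection $P(\C(E(\lambda)|k))$ and passing to the limit, then use the orthogonality of distinct atoms together with the fact that $\{e_\lambda\}$ is an orthonormal basis to force $P(\omega)\H=\V(\omega)$ and $\bigoplus_{\omega\in E(\Lambda)}P(\omega)=I$. Your explicit identification of $P(\C(I))$ with the span of $\{e_\lambda:E(\lambda)\in\C(I)\}$ is a slight elaboration the paper skips, but it is correct and the rest matches the paper's argument.
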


\begin{proof}
It is enough to prove the last statement, because then
$$\oplus_{\omega\in E(\Lambda)}P(\omega)=\oplus_{\omega\in E(\Lambda)}\oplus_{\lambda\in E^{-1}(\omega)}|e_\lambda\rangle\langle e_\lambda|=I,$$
since $\{e_\lambda\}$ is an orthonormal basis.

For the last statement, take $\omega\in E(\Lambda)$, $\omega=E(\lambda)$. Note that, by the definition of $E(\lambda)$, we have that for all $n$ : $e_\lambda\in S_{\omega_1}\dots S_{\omega_n}\H=P(\C(\omega|n))\H$. The intersection of the cylinders $\C(\omega|n)$ is $\{\omega\}$ so $e_\lambda\in P(\omega)\H$. 

Now, take $\lambda'\not\in E^{-1}(\omega)$, so $E(\lambda')=\omega'\neq \omega$. Then $e_\lambda\in P(\omega')\H \perp P(\omega)\H$. These relations imply that $P(\omega)$ is indeed the projection onto the span of $e_\lambda$, $\lambda\in E^{-1}(\omega)$.

\end{proof}

\begin{theorem}\label{th1.4}
Let $(S_i)_{i\in\bz_N}$ be a purely atomic representation of the Cuntz algebra $\O_N$. If the representation is supported on an aperiodic set, then the representation is permutative. Moreover the representation can be decomposed into a direct sum of permutative representations with injective coding map.
\end{theorem}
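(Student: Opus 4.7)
The plan is to decompose the representation into $\sigma$-orbits via Proposition~\ref{pr3.6}, and on each orbit to transport an orthonormal basis of a single atom $P(\omega_0)\H$ throughout the orbit using the Cuntz isometries, producing a basis that is permuted by the $S_i$. Aperiodicity of $\Omega$ passes to every orbit (a periodic element in an orbit would, after applying sufficiently many $\sigma$'s, force $\omega_0$ itself to be periodic), so by Proposition~\ref{pr3.6} we may assume the representation is supported on a single orbit $\Orbit(\omega_0)$ with $\omega_0$ non-periodic, and it suffices to build a permutative structure in this case.

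Fix an orthonormal basis $\{f_\lambda\}_{\lambda \in \Lambda}$ of $P(\omega_0)\H$. For each $\omega' \in \Orbit(\omega_0)$, write $\omega' = a\sigma^k(\omega_0)$ for some finite word $a$ and $k \geq 0$, and define
\[
e_\lambda^{\omega'} := S_a S_{\omega_0|k}^* f_\lambda.
\]
Lemma~\ref{lem2.12} (where aperiodicity of $\omega_0$ is essential) ensures $e_\lambda^{\omega'}$ is independent of the chosen factorization of $\omega'$, and Lemma~\ref{lem2.7.1} says $S_a S_{\omega_0|k}^*$ is a unitary from $P(\omega_0)\H$ onto $P(\omega')\H$. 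Thus $\{e_\lambda^{\omega'}\}_{\lambda \in \Lambda}$ is an orthonormal basis of $P(\omega')\H$, and combining over $\omega' \in \Orbit(\omega_0)$ yields an orthonormal basis of $\H$. The direct computation $S_i e_\lambda^{\omega'} = S_{ia} S_{\omega_0|k}^* f_\lambda = e_\lambda^{i\omega'}$ shows the $S_i$ permute this basis, so the representation is permutative.

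For the decomposition into permutative subrepresentations with injective coding, fix $\lambda \in \Lambda$ and set $\H_\lambda := \Span\{e_\lambda^{\omega'} : \omega' \in \Orbit(\omega_0)\}$. The formula above for $S_i$ preserves the $\lambda$-label, and the analogous calculation shows $S_i^* e_\lambda^{\omega'}$ equals $e_\lambda^{\sigma(\omega')}$ when $\omega_1' = i$ and vanishes otherwise, so $\H_\lambda$ is invariant. On $\H_\lambda$ the index set identifies with $\Orbit(\omega_0)$, the branching maps are $\omega' \mapsto i\omega'$, and the coding map of Proposition~\ref{pr1.2} reduces to the identity on $\Orbit(\omega_0)$, which is injective. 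Assembling over all orbits and all $\lambda$ gives the asserted decomposition $\H = \bigoplus_\lambda \H_\lambda$. The main obstacle is the well-definedness of $e_\lambda^{\omega'}$: without aperiodicity, two different factorizations of the same $\omega'$ could produce different vectors (as the periodic setting of Example~\ref{ex3.5} and Theorem~\ref{th2.6} signals), so the whole construction collapses—this is precisely where the aperiodicity hypothesis enters, through Lemma~\ref{lem2.12}.
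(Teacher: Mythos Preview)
Your proposal is correct and follows essentially the same route as the paper: reduce to a single orbit via Proposition~\ref{pr3.6}, transport an orthonormal basis of $P(\omega_0)\H$ through the orbit by $S_aS_{\omega_0|k}^*$, invoke Lemma~\ref{lem2.12} (using non-periodicity) for well-definedness and Lemma~\ref{lem2.7.1} for unitarity, and then split along the index $\lambda$ to obtain subrepresentations with injective coding. The only cosmetic difference is that you spell out explicitly why aperiodicity passes to each orbit and why the coding map is the identity, which the paper leaves implicit.
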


\begin{proof}
Let $\Omega$ be the support of $P$. By Proposition \ref{pr3.6}, we can decompose $\Omega$ into orbits and each orbit will give a subrepresentation. Thus, by taking direct sums, it is enough to consider the case when $\Omega$ is the orbit of some non-periodic word $\omega$ in $\K$. 

 Let $\{e_i\}_{i\in I}$ be some orthonormal basis in $P(\omega)\H$.

Every point in $\Omega$ is of the form $a\sigma^k(\omega)$ for some finite word $a$ and some non-negative integer $k$.

Define the orthonormal basis in $P(\{a\sigma^k(\omega)\})\H$ by $S_aS_{\omega|k}^*e_i$, $i\in I$. The relations in \eqref{eq1.4.2} show that this is indeed an orthonormal basis for $P(\{a\sigma^k(\omega)\})\H$. Lemma \ref{lem2.12} shows that this does not depend on the choice of the way we write $a\sigma^k(\omega)$. 

Since $P$ is supported on $\Omega$, we have $\oplus_{a,k} P(\{a\sigma^k(\omega)\})=I$, so the union of these orthonormal bases form an orthonormal basis for $\H$. We have to check only that the Cuntz isometries map the basis into itself. But this is clear from the definition. 

To decompose the representation into a direct sum of permutative representations with injective coding maps, consider, for each vector $e_i\in O$, the space 
$$\H_i=\cj{\operatorname*{span}}\{S_aS_{\omega|k}^*e_i : k\geq 0, a\mbox{ finite word}\}.$$
It is easy to see that the direct sum of the spaces $\H_i$ is $\H$, they are invariant for the representation, and the restriction of the representation has a permutative orthonormal basis $\{S_aS_{\omega|k}^*e_i: k\geq 0, a\mbox{ finite word}\}$. The coding map is $E(S_aS_{\sigma^k(\omega)}^*e_i)=a\sigma^k(\omega)$ and it is injective. 
\end{proof}

\begin{theorem}\label{th1.5}
Let $(S_i)_{i\in\bz_N}$ be a purely atomic representation of the Cuntz algebra $\O_N$ on a Hilbert space $\H$. Suppose the representation is supported on a set $\Omega$ that contains only periodic points. The following statements are equivalent:
\begin{enumerate}
	\item The representation is permutative. 
	\item For every cycle $\omega=\underline{\omega_1\dots\omega_p}$ in the support $\Omega$, there exists an orthonormal basis $O_\omega=\{e_i : i\in I_\omega\}$ for $P(\omega)\H$ such that $S_{\omega_1}\dots S_{\omega_p}O_\omega=O_\omega$. 
\end{enumerate}

\end{theorem}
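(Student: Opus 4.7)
The plan is to prove the two implications separately, with the reverse direction being the substantive one.

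For (i) $\Rightarrow$ (ii), let $\{e_\lambda : \lambda \in \Lambda\}$ be a permutative basis with encoding map $E$, and write $I = \omega_1\dots\omega_p$. Theorem \ref{th1.2} identifies $P(\omega)\H$ with $\Span\{e_\lambda : E(\lambda) = \omega\}$, so the natural choice is $O_\omega := \{e_\lambda : \lambda \in E^{-1}(\omega)\}$. The identity $E(S_I e_\lambda) = I\cdot E(\lambda) = I\omega = \omega$, combined with the permutative property, shows $S_I(O_\omega) \subset O_\omega$; since $S_I$ is unitary on $P(\omega)\H$ by Lemma \ref{lem2.7.1}, the image $S_I(O_\omega)$ is already an orthonormal basis of $P(\omega)\H$ contained in $O_\omega$, and maximality forces equality.

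For (ii) $\Rightarrow$ (i), I would first reduce via Proposition \ref{pr3.6} to the case $\Omega = \Orbit(\omega)$ for a single cyclic word $\omega = \underline{\omega_1\dots\omega_p}$ (with $p$ minimal) and set $I = \omega_1\dots\omega_p$. For $j \in \{0,1,\dots,p-1\}$ let $\omega^{(j)} := \sigma^j(\omega)$ and define $O_{\omega^{(j)}} := S_{\omega|j}^*\, O_\omega$; by Lemma \ref{lem2.7.1} this is an orthonormal basis of $P(\omega^{(j)})\H$. For an arbitrary orbit point written as $\eta = b\,\omega^{(j)}$ (with $b$ a finite, possibly empty, word), declare $O_\eta := S_b\, O_{\omega^{(j)}}$. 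Granted well-definedness, the disjoint union $\bigcup_{\eta \in \Orbit(\omega)} O_\eta$ is an orthonormal basis of $\H$ by mutual orthogonality of the atoms and the purely atomic hypothesis, and the very construction gives $S_i O_\eta = O_{i\eta}$, exhibiting the basis as permuted by each Cuntz isometry.

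The main obstacle is the well-definedness of $O_\eta$. Two writings of the same orbit point $\eta$ differ by a chain of elementary moves $b\,\omega^{(j)} \leftrightarrow (b\omega_{j+1})\,\omega^{(j+1 \bmod p)}$, and after cancelling $S_b$ each such move reduces to the identity $O_{\omega^{(j)}} = S_{\omega_{j+1}}\, O_{\omega^{(j+1 \bmod p)}}$. When $j+1 < p$, this follows from the factorization $S_{\omega|(j+1)}^* = S_{\omega_{j+1}}^*\, S_{\omega|j}^*$ together with $S_{\omega_{j+1}} S_{\omega_{j+1}}^*\, O_{\omega^{(j)}} = O_{\omega^{(j)}}$, since $O_{\omega^{(j)}} \subset P(\C(\omega_{j+1}))\H$. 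The delicate case is the wrap-around $j = p-1$: multiplying the required equality $S_{\omega|(p-1)}^* O_\omega = S_{\omega_p}\, O_\omega$ on the left by the isometry $S_{\omega|(p-1)}$ and applying $P(\C(\omega|(p-1)))\, O_\omega = O_\omega$ collapses the check to $S_{\omega|p}\, O_\omega = S_I\, O_\omega = O_\omega$, which is exactly hypothesis (ii). This wrap-around is where (ii) is indispensable, and it is the single identity needed to bridge local consistency under concatenation with the cyclic structure of $\omega$.
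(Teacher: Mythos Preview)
Your proposal is correct and follows essentially the same approach as the paper. The paper streamlines the well-definedness check slightly by observing that, since $\omega$ is cyclic, every orbit point is already of the form $w\omega$ (because $\sigma^j(\omega)=\omega_{j+1}\dots\omega_p\,\omega$), so two writings differ only by appending copies of $I=\omega_1\dots\omega_p$ to $w$, and consistency reduces directly to $S_I^kO_\omega=O_\omega$ without the elementary-moves/wrap-around bookkeeping.
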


\begin{proof}
Assume (i). Let $\omega=\underline{\omega_1\dots\omega_p}$ in $\Omega$. 
From \eqref{eq1.4.2} it follows that 
\begin{equation}
S_{\omega_1}\dots S_{\omega_{p}}P(\omega)\H=P(\omega)\H. 
\label{eq1.5.1}
\end{equation}
Let $O=\{e_\lambda : \lambda\in E^{-1}(\omega)\}$. Then (ii) follows from Theorem \ref{th1.2}

Assume now (ii). As in the proof of Theorem \ref{th1.4}, we can assume that the support $\Omega$ consists of a single orbit of a cyclic word $\omega=\underline{\omega_1\dots\omega_p}$. Assume in addition that $p$ is minimal with the property that $\sigma^p(\omega)=\omega$. Then each word in $\Omega$ can be written as $i_1\dots i_n\omega$. Note that if $i_1\dots i_n\omega=j_1\dots j_{n+m}\omega$ then $i_1=j_1,\dots,i_n=j_n$, $m$ is a multiple of $p$, $m=kp$,  and $j_{n+1}\dots j_m$ is a repetition of $\omega_1\dots\omega_p$, $k$ times. 

Using again the relations \eqref{eq1.4.2}, define the orthonormal basis in $P(\{i_1\dots i_n\omega\})\H$ by $S_{i_1}\dots S_{i_n}O_\omega$. Using the previous relations, and the hypotheses in (ii), we see that this does not depend on the choice of the writing $i_1\dots i_n\omega$. And it is clear that the union of all these orthonormal bases for all choices of $i_1\dots i_n\omega$ forms an orthonormal basis for $\H$. 
\end{proof}

\begin{theorem}\label{th1.3}
Let $(S_i)_{i\in \bz_N}$ be a permutative representation of $\mathcal O_N$ with orthonormal basis $\{e_i : i\in I\}$ and assume that the coding map is injective. Suppose $P_M$ is a projection onto a closed subspace $M$ invariant under all $S_l^*$, $l\in \bz_N$. Assume that the following condition is satisfied:
\begin{equation}
\mbox{There exists $\lambda_0\in I$ such that }\|P_Me_{\lambda_0}\|=\max\{\|P_Me_i\| : i\in I\}.
\label{eq1.3.1}
\end{equation}
Then $e_{\lambda_0}$ is in $M$.
\end{theorem}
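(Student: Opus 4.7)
The plan is to leverage Proposition~\ref{pr2.4}: invariance of $M$ under $(S_l^*)$ is equivalent to $P_M\leq \alpha(P_M)$ for the endomorphism $\alpha$ in \eqref{eq0.2.4}, and the increasing sequence $\alpha^n(P_M)$ converges strongly to a projection $Q$ that commutes with the representation. I will first identify $P_M e_{\lambda_0}$ with $Q e_{\lambda_0}$ using a squeeze argument that uses both the order $P_M\leq \alpha^n(P_M)$ and the maximality hypothesis, and then use the injectivity of the coding map $E$ to force $Q e_{\lambda_0}$ into the rank-one atom sitting at $E(\lambda_0)$.

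For the first stage, I would exploit the permutative structure to compute
\[
\|\alpha^n(P_M) e_{\lambda_0}\|^2 \;=\; \sum_{|I|=n}\|S_I P_M S_I^* e_{\lambda_0}\|^2 \;=\; \|P_M e_{\sigma^n(\lambda_0)}\|^2,
\]
where the collapse to a single nonzero summand uses that $S_I^* e_{\lambda_0}$ vanishes unless $I=E(\lambda_0)|n$, in which case it equals $e_{\sigma^n(\lambda_0)}$, together with the isometry of $S_I$ and the orthogonality of the ranges $S_I\H$. Combining this with the double inequality
\[
\|P_M e_{\lambda_0}\|^2 \;\leq\; \|\alpha^n(P_M) e_{\lambda_0}\|^2 \;=\; \|P_M e_{\sigma^n(\lambda_0)}\|^2 \;\leq\; \|P_M e_{\lambda_0}\|^2,
\]
where the lower bound is $P_M\leq \alpha^n(P_M)$ and the upper bound is the maximality hypothesis, one gets equality throughout. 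Since $\alpha^n(P_M)-P_M$ is an orthogonal projection (perpendicular to $P_M$), Pythagoras forces $(\alpha^n(P_M)-P_M)e_{\lambda_0}=0$, i.e.\ $\alpha^n(P_M)e_{\lambda_0}=P_M e_{\lambda_0}$ for every $n$.

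Passing to the strong limit gives $Q e_{\lambda_0}=P_M e_{\lambda_0}$. Now the injectivity of $E$ closes the argument: by Theorem~\ref{th1.2}, $P(\{E(\lambda_0)\})$ is the projection onto the closed span of $\{e_\lambda:\lambda\in E^{-1}(E(\lambda_0))\}=\{e_{\lambda_0}\}$, so this atom is rank one. Since $Q$ commutes with the representation it commutes with the entire projection-valued measure (by Theorem~\ref{th0.2}, or directly because cylinder projections are of the form $S_IS_I^*$), in particular with $P(\{E(\lambda_0)\})$. Hence $Q e_{\lambda_0}\in\bc\, e_{\lambda_0}$, and $Q e_{\lambda_0}=c\,e_{\lambda_0}$ with $c\in\{0,1\}$ since $Q$ is idempotent. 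Provided $M\neq\{0\}$, the maximality hypothesis forces $\|P_M e_{\lambda_0}\|>0$, ruling out $c=0$, and therefore $e_{\lambda_0}=Q e_{\lambda_0}=P_M e_{\lambda_0}\in M$.

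The main obstacle I expect is the squeeze computation: without the permutative hypothesis the quantity $\|\alpha^n(P_M)e_{\lambda_0}\|^2$ would be a genuine sum $\sum_{|I|=n}\|P_M S_I^* e_{\lambda_0}\|^2$ over all descendants of $e_{\lambda_0}$, and no single $\|P_M e_i\|$ would bound it. It is precisely the collapse of $S_I^* e_{\lambda_0}$ onto one branch of the tree of words that lets the maximality assumption close the estimate, and it is the injectivity of $E$ that promotes the identity $Q e_{\lambda_0}=P_M e_{\lambda_0}$ to the membership $e_{\lambda_0}\in M$ by making the relevant atom one-dimensional.
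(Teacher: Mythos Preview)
Your proof is correct and takes a genuinely different route from the paper's.

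The paper's argument is matrix-theoretic: it fixes an orthonormal basis $(u_j)_{j\in J}$ for $M$, encodes the restrictions $S_l^*|_M$ by matrices $Z_l$ satisfying $\sum_l Z_lZ_l^*=I$, and introduces the coordinate vectors $\mathbf u_\lambda=(\langle u_j,e_\lambda\rangle)_j$, for which $\|\mathbf u_\lambda\|=\|P_Me_\lambda\|$ and $\mathbf u_{\sigma_l(\lambda)}=Z_l\mathbf u_\lambda$. Contractivity of the $Z_l$ together with maximality forces $\|\mathbf u_{\sigma^n(\lambda_0)}\|=\|\mathbf u_{\lambda_0}\|$, and from the identity $\sum_{|\omega|=n}Z_\omega Z_\omega^*=I$ one deduces $Z_{\omega'}^*\mathbf u_{\lambda_0}=0$ for $\omega'\neq E(\lambda_0)|n$, hence $\mathbf u_{\lambda_0}\perp\mathbf u_\lambda$ for every $\lambda\neq\lambda_0$ (here injectivity of $E$ is used). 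A final change of orthonormal basis in $M$ then exhibits $e_{\lambda_0}$ as a scalar multiple of one of the basis vectors.

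Your argument replaces all of this bookkeeping with the order relation $P_M\leq\alpha^n(P_M)$ from Proposition~\ref{pr2.4} and the spectral picture of Theorem~\ref{th1.2}. The permutative collapse of $S_I^*e_{\lambda_0}$ to a single branch makes the squeeze $\|P_Me_{\lambda_0}\|^2\leq\|\alpha^n(P_M)e_{\lambda_0}\|^2=\|P_Me_{\sigma^n(\lambda_0)}\|^2\leq\|P_Me_{\lambda_0}\|^2$ work, Pythagoras for the difference of nested projections gives $\alpha^n(P_M)e_{\lambda_0}=P_Me_{\lambda_0}$, and the strong limit $Q=\vee_n\alpha^n(P_M)$ lands in the commutant. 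Injectivity of $E$ then enters exactly once, to make the atom $P(E(\lambda_0))$ rank one, so that $Qe_{\lambda_0}\in\{0,e_{\lambda_0}\}$.

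Both proofs rest on the same two pillars---maximality propagates equality along the forward orbit, and injectivity of $E$ pins down $e_{\lambda_0}$---but yours leverages the projection-valued-measure machinery already set up in the paper, while the paper's own proof is self-contained linear algebra. Your version is shorter and more conceptual once Sections~2--3 are in hand; the paper's version has the merit of making explicit the matrices $Z_l$ that reappear in Theorem~\ref{th2.4} and of not invoking the spectral measure at all. Your caveat about $M\neq\{0\}$ is appropriate; the same tacit assumption is present in the paper's proof.
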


  \begin{proof}
  Let $(u_i)_{i\in J}$ be an orthonormal basis for $M$. Since the space is invariant under $S_l^*$ there exist constants $\alpha_{ij}^l$ such that
  $$S_l^*u_i=\sum_{j\in J}\alpha_{ji}^lu_j.$$
  Decompose $u_i$ in the basis $\{e_i\}$
  $$u_i=\sum_{\lambda\in I}u_{i,\lambda}e_\lambda.$$
  We have, with \eqref{eq1.2.3} 
  $$S_l^*u_i=\sum_{\lambda'\in I}u_{i,\sigma_l(\lambda')}e_{\lambda'}.$$
  Also
  $$S_l^*u_i=\sum_{j\in J}\alpha_{ji}^l\sum_{\lambda\in I}u_{j,\lambda}e_\lambda=\sum_{\lambda\in I}\sum_{j\in J}\alpha_{ji}^lu_{j,\lambda}e_\lambda.$$
  Therefore
  $$u_{i,\sigma_l(\lambda)}=\sum_{j\in J}\alpha_{ji}^lu_{j,\lambda}.$$
  
  Define the column vector $\bu_\lambda=(u_{j,\lambda})_{j\in J}^T$. Note that
  
  \begin{equation}
  \|\bu_\lambda\|^2=\sum_{j\in J}|\ip{u_j}{e_\lambda}|^2=\sum_{j\in J}|\ip{u_j}{P_Me_{\lambda}}|^2=\|P_Me_\lambda\|^2.
\label{eq1.3.p}
\end{equation}

  Define also the matrices $Y_l=(\alpha_{ij}^l)_{i,j\in J}$. Let $Z_l = Y_l^T$. Thus, we have
  \begin{equation}
  \bu_{\sigma_l(\lambda)}=Z_l\bu_\lambda,\quad(\lambda\in I).
	\label{eq1.3.2}
	\end{equation}
  
  Using the relation $\sum_l S_lS_l^*=I$ and the fact that all the vectors $S_lu_i$, $l\in \bz_N$, $i\in J$ are all orthogonal we get that
  \begin{equation}
\sum_{l\in L}Z_l Z_l^* =I.
\label{eq1.3.3}
\end{equation}

For $\omega=l_0\dots l_{n-1}$ in $\bz_N^n$, denote by $Z_\omega:=Z_{l_0}\dots Z_{l_{n-1}}$.

By induction, for all $n\in\bn$:
\begin{equation}
\sum_{\omega\in \bz_N^n}Z_\omega Z_\omega^*=I.
\label{eq1.3.3'}
\end{equation}

 This implies that 
 \begin{equation}
\sum_{\omega\in \bz_N^n}\|Z_\omega^* v\|^2=\|v\|^2,\quad(v\in P\H).
\label{eq1.3.4}
\end{equation}

In particular $\|Z_l^* \|=\|Z_l\|\leq 1.$

From \eqref{eq1.3.2}, we get that $\|\bu_{\sigma_l(\lambda)}\|\leq \|\bu_\lambda\|$ and, from this, $\|\bu_\lambda\|\leq\|\bu_{\sigma(\lambda)}\|$, for all $\lambda\in I$. Apply this to $\lambda_0$, we get $\|\bu_{\sigma(\lambda_0)}\|=\|\bu_{\lambda_0}\|$. By induction $\|\bu_{\sigma^n(\lambda_0)}\|=\|\bu_{\lambda_0}\|$ for all $n\in\bn$. 

Let $E(\lambda_0)=l_0l_1\dots$ be the coding of $\lambda_0$. For any $n\in\bn$, let $\omega_0=l_0\dots l_{n-1}$, we have 
$$\|\bu_{\lambda_0}\|^2=\|Z_{\omega_0}\bu_{\sigma^n(\lambda_0)}\|^2=|\ip{Z_{\omega_0}^*Z_{\omega_0}\bu_{\sigma^n(\lambda_0)}}{\bu_{\sigma^n(\lambda_0)}} | \leq \|\bu_{\sigma^n(\lambda_0)}\|^2=\|\bu_{\lambda_0}\|^2.$$
Since we have equalities in all inequalities we get that $Z_{\omega_0}^*Z_{\omega_0}\bu_{\sigma^n(\lambda_0)}=\bu_{\sigma^n(\lambda_0)}$. Thus $Z_{\omega_0}^* \bu_{\lambda_0}=\bu_{\sigma^n(\lambda_0)}$. 

With \eqref{eq1.3.3'}, we obtain $Z_{\omega'}^* \bu_{\lambda_0}=0$ for all $\omega'\neq\omega_0$ in $\bz_N^n$, which implies that $\bu_{\lambda_0}\perp Z_{\omega'}v$ for all $\omega'\neq\omega_0$ and $v\in M$.

But then, for $\lambda=l_0'l_1'\dots \neq \lambda_0$, since the coding map is injective there exists $n$ such that $l_0\dots l_{n-1}\neq l_0'\dots l_{n-1}'$, and this means $\bu_{\lambda_0}\perp Z_{l_0'}\dots Z_{l_{n-1}'}\bu_{\sigma^n(\lambda)}=\bu_{\lambda}.$

Thus $\bu_{\lambda_0}\perp \bu_{\lambda}$ for all $\lambda\neq\lambda_0$. 

We can change the orthonormal basis $(u_j)$ is such a way that $\bu_{\lambda_0}$ has the form $(\alpha,0,\dots)^T$, so the only non-zero component is on some fixed position $j_0$. Indeed, just pick some unitary operator $A=(a_{ji})_{j,i\in J}$ such that $A\bu_{\lambda_0}$ has the given form, and define $v_i=\sum_{j\in J}a_{ji}u_j$, for all $i\in J$. Note that the norms $\|\bu_\lambda\|$ are preserved by \eqref{eq1.3.p}. Since $\bu_\lambda\perp\bu_{\lambda_0}$ for $\lambda\neq \lambda_0$, we get that $\bu_\lambda$ is of the form $(0,*,*,\dots)^T$. But then $u_j=\alpha e_{\lambda_0}+\sum_{\lambda\neq \lambda_0} 0\cdot e_\lambda$. This means that $e_{\lambda_0}$ is in $M$.
  
  \end{proof}

  \begin{corollary}\label{cor1.4}
  In the hypothesis of Theorem \ref{th1.3}, if $M$ is finite dimensional then there exists a cycle $\lambda$ in $I$, i.e., $\sigma^n(\lambda)=\lambda$ for some $n\geq1$ and $e_\lambda\in M$. 
  \end{corollary}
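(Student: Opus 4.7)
The plan is to apply Theorem \ref{th1.3} to obtain one basis vector inside $M$, and then iterate $\sigma$ using the invariance of $M$ under the adjoints $S_l^*$ to produce an infinite sequence of basis vectors inside $M$; finite dimensionality of $M$ then forces repetition, which gives a cycle.

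First I would verify that the hypothesis \eqref{eq1.3.1} of Theorem \ref{th1.3} is actually satisfied when $M$ is finite dimensional, namely that the supremum $\sup_{i\in I}\|P_Me_i\|$ is attained. Since $\{e_i : i\in I\}$ is an orthonormal basis of $\H$, the trace identity gives
\[
\sum_{i\in I}\|P_Me_i\|^{2}=\Tr(P_M)=\dim M<\infty.
\]
Assuming $M\neq 0$, the supremum $s:=\sup_{i}\|P_Me_i\|$ is positive, and by the above summability only finitely many $i\in I$ satisfy $\|P_Me_i\|\ge s/2$; the supremum is therefore attained at some $\lambda_0\in I$. Theorem \ref{th1.3} then yields $e_{\lambda_0}\in M$.

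Next I would use the invariance of $M$ under the $S_l^*$ to propagate this membership along the orbit of $\lambda_0$ under $\sigma$. By Proposition \ref{pr1.2}, there exists a unique $j_0\in\bz_N$ such that $\lambda_0=\sigma_{j_0}(\sigma(\lambda_0))$, and $S_{j_0}^{*}e_{\lambda_0}=e_{\sigma(\lambda_0)}$. Since $e_{\lambda_0}\in M$ and $M$ is $S_{j_0}^{*}$-invariant, this gives $e_{\sigma(\lambda_0)}\in M$. Iterating, $e_{\sigma^{n}(\lambda_0)}\in M$ for every $n\ge 0$.

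Finally, because $M$ is finite dimensional, the orthonormal family $\{e_{\sigma^{n}(\lambda_0)}\}_{n\ge 0}$ (being a subfamily of the basis $\{e_i\}$) cannot consist of infinitely many distinct vectors. Hence there exist integers $m>n\ge 0$ with $\sigma^{m}(\lambda_0)=\sigma^{n}(\lambda_0)$. Setting $\lambda:=\sigma^{n}(\lambda_0)$ and $p:=m-n\ge 1$ gives $\sigma^{p}(\lambda)=\lambda$, i.e., $\lambda$ is a cycle in $I$, and $e_\lambda\in M$ by the previous step. No step presents a serious obstacle; the only subtlety worth double-checking is that the maximum in \eqref{eq1.3.1} is genuinely attained, which is where finite dimensionality of $M$ is used before the orbit argument.
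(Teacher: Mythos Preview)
Your proof is correct and follows essentially the same approach as the paper: first establish that $\sum_{i}\|P_Me_i\|^2=\dim M<\infty$ so the maximum in \eqref{eq1.3.1} is attained, apply Theorem \ref{th1.3} to place some $e_{\lambda_0}$ in $M$, iterate $\sigma$ via the $S_l^*$-invariance to get $e_{\sigma^n(\lambda_0)}\in M$ for all $n$, and use finite dimensionality to force a repetition. Your write-up is in fact slightly more explicit than the paper's in identifying the cycle point as $\lambda=\sigma^n(\lambda_0)$.
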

  
  \begin{proof}
  Take $u_1,\dots, u_p$ an orthonormal basis for $M$. We use the notations in the proof of Theorem \ref{th1.3}. We have $\|\bu_\lambda\|=\|P_Me_\lambda\|$ and 
  $$\sum_\lambda\|P_Me_\lambda\|^2=\sum_\lambda \|\bu_\lambda\|^2=\sum_\lambda\sum_{j=1}^n|u_{j,\lambda}|^2=\sum_{j=1}^p\sum_\lambda|u_{j,\lambda}|^2=\sum_{j=1}^p\|u_j\|^2=p.$$
  Thus all but finitely many numbers $\|P_Me_\lambda\|$ are close to zero and therefore there is a maximum in \eqref{eq1.3.1}. 
  
  By Theorem \ref{th1.3}, there is some $\lambda$ such that $e_\lambda\in M$. Since $M$ is invariant under all$S_l^*$, we get that $e_{\sigma^n(\lambda)}\in M$ for all $n$. Since the space is finite dimensional, there are $m\neq n$ such that $\sigma^n(\lambda)=\sigma^m(\lambda)$ and the corollary follows.

  \end{proof}

  \begin{remark}\label{rem3.10}
  Theorem \ref{th1.3} and Corollary \ref{cor1.4} are not true if the encoding map is not injective. Here is an example: consider two different symbols $a$ and $b$, define $\Lambda$ to be the set $$\Lambda=\{a,b, wa, wb\mbox{ where $w$ is a word on 0,1 of length $\geq1$, $w\neq\ty$}\}.$$
  The Hilbert space is $l^2(\Lambda)$. Define the isometries $S_0$, $S_1$ by $S_0\delta_a=\delta_b$, $S_0\delta_b=\delta_a$, $S_0\delta_{wa}=\delta_{0wa}$, $S_0\delta_{wb}=\delta_{0wb}$, $S_1\delta_a=\delta_{1a}$, $S_1\delta_b=\delta_{1b}$, $S_1\delta_{wa}=\delta_{1wa}$, $S_1\delta_{wb}=\delta_{1wb}$. 
  
  It is easy to check that this defines a permutative representation of $\O_2$ with basis $\{\delta_\lambda : \lambda\in\Lambda\}$. 
  
  Let $M$ be the 1-dimensional space spanned by $\delta_a+\delta_b$. We have $S_0^*(\delta_a+\delta_b)=\delta_b+\delta_a$ and $S_1^*(\delta_a+\delta_b)=0$. So $M$ is invariant for $S_i^*$, $i=1,2$. However $M$ does not contain any element of the basis. The coding map is {\it not} injective, since the coding of $a$ and $b$ is $\underline 0$. 
  
  \end{remark}

  \begin{theorem}\label{th3.11}
  Let $(S_i)_{i\in\bz_N}$ be a purely atomic representation of $\O_N$ on a Hilbert space $\H$ with projection valued measure $P$. Suppose $M$ is a finite dimensional subspace which is invariant for all the maps $S_i^*$, $i\in\bz_N$. Then there exists a cycle word $\omega$ and a vector $v\neq 0$ in $P(\omega)\H\cap M$. Moreover, if $\operatorname*{Per}$ is the set of periodic points in the support of $P$, then 
  \begin{equation}
  M\subset P(\operatorname*{Per})\H.
\label{eq3.11.1}
\end{equation}

   \end{theorem}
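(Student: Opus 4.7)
The strategy is to reduce to the hypotheses of Theorem \ref{th2.6} by passing to the smallest invariant subspace of the representation containing $M$, and then import the conclusions of that theorem into the purely atomic setting.

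First I would dispose of the trivial case $M=0$ and then apply Proposition \ref{pr2.4} to the $(S_i^*)$-invariant subspace $M$. This yields an increasing sequence of projections $\alpha^n(P_M)$ and a projection $Q:=\bigvee_n\alpha^n(P_M)$ which commutes with the representation; moreover, by construction, $M\subset Q\H$ and $M$ is cyclic for the restriction of the representation to $Q\H$. Next I would check that $Q$ commutes with the projection valued measure $P$: since $Q$ commutes with all $S_i,S_i^*$, it commutes with every $U(\xi)$ from Theorem \ref{th0.2}, hence with the spectral resolution $P$ by uniqueness. Consequently the restriction of $P$ to $Q\H$ is the projection valued measure associated to the subrepresentation on $Q\H$, and since the ambient representation is purely atomic ($\sum_\omega P(\omega)=I$), so is this restriction ($\sum_\omega P(\omega)Q = Q$).

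Now $M$ is a finite dimensional, cyclic $(S_i^*)$-invariant subspace of $Q\H$, so Theorem \ref{th2.6} applies to the subrepresentation on $Q\H$: for every non-periodic word $\omega$, $P(\omega)Q=0$, and for every cyclic word $\omega$, $P(\omega)Q\H\subset M$. Since the restriction is purely atomic and all non-periodic atoms have been killed, we obtain the orthogonal decomposition
\begin{equation*}
Q\H = \bigoplus_{\omega\in\operatorname*{Per}\cap\operatorname*{supp}P} P(\omega)Q\H \subset P(\operatorname*{Per})\H,
\end{equation*}
so in particular $M\subset Q\H\subset P(\operatorname*{Per})\H$, which proves the inclusion \eqref{eq3.11.1}. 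For the first assertion, because $M\neq 0$ forces $Q\H\neq 0$, the decomposition above shows that there is at least one cyclic word $\omega$ with $P(\omega)Q\H\neq 0$; by Theorem \ref{th2.6} this space is contained in $M$, yielding the desired nonzero vector $v\in P(\omega)\H\cap M$.

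The main obstacle, which is also the point where care is needed, is step two: one must verify that $Q$ really does commute with $P$ (so that the subrepresentation on $Q\H$ has $P(\cdot)|_{Q\H}$ as its projection valued measure and inherits the purely atomic property), and that $M$ remains cyclic and finite dimensional inside $Q\H$ after the reduction. Once those two bookkeeping items are in place, Theorem \ref{th2.6} does all of the analytic work.
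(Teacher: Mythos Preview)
Your approach is essentially the paper's: pass to $Q=\bigvee_n\alpha^n(P_M)$, observe that $M$ is finite dimensional and cyclic for the subrepresentation on $Q\H$, and apply Theorem~\ref{th2.6} to this (still purely atomic) subrepresentation. The paper organizes matters slightly differently---it first projects $M$ onto a single orbit and proves \eqref{eq3.11.1} by a direct orthogonality argument before bringing in $Q$---but the engine is identical, and your route to \eqref{eq3.11.1} via $M\subset Q\H\subset P(\operatorname*{Per})\H$ is arguably tidier.

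There is one small slip to repair. Your decomposition $Q\H=\bigoplus_{\omega\in\operatorname*{Per}\cap\operatorname*{supp}P}P(\omega)Q\H$ runs over \emph{periodic} words, so it does not by itself exhibit a \emph{cyclic} $\omega$ with $P(\omega)Q\H\neq0$; Theorem~\ref{th2.6} only places the cyclic atoms inside $M$. You need one more line, and it is exactly the step the paper spells out: if $P(\omega')Q\neq0$ for some periodic $\omega'$, choose $k$ with $\sigma^k(\omega')$ cyclic; since $Q$ commutes with each $S_i^*$, Lemma~\ref{lem2.7.1} gives $S_{\omega'|k}^*\,P(\omega')Q\H=P(\sigma^k(\omega'))Q\H\neq0$, and Theorem~\ref{th2.6} then puts this space inside $M$.
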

   
   \begin{proof}
   If the representation is decomposed into a direct sum of representations, by projecting $M$ onto the components of the direct sum, we can assume that $M$ is contained in one of these components. We can decompose the representation into distinct orbits. 
   
   Suppose $v\in M\cap P(\omega)\H$ for some non-periodic $\omega$. Then the vectors $S_{\omega|k}^*v\in M$ are orthogonal for $k\geq 0$, by Lemma \ref{lem2.7.1}. Since $M$ is finite dimensional we get that $v=0$. This proves \eqref{eq3.11.1}. Thus we can assume $M$ is contained in $P(\Orbit(\omega))$ for some cycle $\omega$.

   Now take $Q=\vee\alpha^n(M)$. $Q$ commutes with the representation, by Proposition \ref{pr2.4}, and $M$ is cyclic for the subrepresentation of $\O_N$ on $Q\H$. The projection valued measure corresponding to this subrepresentation will be $P^r(A)=P(A)Q=QP(A)$ for all Borel subsets $A$ of $\K$. Clearly $P^r$ is atomic and supported on $\Orbit(\omega)$. With Theorem \ref{th2.6}, we have $P^r(\omega)\H\subset M$ so $QP(\omega)\subset M$.
   We can not have $QP(\omega)=0$, because then we apply $S_w,S_w^*$ for all finite words, and with Lemma \ref{lem2.7.1}, we obtain that $QP(\alpha)=0$ for all $\alpha\in\Orbit(\omega)$ and so $Q=0$, a contradiction. 
   Take a vector $v$ in $QP(\omega)$, then $v$ is in $P(\omega)\H\cap M$. 

   \end{proof}

   \begin{corollary}\label{cor4.18}
   If a purely atomic representation of $\O_N$ has a finite dimensional, cyclic, invariant space $M$ for all $S_i^*$, $i\in\bz_N$, then the support contains only periodic points, and for all cycles $\omega$ in the support, 
   $P(\omega)\H$ is contained in $M$.
   \end{corollary}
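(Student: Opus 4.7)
The plan is to derive Corollary \ref{cor4.18} as an essentially immediate consequence of Theorem \ref{th2.6}, which was established earlier without requiring the representation to be purely atomic. The hypotheses of the corollary --- a finite-dimensional, cyclic subspace $M$ invariant under all $S_i^*$ --- are exactly the hypotheses of Theorem \ref{th2.6}; the only additional assumption here is that the representation itself is purely atomic, and this is precisely what lets one speak of a support and translate Theorem \ref{th2.6}'s pointwise conclusions into structural statements about that support.

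First I would invoke the first conclusion of Theorem \ref{th2.6}: $P(\omega) = 0$ for every non-periodic $\omega \in \K_N$. Under the purely atomic hypothesis, $\operatorname{supp} P = \{\omega : P(\omega) \neq 0\}$ and $\oplus_{\omega \in \operatorname{supp} P} P(\omega) = I$, so it follows immediately that $\operatorname{supp} P$ contains only periodic words, which is the first assertion of the corollary. Next I would invoke the second conclusion of Theorem \ref{th2.6}, which asserts that $P(\omega)\H \subset M$ for every cyclic word $\omega \in \K_N$; restricting this to cycles $\omega \in \operatorname{supp} P$ yields the second assertion.

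I do not anticipate any real obstacle: the corollary is a direct specialization of Theorem \ref{th2.6} to the purely atomic setting. The only point requiring attention is the distinction from Definition \ref{def1.3} between \emph{periodic} (allowing a preperiodic tail, $\sigma^k(\omega) = \sigma^{k'}(\omega)$ for some $k \neq k'$) and \emph{cyclic} ($\sigma^p(\omega) = \omega$ for some $p > 0$); Theorem \ref{th2.6} supplies exactly the right information for each of these two classes, and Corollary \ref{cor4.18} makes one claim from each.
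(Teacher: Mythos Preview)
Your proposal is correct and matches the paper's own proof, which simply states ``Follows directly from Theorem~\ref{th2.6}.'' Your elaboration of how Theorem~\ref{th2.6}'s two conclusions translate into the two assertions of the corollary is exactly the intended reading.
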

   
   \begin{proof}
   Follows directly from Theorem \ref{th2.6}.
   \end{proof}

   \begin{corollary}\label{cor4.19}
   Let $(S_i)_{i\in\bz_N}$ be a purely atomic representation of $\O_N$ with projection valued measure $P$ with support $\Omega$. If the representation has a finite dimensional cyclic $(S_i^*)$-invariant space, then there is a unique minimal finite dimensional cyclic $(S_i^*)$-invariant space:
   $$M=\oplus\{P(\omega)\H:\omega\mbox{ cycle in }\Omega\}.$$
   \end{corollary}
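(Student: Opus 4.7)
The plan is to show directly that the candidate space $M := \oplus\{P(\omega)\H:\omega\text{ cycle in }\Omega\}$ itself satisfies all three properties (finite dimensional, $(S_i^*)$-invariant, cyclic), and then extract minimality and uniqueness immediately from Corollary \ref{cor4.18}. So the strategy reduces the corollary to verifying those three properties for this particular $M$.

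First I would invoke the hypothesis to produce some finite dimensional cyclic $(S_i^*)$-invariant space $M_0$. Corollary \ref{cor4.18} then yields two facts at once: (a) $\supp P = \Omega$ contains only periodic points, and (b) $P(\omega)\H \subseteq M_0$ for every cycle $\omega\in\Omega$. Summing (b) over the (orthogonal) atom subspaces gives $M\subseteq M_0$, so $M$ is finite dimensional; in particular only finitely many cycles $\omega\in\Omega$ can occur.

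Next, $(S_i^*)$-invariance of $M$ follows from Lemma \ref{lem2.7.1}. For a cycle $\omega=\underline{\omega_1\dots\omega_p}$, the shifted word $\sigma(\omega)=\underline{\omega_2\dots\omega_p\omega_1}$ is again a cycle, and since $S_{\omega_1}^* P(\omega)S_{\omega_1}=P(\sigma(\omega))$ with $S_{\omega_1}^*$ unitary between these atoms, $P(\sigma(\omega))\neq 0$, so $\sigma(\omega)\in\Omega$. Lemma \ref{lem2.7.1} then gives $S_i^*P(\omega)\H=P(\sigma(\omega))\H\subseteq M$ when $i=\omega_1$ and $0$ otherwise, so $S_i^*M\subseteq M$. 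For cyclicity, by (a) every $\omega\in\Omega$ is periodic, hence some $\omega_0:=\sigma^k(\omega)$ is a cycle in $\Omega$ and $\omega = a\omega_0$ for $a=\omega_1\cdots\omega_k$. Iterating Lemma \ref{lem2.7.1}, $S_a$ maps $P(\omega_0)\H$ unitarily onto $P(\omega)\H$. Since $P(\omega_0)\H\subseteq M$, this gives $P(\omega)\H\subseteq \Span\{S_Iv:v\in M, I\text{ finite word}\}$; then pure atomicity, $\oplus_{\omega\in\Omega}P(\omega)\H=\H$, forces this span to be all of $\H$.

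Finally, minimality (and therefore uniqueness) is immediate: Corollary \ref{cor4.18} applied to any candidate finite dimensional cyclic $(S_i^*)$-invariant space forces it to contain every $P(\omega)\H$ for $\omega$ a cycle in $\Omega$, hence to contain $M$. The only place where one might worry is the finite dimensionality of $M$, which would fail if $\Omega$ contained infinitely many cycles (or a cycle $\omega$ with $\dim P(\omega)=\infty$); but this potential obstacle is precisely what the existence hypothesis combined with Corollary \ref{cor4.18} rules out, so the argument goes through cleanly.
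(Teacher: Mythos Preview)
Your proof is correct and follows essentially the same approach as the paper: both use Corollary~\ref{cor4.18} to obtain $M\subseteq M_0$ (hence finite dimensionality and minimality/uniqueness), and then verify that $M$ itself is cyclic and $(S_i^*)$-invariant. The only cosmetic difference is that the paper cites Remark~\ref{rem3.5} for the invariance and cyclicity of $M$, whereas you unpack that argument directly via Lemma~\ref{lem2.7.1}; the content is the same.
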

   
   \begin{proof}
   Corollary \ref{cor4.18} shows that every finite dimensional cyclic $(S_i^*)$-invariant space must contain $M$. From Remark \ref{rem3.5} we see that $M$ is cyclic and $(S_i^*)$-invariant. 
   \end{proof}

   \section{Examples}
   We already mentioned a host of areas where the representations of $\O_N$  play a central role, and in the present section we outline four:  wavelets, finitely correlated states, Hadamard triples, and Walsh bases. We show that, with our present theorems, we are able to advance conclusions contained in earlier papers on these subjects, see e.g., \cite{BJKW00, MR2362879,  MR1285568, MR2318495, DuJo06a, DuJo08a, MR2966145, MR2563494, MR1158756}.   
   
   \subsection{Wavelet representations}
   
   \begin{definition}\label{def4.1}
   Let $\bt:=\{z\in\bc : |z|=1\}$ with the Haar measure. Let $N\geq 2$. A family of $N$ functions $m_i$ in $L^2(\bt)$, $i\in\bz_N$, is called a {\it QMF system} if 
   \begin{equation}
\frac{1}{N}\sum_{w^N=z}m_i(w)\cj{m_j(w)}=\delta_{ij},\quad(i,j\in\bz_N).
\label{eq4.1.1}
\end{equation}
   
   Given a QMF system $(m_i)_{i\in\bz_N}$, one defines the operators $S_i$ on $L^2(\bt)$, by
   \begin{equation}
S_if(z)=m_i(z)f(z^N),\quad(f\in L^2(\bt), z\in\bt).
\label{eq4.1.2}
\end{equation}
   \end{definition}
  
  \begin{proposition}\label{pr4.2}\cite{MR2277210}
  The operators $(S_i)_{i\in\bz_N}$ in Definition \ref{def4.1} form a representation of $\O_N$ on $L^2(\bt)$ called the wavelet representation associated to the QMF system $(m_i)_{i\in\bz_N}$. The adjoints are given by the formula
  \begin{equation}
S_i^*f(z)=\frac{1}{N}\sum_{w^N=z}\cj{m_i(w)}f(w),\quad(f\in L^2(\bt),z\in\bt).
\label{eq4.2.1}
\end{equation}
  
  \end{proposition}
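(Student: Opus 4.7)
The strategy is to derive the adjoint formula directly from the definition of inner product adjoint, and then verify the two Cuntz relations \eqref{eq2.0.1} by a straightforward computation that uses the QMF condition \eqref{eq4.1.1} (once directly, once in its ``dual'' form).

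First I would establish \eqref{eq4.2.1}. The $N$-th power map $z \mapsto z^N$ on $\bt$ is an $N$-to-$1$ covering that pushes forward the Haar measure to itself, so for any $h \in L^1(\bt)$ one has $\int_\bt h(z)\,dz = \frac{1}{N}\int_\bt \sum_{w^N = u} h(w)\,du$. Applying this to $h(w) = m_i(w) f(w^N)\cj{g(w)}$ yields
\begin{equation*}
\ip{S_i f}{g} = \int_\bt m_i(z) f(z^N)\cj{g(z)}\,dz = \int_\bt f(u)\,\cj{\left(\frac{1}{N}\sum_{w^N = u}\cj{m_i(w)}g(w)\right)}\,du,
\end{equation*}
which proves \eqref{eq4.2.1}. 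Boundedness of $S_i$ follows a posteriori from the Cuntz relations (each $S_i$ is an isometry), but if one wants an a priori bound it is enough to observe that the QMF condition forces $m_i \in L^\infty(\bt)$ in typical applications; the proof of the Cuntz relations below works purely formally.

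Next, for $S_i^* S_j = \delta_{ij} I$, I would compute directly from the two formulas:
\begin{equation*}
(S_i^* S_j f)(z) = \frac{1}{N}\sum_{w^N = z}\cj{m_i(w)} m_j(w) f(w^N) = \left(\frac{1}{N}\sum_{w^N = z} m_j(w)\cj{m_i(w)}\right) f(z) = \delta_{ij}\, f(z),
\end{equation*}
where the last equality is exactly \eqref{eq4.1.1}. This step is immediate.

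The only nontrivial point, and what I expect to be the main obstacle, is the relation $\sum_i S_i S_i^* = I$, because it requires the ``dual'' QMF identity $\sum_{i\in\bz_N} m_i(w)\cj{m_i(w')} = N\,\delta_{w,w'}$ for $w,w'$ with $w^N = w'^N$. This is not literally one of the hypotheses, so I would first extract it from \eqref{eq4.1.1}: for each $z \in \bt$, assemble the values $\{m_i(w) : w^N = z\}$ into an $N \times N$ matrix $M(z) = \frac{1}{\sqrt N}\bigl(m_i(w)\bigr)$; the hypothesis \eqref{eq4.1.1} says $M(z)^*M(z) = I$, so $M(z)$ is unitary and therefore $M(z)M(z)^* = I$, which is the dual identity. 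With this in hand, parametrising the fiber $\{w : w^N = z^N\}$ as $\{\zeta z : \zeta^N = 1\}$, one computes
\begin{equation*}
\sum_{i\in\bz_N} (S_i S_i^* f)(z) = \frac{1}{N}\sum_{\zeta^N = 1} f(\zeta z)\sum_{i\in\bz_N} m_i(z)\cj{m_i(\zeta z)} = \frac{1}{N}\sum_{\zeta^N = 1} f(\zeta z)\cdot N\delta_{\zeta,1} = f(z),
\end{equation*}
completing the verification and hence the proof.
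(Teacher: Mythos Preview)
Your argument is correct and is the standard direct verification. The paper itself does not supply a proof of this proposition; it simply records the statement with a citation to \cite{MR2277210}, so there is nothing to compare your approach against.

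One small tightening: you hedge on boundedness of $S_i$, but this is automatic from the hypothesis. Taking $i=j$ in \eqref{eq4.1.1} gives $\sum_{w^N=z}|m_i(w)|^2=N$ for a.e.\ $z$, hence $|m_i(w)|^2\leq N$ for a.e.\ $w$, so $m_i\in L^\infty(\bt)$ and $S_i$ is bounded a priori. Also, with your convention $M(z)=\frac{1}{\sqrt N}\bigl(m_i(w)\bigr)$ (rows indexed by $i$, columns by $w$), the QMF condition literally reads $M(z)M(z)^*=I$ rather than $M(z)^*M(z)=I$; of course for a square matrix the two are equivalent, so your conclusion $\sum_{i}m_i(w)\cj{m_i(w')}=N\delta_{w,w'}$ is unaffected.
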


  \begin{theorem}\label{th4.3}
  Let $(S_i)_{i\in\bz_N}$ be the wavelet representation associated to a QMF system $(m_i)_{i\in\bz_N}$, where all the functions $m_i$ are trigonometric polynomials. Then the following statements are equivalent:
  
  \begin{enumerate}
	\item The representation has an atom. 
	\item There exist $i_0,\dots, i_{p-1}$ in $\bz_N$, $p\geq 1$, such that for all $j=0,\dots,p-1$, we have $m_{i_j}(z)=a_jz^{k_j}$ for some $a_j\in\bt$, $k_j\in\bz$ and $k_0+Nk_1+\dots N^{p-1}k_{p-1}$ is a multiple of $N^p-1$. 
\end{enumerate}
  
  \end{theorem}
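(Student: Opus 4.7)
The strategy is to pass from atoms to unitary parts of the isometries $S_I$ via Remark \ref{rem3.5}, and then to exploit the trigonometric-polynomial structure of the filters $m_i$ through a Fourier-degree comparison.

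For (ii)$\Rightarrow$(i), when each $m_{i_j}$ is the monomial $a_jz^{k_j}$ one computes
$$m_I(z):=\prod_{j=0}^{p-1}m_{i_j}(z^{N^j})=az^k, \qquad a\in\bt,\ k=k_0+Nk_1+\cdots+N^{p-1}k_{p-1}.$$
The iterates of $S_If(z)=az^k f(z^{N^p})$ have ranges $\Span\{z^m : m\equiv k(1+N^p+\cdots+N^{(n-1)p})\pmod{N^{np}}\}$, and the divisibility $N^p-1\mid k$ is exactly what makes $m=-k/(N^p-1)$ a common integer solution of all of these congruences. Hence $\bigcap_n S_I^nL^2(\bt)$ is spanned by $z^{-k/(N^p-1)}$, so $P(\underline I)\H=\Unitary(S_I)\neq 0$.

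For (i)$\Rightarrow$(ii), one first reduces to cyclic atoms. Let $d$ be the maximum absolute Fourier degree of the $m_i$'s and put $D:=\lceil d/(N-1)\rceil$. Formula \eqref{eq4.2.1} shows that the finite-dimensional subspace $M$ of trigonometric polynomials with Fourier support in $[-D,D]$ is $S_i^*$-invariant, and that for any trigonometric polynomial $f$ one has $S_I^*f\in M$ once $|I|$ is large enough (since $S_I^*$ shrinks the Fourier support by a factor $N^{|I|}$ up to a bounded additive error). Combined with the Cuntz decomposition $f=\sum_{|I|=n}S_IS_I^*f$, this puts every trigonometric polynomial in $\vee_n\alpha^n(P_M)\H$, and by density $M$ is cyclic. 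Theorem \ref{th2.6} now rules out non-periodic atoms and forces every cyclic atom $P(\underline I)\H$ into $M$, so atomic vectors are trigonometric polynomials.

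Fix such a cyclic atom at $\omega=\underline I$, $I=i_0\cdots i_{p-1}$. By Lemma \ref{lem2.7.1} the isometry $S_I$ is unitary on the finite-dimensional space $P(\omega)\H$ and therefore admits an eigenvector $0\neq f\in P(\omega)\H$ satisfying $m_I(z)f(z^{N^p})=\lambda f(z)$ with $|\lambda|=1$. Comparing the minimum and maximum Fourier exponents on both sides and using $1-N^p<0$ forces both $f(z)=cz^a$ and $m_I(z)=dz^k$ to be monomials, with $k=a(1-N^p)$; in particular $N^p-1\mid k$. Finally, write $m_I(z)=m_{i_0}(z)\cdot Q(z^N)$ with $Q(w):=\prod_{j=1}^{p-1}m_{i_j}(w^{N^{j-1}})$; grouping Fourier exponents modulo $N$ the monomiality of $m_I$ forces the support of $m_{i_0}$ into a single residue class mod $N$, so $m_{i_0}(z)=z^{r_0}\tilde p(z^N)$ and $\tilde p(w)Q(w)$ is a Laurent monomial in $w$. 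Since the units of $\bc[w,w^{-1}]$ are precisely the monomials, both $\tilde p$ and $Q$ are monomials; hence $m_{i_0}=a_0z^{k_0}$, and induction on $p$ (applied to $Q$, the analogous product for the shorter word $i_1\cdots i_{p-1}$) yields $m_{i_j}=a_jz^{k_j}$ for each $j$, with $k=k_0+Nk_1+\cdots+N^{p-1}k_{p-1}$. The main technical step is the cyclicity of $M$, which rests on the Fourier-degree contraction of $S_I^*$.
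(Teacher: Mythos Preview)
Your proof is correct and follows the same global strategy as the paper's: reduce to cyclic atoms via Theorem~\ref{th2.6}, pass to the eigenvector equation $m_I(z)f(z^{N^p})=\lambda f(z)$ for $S_I$ on the (finite-dimensional) atom, and deduce that $m_I$ and each $m_{i_j}$ are monomials with the required divisibility.

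The differences are in the supporting steps. The paper obtains the finite-dimensional cyclic $(S_i^*)$-invariant subspace $M$ by citing \cite[Proposition~3.1, Corollary~3.3]{BEJ00}; you instead build $M$ explicitly as the trigonometric polynomials of bounded Fourier degree and verify cyclicity by the contraction estimate for $S_I^*$ on Fourier supports. The paper then invokes \cite[Theorem~3.1]{BrJo97} to conclude that $|m_I|\equiv 1$ on $\bt$ and that the atom is one-dimensional with an eigenvector, and uses a short complex-analysis argument (via $P(z)\overline{P(1/\bar z)}\equiv 1$) to force $m_I$ to be a monomial; you instead extract an eigenvector directly from the unitarity of $S_I$ on the finite-dimensional atom and use an elementary comparison of minimal and maximal Fourier exponents in $m_I(z)f(z^{N^p})=\lambda f(z)$ to force both $f$ and $m_I$ to be monomials. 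Finally, the paper passes from $m_I$ monomial to each $m_{i_j}$ monomial by noting that $m_{i_j}(z^{N^j})$ divides $m_I$; your residue-class-mod-$N$ plus units-in-$\bc[w,w^{-1}]$ argument achieves the same thing inductively. Your route is more self-contained and entirely elementary (no outside citations, no complex analysis), at the modest cost of carrying the explicit degree bookkeeping; the paper's route is shorter on the page because it outsources the two technical steps to \cite{BEJ00} and \cite{BrJo97}.
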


  \begin{proof}
  Suppose the representation has an atom. By \cite[Proposition 3.1, Corollary 3.3]{BEJ00}, there exists a finite set $H$ in $\bz$ such that the space $M=\Span\{z^h : h\in H\}$ is cyclic and invariant.
  
 By Theorem \ref{th2.6}, there is a cycle word $\omega=\underline I$ with $P(\omega)\neq0$, and any vector $v$ in the atom $P(\omega)L^2(\bt)$ is contained also in $M$. The isometry $S_I$ has the form 
 $$S_If(z)=m_{i_0}(z)m_{i_1}(z^N)\dots m_{i_{p-1}}(z^{N^{p-1}})f(z^{N^p}),\quad (f\in L^2(\bt),z\in\bt).$$
 The fact that $v$ is in the atom $P(\omega)L^2(\bt)$ means that the isometry $S_I$ has a non-trivial unitary part in its Wold decomposition. Then, with \cite[Theorem 3.1]{BrJo97}, we obtain that for $$m(z):=m_{i_0}(z)m_{i_1}(z^N)\dots m_{i_{p-1}}(z^{N^{p-1}}),$$ we have $|m(z)|=1$ for all $z\in\bt$, the atom is 1-dimensional and $P(\omega)\H=\bc v$, where $v\in L^2(\br)$ satisfies 
 \begin{equation}
m(z)v(z^{N^p})=\lambda v(z),
\label{eq4.3.1}
\end{equation}
  for some $\lambda\in\bt$. 
 
 First, we show that $m(z)=az^l$ for some $a\in\bc$, $l\in \bz$. We know that $m$ is a trigonometric polynomial. Take $r\in\bz$ large enough so that $P(z):=z^rm(r)$ is a polynomial. We have $|P(z)|=1$ for $|z|=1$. Then, the function $R(z)=P(z)\cj{P(1/\cj z)}$ is an entire meromorphic function and $R(z)=1$ for $z\in\bt$. Then $R(z)=1$ for all $z\in\bc$ so $P(z)=\frac{1}{\cj{P(1/\cj z)}}$.  But since $P(z)$ is a polynomial, this implies that the only zero for $P(z)$ is $0$. Thus $m(z)=az^l$ for some $a\in\bc$, $l\in\bz$. Since the trig polynomials $m_{i_j}(z^{N^j})$ divide $m(z)$ it follows that $m_{i_j}(z)=a_jz^{k_j}$ for some $a_j\in\bc$ and $k_j\in\bz$. Moreover, the QMF conditions imply that $a_j\in\bt$. 
 
 Since the vector $v$ is in $M$, it is a trigonometric polynomial, $v=\sum_{j=e}^dv_jz^d$, and we can assume $v_d\neq 0$. Using \eqref{eq4.3.1} and equating the larges powers, we get $l+dN^p=d$ so $l=d(1-N^p)$.
 Also, the formula for $m$ shows that $l=k_0+Nk_1+\dots+N^{p-1}k_{p-1}$ and this implies (ii).
 
 For the converse let $l=k_0+Nk_1+\dots+N^{p-1}k_{p-1}=d(1-N^p)$. Then, with $m$ as above, and $v=z^d$, and $\lambda=a_{k_0}\dots a_{k_{p-1}}$, we have that \eqref{eq4.3.1} is satisfied. This means that $S_Iv=\lambda v$, iterating, we get that $S_I^kv=\lambda^kv$ so $v\in P(\omega)\H$.

  \end{proof}

  \begin{corollary}\label{cor4.4}
  If none of the trigonometric polynomials in a QMF system is of the form $az^k$, $a\in\bc$, $k\in\bz$, then the associated wavelet representation has no atoms. In particular, if the low-pass condition $m_0(1)=\sqrt{N}$ is satisfied then the wavelet representation has no atoms.
    \end{corollary}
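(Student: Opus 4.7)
The first assertion is immediate from Theorem \ref{th4.3}. Condition (ii) of that theorem requires the existence of at least one index $i\in\bz_N$ (for instance, $i_0$ when $p=1$) such that $m_i$ is a monomial $az^k$. If the hypothesis rules out any $m_i$ having this form, then (ii) fails, and so by the equivalence (i)$\Leftrightarrow$(ii) the representation cannot have an atom.

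For the low-pass case, the plan is to show that $m_0(1)=\sqrt{N}$ already rules out every $m_i$ being a monomial, and then invoke the first part. The crucial tool is the pointwise ``unitarity'' identity
\begin{equation*}
\sum_{i\in\bz_N} |m_i(w)|^2 = N,\quad(w\in\bt),
\end{equation*}
which I would derive from the QMF relations \eqref{eq4.1.1} as follows: fix $z\in\bt$, enumerate $\{w_1,\dots,w_N\}=\{w:w^N=z\}$, and observe that \eqref{eq4.1.1} says that the rows of the matrix $\tfrac{1}{\sqrt N}(m_i(w_k))_{i,k}$ are orthonormal; hence so are its columns, which gives the identity above at each $w_k$, and hence at every $w\in\bt$.

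Next I would note that the diagonal QMF condition $\tfrac{1}{N}\sum_{w^N=z}|m_i(w)|^2=1$ forces $|a_i|=1$ whenever $m_i(z)=a_iz^{k_i}$ is a monomial. Now evaluate the unitarity identity at $w=1$: since $|m_0(1)|^2=N$, every other term must vanish, so $m_i(1)=0$ for all $i\neq 0$. This rules out $m_i$ being a monomial for $i\neq 0$ (else $|m_i(1)|=|a_i|=1\neq 0$), and rules it out for $i=0$ as well (else $|m_0(1)|=|a_0|=1\neq\sqrt{N}$, since $N\geq 2$). With no $m_i$ a monomial, the first part of the corollary applies and the wavelet representation has no atoms.

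There is no real obstacle here, since once the pointwise QMF identity $\sum_i|m_i(w)|^2=N$ is in hand, everything reduces to a direct evaluation at $w=1$; the only point requiring a moment's care is remembering that $m_0$ itself must be excluded as a monomial, which needs the strict inequality $\sqrt{N}>1$ guaranteed by $N\geq 2$.
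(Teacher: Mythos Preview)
Your proof is correct and follows the same route as the paper's: both derive $m_i(1)=0$ for $i\neq 0$ from the low-pass condition and then appeal to the first part. You spell out two points the paper leaves implicit---the column-orthonormality identity $\sum_i|m_i(w)|^2=N$ and the separate check that $m_0$ itself cannot be a monomial (since $|a_0|=1<\sqrt{N}$)---so your version is actually a bit more complete.
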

  
  \begin{proof}
  This follows directly from Theorem \ref{th4.3}. If the low-pass condition is satisfied, then we also have $m_i(1)=0$ for all $i\neq 0$, so none of the functions can be of the form $az^k$. 
  \end{proof}
  
  \subsection{Finitely correlated states}

  \begin{theorem}\label{th5.1}\cite{BJKW00}
  Consider two representation $(S_i)_{i\in\bz_N}$, $(\tilde S_i)_{i\in\bz_N}$ of $\O_N$ on the Hilbert spaces $\H$, $\tilde\H$. Suppose each has a cyclic invariant (for the $S_i^*$, $\tilde S_i^*$ respectively) subspace $M$ and $\tilde M$ and let $V_i^*=S_i^*P_M$, $\tilde V_i^*=\tilde S_i^*P_{\tilde M}$. There is an isometric linear isomorphism between intertwiners $U:\H\rightarrow \tilde \H$, i.e., operators satisfying 
  $$US_i=\tilde S_iU,\quad (i\in\bz_N)$$
  and operators $V:M\rightarrow \tilde M$ such that 
  \begin{equation}
\sum_{i\in\bz_N}\tilde V_iVV_i^*=V;
\label{eq5.1.1}
\end{equation}
  given by the map $U\mapsto V=P_{\tilde M}UP_M$. 
  
  The operators $V_i$ satisfy the following relation (on $M$):
  \begin{equation}
\sum_{i\in\bz_N}V_iV_i^*=I
\label{eq5.1.2}
\end{equation}
  \end{theorem}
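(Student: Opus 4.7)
The argument has three main pieces. First, $\sum_i V_iV_i^*=I$ on $M$ is immediate: since $M$ is invariant under $S_i^*$, $V_i^*=S_i^*P_M=P_MS_i^*P_M$, so $V_iV_i^*=P_MS_iP_MS_i^*P_M=P_MS_iS_i^*P_M$, and summing gives $P_M\bigl(\sum_iS_iS_i^*\bigr)P_M=P_M$, which acts as the identity on $M$.

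Next, given an intertwiner $U$, I would verify $V=P_{\tilde M}UP_M$ satisfies the stated relation. Combining $US_i=\tilde S_iU$ with $\sum_iS_iS_i^*=I$ yields $U=\sum_i\tilde S_iUS_i^*$, and the invariance of $\tilde M$ under $\tilde S_i^*$ is equivalent to $P_{\tilde M}\tilde S_iP_{\tilde M}^\perp=0$, hence $P_{\tilde M}\tilde S_i=P_{\tilde M}\tilde S_iP_{\tilde M}$. For $v\in M$, $S_i^*v\in M$, so
\begin{equation*}
Vv=P_{\tilde M}Uv=\sum_iP_{\tilde M}\tilde S_iUS_i^*v=\sum_iP_{\tilde M}\tilde S_iP_{\tilde M}UP_MS_i^*v=\sum_i\tilde V_iVV_i^*v.
\end{equation*}

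The converse is the heart of the matter. Given $V$ satisfying the relation, define $L_m:M\to\tilde\H$ by $L_mv:=\sum_{|I|=m}\tilde S_IVS_I^*v$, noting $S_I^*v\in M$ by iterated invariance. Expanding $\tilde S_K^*\tilde S_{Kj}=\tilde S_j$ and applying the hypothesis to each $w=S_K^*v\in M$ yields the key identity $\alpha^{m-1}(P_{\tilde M})L_mv=L_{m-1}v$; hence the increments $L_mv-L_{m-1}v$ lie in the mutually orthogonal subspaces $\alpha^m(P_{\tilde M})\tilde\H\ominus\alpha^{m-1}(P_{\tilde M})\tilde\H$. The uniform bound $\|L_mv\|^2=\sum_{|I|=m}\|VS_I^*v\|^2\le\|V\|^2\|v\|^2$ then makes these increments square-summable, so $Wv:=\lim_mL_mv$ defines $W:M\to\tilde\H$ with $\|W\|\le\|V\|$, $P_{\tilde M}W=V$, and $W=\sum_i\tilde S_iWV_i^*$ on $M$. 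This fixed-point identity makes $U\bigl(\sum_{|I|=n}S_Iv_I\bigr):=\sum_{|I|=n}\tilde S_IWv_I$ consistent across all levels $n$, and orthogonality of the $\tilde S_I$-ranges at fixed length gives $\|U\phi\|\le\|W\|\,\|\phi\|$; cyclicity of $M$ extends $U$ uniquely to all of $\H$, with $US_i=\tilde S_iU$ by construction.

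For the isometric bijection, $\|V\|\le\|U\|$ follows from $V=P_{\tilde M}UP_M$ and $\|U\|\le\|V\|$ from the construction, so the map is isometric, and it is surjective by construction. For injectivity, if $P_{\tilde M}UP_M=0$ for some intertwiner $U$, then $W:=U|_M$ takes values in $\tilde M^\perp$ and satisfies $W=\sum_i\tilde S_iWS_i^*$; iterating and projecting onto $\alpha^{n-1}(P_{\tilde M})\tilde\H$ shows $\alpha^{n-1}(P_{\tilde M})Wv=0$ for every $n$, so cyclicity of $\tilde M$ (i.e., $\vee_n\alpha^n(P_{\tilde M})=I$) forces $W=0$, after which cyclicity of $M$ yields $U=0$. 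The main obstacle is the convergence of $L_m$ and the verification that $W$ satisfies the precise fixed-point relation needed to make $U$ consistent across the levels $\alpha^n(P_M)\H$ — this is exactly where the hypothesis $\sum_i\tilde V_iVV_i^*=V$ enters and cannot be weakened to boundedness alone.
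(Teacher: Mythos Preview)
The paper does not prove this theorem; it is quoted directly from \cite{BJKW00} and stated without proof. So there is no ``paper's own proof'' against which to compare your attempt.

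Your argument is correct and follows the standard route one would expect for this result. The identity $\sum_iV_iV_i^*=I_M$ and the verification that $V=P_{\tilde M}UP_M$ satisfies \eqref{eq5.1.1} are routine. The substantive part --- constructing $U$ from $V$ --- is handled correctly: the telescoping identity $\alpha^{m-1}(P_{\tilde M})L_m=L_{m-1}$ is exactly what the hypothesis \eqref{eq5.1.1} gives you, the orthogonality of the increments together with the uniform bound $\|L_mv\|\le\|V\|\,\|v\|$ forces convergence of $L_mv$, and the fixed-point relation $W=\sum_i\tilde S_iWV_i^*$ is precisely what makes the level-$n$ definition of $U$ consistent. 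Your injectivity argument (iterating to show $\alpha^n(P_{\tilde M})U|_M=0$ for all $n$, then invoking cyclicity of $\tilde M$) is also correct, and the two inequalities $\|V\|\le\|U\|$ and $\|U\|\le\|W\|\le\|V\|$ close the loop for the isometry claim. One small remark: in your injectivity paragraph you write $W=\sum_i\tilde S_iWS_i^*$, which on $M$ is the same as $\sum_i\tilde S_iWV_i^*$; this is fine but worth making explicit.
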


\begin{theorem}\label{th2.4}
Let $(S_i)_{i\in\bz_N}$ be a representation of $\O_N$ on a Hilbert space $\H$ and let $P$ be the associated projection valued measure. Suppose $M$ is a finite dimensional subspace of $\H$ invariant for all $S_i^*$, $i\in\bz_N$ and let $\{e_i: i=1,n\}$ be an orthonormal basis for $M$. Write the vectors $S_l^*e_i$ in this orthonormal basis
\begin{equation}
S_l^*e_i=\sum_{j=1}^n\alpha_{ji}^le_j.
\label{eq2.4.1}
\end{equation}
Define the matrices $Z_l=(\alpha_{ij}^l)_{i,j=1}^n$. Define, for each Borel subset $B$ of $\K$, the matrix
\begin{equation}
\mu_M(B):=(\ip{P(B)e_i}{e_j})_{i,j=1}^n.
\label{eq2.4.2}
\end{equation}
Then 
\begin{equation}
\sum_{l\in\bz_N}Z_l^*Z_l=I.
\label{eq2.4.3}
\end{equation}

For all Borel sets $B$ in $\K$
\begin{equation}
\mu_M(B)=\sum_{l\in\bz_N}Z_l^*\mu_M(\sigma_l^{-1}(B))Z_l.
\label{eq2.4.3.0}
\end{equation}
For all finite words $i_1\dots i_m$ in $\bz_N^m$, 
\begin{equation}
\mu_M(B)=Z_I^*Z_I,
\label{eq2.4.4}
\end{equation}
with the usual notation $Z_I:=Z_{i_1}\dots Z_{i_m}$. 
\end{theorem}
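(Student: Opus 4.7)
The three identities all flow from the Cuntz relations and the covariance property \eqref{eq0.2.3.2} of the projection-valued measure $P$. My plan is to establish them in the order given, noting that \eqref{eq2.4.3} is the specialization $B=\K$ of \eqref{eq2.4.3.0}, and \eqref{eq2.4.4} then follows by induction from \eqref{eq2.4.3.0}.

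First, for \eqref{eq2.4.3}, I would apply $\sum_{l}S_lS_l^{*}=I$ to the basis vector $e_i$ and pair the result with $e_j$. Using $(S_l^{*})$-invariance of $M$ together with the expansion \eqref{eq2.4.1}, the chain of equalities
\begin{equation*}
\delta_{ij}=\ip{e_i}{e_j}=\sum_{l}\ip{S_l S_l^{*}e_i}{e_j}=\sum_{l}\ip{S_l^{*}e_i}{S_l^{*}e_j}=\sum_{l}\sum_{k}\alpha_{ki}^{l}\overline{\alpha_{kj}^{l}}
\end{equation*}
reads exactly as the $(i,j)$-entry of $\sum_{l}Z_l^{*}Z_l=I$.

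For the recursion \eqref{eq2.4.3.0}, I would start from the covariance identity \eqref{eq0.2.3.2}, $P(B)=\sum_{l}S_l P(\sigma_l^{-1}(B))S_l^{*}$, and compute $\ip{P(B)e_i}{e_j}$ by moving each $S_l$ to the opposite side of the inner product. The summand becomes $\ip{P(\sigma_l^{-1}(B))S_l^{*}e_i}{S_l^{*}e_j}$; since $S_l^{*}e_i,S_l^{*}e_j\in M$ by invariance, this is an entry of $\mu_M(\sigma_l^{-1}(B))$ conjugated by the coordinate vectors of $S_l^{*}e_i,S_l^{*}e_j$ in the basis $\{e_i\}$, and it assembles into $Z_l^{*}\mu_M(\sigma_l^{-1}(B))Z_l$ at the matrix level.

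The final identity \eqref{eq2.4.4} I would then obtain by induction on $|I|$, iterating \eqref{eq2.4.3.0} on cylinder sets. The base case is the empty word, where $\C$ of the empty word equals $\K$ and $\mu_M(\K)=I$ matches the empty-product convention for $Z_{I}$. For the inductive step, observe that $\sigma_l^{-1}(\C(i_1\ldots i_m))=\C(i_2\ldots i_m)$ when $l=i_1$ and is empty otherwise, so only one term survives in the sum and \eqref{eq2.4.3.0} collapses to $\mu_M(\C(I))=Z_{i_1}^{*}\mu_M(\C(i_2\ldots i_m))Z_{i_1}$, which unfolds to $Z_I^{*}Z_I$ by the inductive hypothesis. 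The one delicate point — not really an obstacle — is bookkeeping the order of factors, since the composition $S_I^{*}=S_{i_m}^{*}\cdots S_{i_1}^{*}$ stacks the adjoints in the reverse of the reading order of $I$, and this has to be reconciled carefully with the convention $Z_I=Z_{i_1}\cdots Z_{i_m}$.
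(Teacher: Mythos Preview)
Your proposal is correct and follows essentially the same route as the paper: use $\sum_l S_lS_l^*=I$ for \eqref{eq2.4.3}, the covariance identity \eqref{eq0.2.3.2} together with the expansion \eqref{eq2.4.1} for the recursion \eqref{eq2.4.3.0}, and then specialize to cylinders and induct for \eqref{eq2.4.4}. Your caution about the order of the factors in $Z_I$ is well placed---the recursion indeed produces $Z_{i_1}^*\bigl(\cdots\bigr)Z_{i_1}$ with the outer indices coming first, so matching this to the convention $Z_I=Z_{i_1}\cdots Z_{i_m}$ requires exactly the careful bookkeeping you flag; the paper handles this step tersely (``by induction'') and your more explicit account is, if anything, clearer on that point.
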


\begin{proof}
Apply $S_l$ to \eqref{eq2.4.1}, sum over $l\in\bz_N$ and use the Cuntz relation. This yields \eqref{eq2.4.3}.

For a Borel set $B$ in $\K$, we have 
$$\ip{P(B)e_i}{e_j}=\sum_{l\in\bz_N}\ip{P(B\cap\C(l))e_i}{e_j}=\sum_{l\in\bz_N}\ip{P(\sigma_l\sigma_l^{-1}(B))e_i}{e_j}$$$$=\sum_{l\in\bz_N}\ip{S_lP(\sigma_l^{-1}(B))S_l^*e_i}{e_j}
=\sum_{l\in\bz_N}\ip{P(\sigma_l^{-1}(B))S_l^*e_i}{S_l^*e_j}$$$$=\sum_{l\in\bz_N}\sum_{i',j'=1}^n\alpha_{i'i}^l\cj{\alpha_{j'j}^l}\ip{P(\sigma_l^{-1}(B))e_{i'}}{e_{j'}}.$$
This implies \eqref{eq2.4.3.0}. The relation \eqref{eq2.4.4} follows from \eqref{eq2.4.3.0}, taking $B$ to be a cylinder and using induction.

\end{proof}

%
%

%
%
  
  \begin{definition}\label{def5.2}
  A representation of $\O_N$ on a Hilbert space $\H$ is called {\it generic} if there exists a cyclic vector $\psi$ in $\H$ with $\|\psi\|=1$ and constants $z_i\in\bc$, $i\in\bz_N$ such that 
  $$S_i^*\psi=z_i\psi,\quad(i\in\bz_N).$$
  Note that in this case, from the Cuntz relation, we obtain that 
  $$\sum_{i\in\bz_N}|z_i|^2=1.$$
  
  \end{definition}
  
  \begin{proposition}\label{pr5.3}
     The following statements hold:
  \begin{enumerate}
	\item Every generic representation of $\O_N$ is irreducible. 
	\item Two generic representations of $\O_N$, with constants $z=(z_i)_{i\in\bz_N}$ and $\tilde z=(\tilde z_i)_{i\in\bz_N}$ are equivalent if the corresponding vectors $z$ and $\tilde z$ are proportional, $z=\lambda z$, $|\lambda|=1$; they are disjoint otherwise. 
	\item For a generic representation of $\O_N$ with vector $\psi$ and constants $(z_i)_{i\in\bz_N}$, let $P$ be the associated projection valued measure and let $\mu_\psi$ be the measure on $\K_N$ defined by
	$$\mu_\psi(B)=\ip{P(B)\psi}{\psi},\quad (B\in \B(\K_N)).$$
	Then, the measure $\mu_\psi$ satisfies the following invariance equation
	\begin{equation}
\mu_\psi(B)=\sum_{l\in\bz_N}|z_l|^2\mu_\psi(\sigma_l^{-1}(B)),\quad(B\in\B(\K_N)).
\label{eq5.3.1}
\end{equation}
\item A generic representation of $\O_N$ has an atom if and only if $|z_i|=1$ for one of $i\in\bz_N$. In this case, the representation is purely atomic, supported on $\{w\underline i : w\mbox{ finite word }\}$, and it is permutative iff $z_i=1$. 

\end{enumerate}
  
  \end{proposition}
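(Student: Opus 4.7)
The plan is to take $M = \bc\psi$ as a one-dimensional, cyclic, $(S_i^*)$-invariant subspace (invariance is immediate from $S_i^*\psi = z_i\psi$) and to feed it into Theorem~\ref{th5.1}. With that choice, the operators $V_i^* = S_i^*P_M$ act on $M$ as scalar multiplication by $z_i$, and similarly $\tilde V_i^*$ as multiplication by $\tilde z_i$ on $\tilde M$. The intertwining equation $\sum_i \tilde V_i V V_i^* = V$ then reduces to the one-variable equation $\bigl(\sum_i z_i\bar{\tilde z}_i\bigr)V = V$, and Theorem~\ref{th5.1} identifies intertwiners $\H \to \tilde\H$ with its scalar solutions $V$.

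For (i) I would take $(\tilde S_i) = (S_i)$, so the constraint becomes $\bigl(\sum|z_i|^2\bigr)V = V$, trivially satisfied by every scalar; hence the commutant is one-dimensional and Schur's lemma yields irreducibility. For (ii), a nonzero intertwiner requires $\sum z_i\bar{\tilde z}_i = 1$, and Cauchy--Schwarz applied to the unit vectors $(z_i),(\tilde z_i)\in\ell^2(\bz_N)$ forces $z$ and $\tilde z$ to be proportional with $|\lambda| = 1$; given such a nonzero intertwiner, irreducibility of both representations makes $U^*U$ a scalar, so after normalization $U$ is unitary, yielding equivalence --- otherwise disjointness. Part (iii) is a one-line substitution into Corollary~\ref{lem2.7}: $\mu_{S_i^*\psi} = |z_i|^2\mu_\psi$ because $S_i^*\psi = z_i\psi$.

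Part (iv) is the main obstacle. The easy direction uses Remark~\ref{rem3.5}: $|z_i| = 1$ together with $\sum_j |z_j|^2 = 1$ forces $z_j = 0$ for $j \neq i$, so $(S_i^*)^n\psi = z_i^n\psi$ has constant norm one, placing $\psi \in \bigcap_n S_i^n(S_i^*)^n\H = P(\underline i)\H$; cyclicity of $\psi$ together with Proposition~\ref{pra.1} then delivers purely atomic support on $\Orbit(\underline i) = \{w\underline i : w \text{ finite}\}$. The harder converse leans on part (iii): iterating \eqref{eq5.3.1} yields $\mu_\psi(\{\omega\}) = |z_{\omega_1}|^2\cdots|z_{\omega_n}|^2\mu_\psi(\{\sigma^n(\omega)\})$, so if all $|z_i| < 1$ the product decays geometrically, ruling out non-periodic atoms, and for a periodic atom $\omega = \underline I$ of period $p$ the identity $\mu_\psi(\{\omega\}) = |z_I|^2\mu_\psi(\{\omega\})$ forces $|z_I| = 1$, which via $\sum|z_i|^2 = 1$ collapses $I$ to a constant word $i^p$ with $|z_i| = 1$, so $\omega = \underline i$. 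Finally, permutativity follows from Theorem~\ref{th1.5}: since $P(\underline i)\H = \bc\psi$ is one-dimensional, solving $\mu z_i = 1$ from $S_i^*S_i\psi = \psi$ yields $S_i\psi = \bar z_i\psi$, and a unit basis vector of $P(\underline i)\H$ is $S_i$-invariant iff $\bar z_i = 1$, i.e., $z_i = 1$.
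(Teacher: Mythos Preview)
Your treatment of (i)--(iii) is essentially the paper's own argument: Theorem~\ref{th5.1} with the one-dimensional $M=\bc\psi$ reduces the commutant and the intertwiners to scalars satisfying the single equation you wrote, and (iii) is exactly the one-dimensional case of Theorem~\ref{th2.4} (or Corollary~\ref{lem2.7}, as you say).

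For (iv) you take a genuinely different route from the paper. The paper argues structurally: by irreducibility and Proposition~\ref{pr2.10} any atom makes the representation purely atomic, and then Corollary~\ref{cor4.18} forces every cyclic atom $P(\underline I)\H$ to sit inside $M=\bc\psi$; unitarity of $S_I$ on that line gives $|z_I|=1$, whence the collapse to a single letter. You instead work directly with the scalar measure $\mu_\psi$ and the recursion of (iii), deriving $\mu_\psi(\{\omega\})=|z_{\omega_1}\cdots z_{\omega_n}|^2\,\mu_\psi(\{\sigma^n\omega\})$ and reading off the constraint $|z_I|=1$. Your approach is more elementary in that it bypasses Theorem~\ref{th2.6}; the paper's approach is cleaner in that it never has to touch the measure.

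There is one genuine, though easily repairable, gap in your (iv). The statement concerns atoms of the projection-valued measure $P$, i.e.\ points with $P(\{\omega\})\neq 0$, whereas your decay argument only shows $\mu_\psi(\{\omega\})=\|P(\{\omega\})\psi\|^2=0$. These are not the same: $\psi$ could in principle be orthogonal to a nontrivial $P(\{\omega\})\H$. To close the gap you need one sentence: if $P$ has an atom $\omega_0$, then by (i) and Proposition~\ref{pr2.10} the representation is purely atomic on $\Orbit(\omega_0)$, so $\psi=\sum_{\omega'\in\Orbit(\omega_0)}P(\{\omega'\})\psi$ forces $\mu_\psi(\{\omega'\})>0$ for some $\omega'$, and your recursion applied to that $\omega'$ finishes the job. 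Once this bridge is in place, note also that your ``non-periodic'' clause is superfluous---the geometric decay under $\max_i|z_i|<1$ already kills \emph{all} $\mu_\psi$-atoms at once.
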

  
  \begin{proof}
  The first two statements follow from Theorem \ref{th5.1}: the space $M$ spanned by the vector $\psi$ is cyclic, one-dimensional and invariant for the $S_i^*$'s. The commutant of the representation is in 1-1 correspondence with numbers $V$ such that $\sum_{i\in\bz_N}|z_i|^2V=V$, but those are all the numbers in $\bc$. 
  
  For two such generic representations, the intertwiners are in 1-1 correspondence with numbers $V$ such that
  $$V\sum_{i\in\bz_N}z_i\cj{\tilde z_i}=V.$$
  
  To have non-zero solutions, we must have that $\sum _i z_i\cj{\tilde z_i}=1$ and this means we have equality in the Schwarz inequality so the vectors $z$ and $\tilde z$ are proportional.

  (iii) follows from Theorem \ref{th2.4}. If the representation has an atom then, by (i) and Proposition \ref{pr2.10} it is purely atomic. By Corollary \ref{cor4.18} every atom has to be periodic and every atom which is a cycle $\underline I$ has to be contained in the span of $\psi$. Then $S_I^*$ has to be unitary on $\psi$ so $|z_{i_1}\dots z_{i_p}|=1$. Since $\sum_i|z_i|^2=1$ this implies that all the $i_k$ are equal to some $i\in\bz_N$ and $|z_i|=1$. Therefore $|z_j|=0$ for $j\neq 0$. The representation is supported on $\{w\underline i : w\mbox{ finite word }\}$. 
  
  If $|z_i|=1$ then $\underline i$ is an atom and the representation is supported on its orbit (by Proposition \ref{pr2.10}). 
  
  If $z_i=1$ then $\{S_{w\underline 1}\psi : w\mbox{ finite word }\}$ is a permutative orthonormal basis. Conversely, if the representation is permutative, then by Theorem \ref{th1.5}, since the cycle $\underline i$ is the span of $\psi$, we must have $S_i^*\psi=\psi$ so $z_i=1$. 
  \end{proof}

\begin{remark}\label{rem2.6}
It follows that the measure $\mu_\psi$ is the infinite product measure corresponding to the weights form \eqref{eq5.3.1} on the letters in $\bz_N$. As a result, pairs of distinct weights on $\bz_N$ yield pairs of mutually singular measures $\mu_\psi$ on $\K_N$ (by Kakutani).

\end{remark}

\begin{example}
Consider the wavelet representation associated to the Haar filters $m_0(z)=\frac{1+z}{\sqrt2}$, $m_1(z)=\frac{1-z}{\sqrt2}$, $z\in\bt$, $N=2$. 

Then 
$$S_0^*1(z)=\frac12\sum_{w^2=z}\frac{1+\cj{w}}{\sqrt2}=\frac1{\sqrt2},\quad S_1^*1(z)=\frac12\sum_{w^2=z}\frac{1-\cj{w}}{\sqrt2}=\frac1{\sqrt2}.$$
Then, by Corollary \ref{pr5.3}, the measure $\mu_1$ satisfies the invariance equation
$$\mu_1(B)=\frac12(\mu_1(\sigma_0^{-1}(B))+\mu_1(\sigma_1^{-1}(B))),\quad(B\in\B(\K_2)).$$
But this means that $\mu_1$ is the Haaar measure on $\K_2$.

It follows that this measure pulls back to become the Lebesgue measure on the unit interval, given as an iterated function system measure. 
\end{example}

\begin{example}
Consider the wavelet representation associated to the Cantor filters $m_0(z)=\frac{1+z^2}{\sqrt2}$, $m_1(z)=z$, $m_2(z)=\frac{1-z^2}{\sqrt{2}}$, $z\in\bt$, $N=3$. See \cite{DuJo06a}. Then 
$$S_0^*1(z)=\frac13\sum_{w^3=z}\frac{1+\cj w^2}{\sqrt 2}=\frac1{\sqrt2},\quad S_1^*1(z)=\frac13\sum_{w^3=z}\cj w=0,\quad S_2^*1(z)=\frac13\sum_{w^3=z}\frac{1-\cj w^2}{\sqrt2}=\frac1{\sqrt2}.$$
Then, by Corollary \ref{pr5.3}, the measure $\mu_1$ satisfies the invariance equation
$$\mu_1(B)=\frac12(\mu_1(\sigma_0^{-1}(B))+\mu_1(\sigma_2^{-1}(B))),\quad(B\in\B(\K_3)).$$
But this means that $\mu_1$ is the middle-third-Cantor measure on $\B(\K_3)$.

It follows that this measure pulls back to become the middle-third  Cantor measure with its support on the middle-third Cantor set, given as an iterated function system measure.

\end{example}

\begin{definition}\label{def7.10}
Let $(V_i)_{i\in\bz_N}$ be some operators on a Hilbert space $M$, satisfying the relation
\begin{equation}
\sum_{i\in\bz_N}V_iV_i^*=I.
\label{eq7.10.1}
\end{equation}
We say that the operators $(V_i)_{i\in\bz_N}$ are {\it block permutative} if there exists a decomposition of $M$ into orthogonal subspaces $M=\oplus_{j\in J}M_{j}$ such that for every $j\in J$ and every $i\in\bz_N$, the operator $V_i^*$ restricted to $M_j$ is either unitary onto some $M_{j'}$ with $j'\in J$ or constant 0. 

We say that the operators $(V_i)_{i\in\bz_N}$ are {\it permutative} if there exists an orthonormal basis $(e_j)_{j\in J}$ for $M$ such that for all $j\in J$ and all $i\in \bz_N$, either $V_i^*e_j=e_{j'}$ for some $j'\in J$, or $V_i^*e_j=0$ 
\end{definition}

\begin{remark}\label{rem7.11}
If $V_i^*$ is unitary from $M_j$ to $M_{j'}$ as above then, because of the relation \eqref{eq7.10.1}, for all $i'\neq i$, $V_{i'}^*$ is zero on $M_j$. Similarly, if $V_i^*e_j=e_{j'}$, then $V_{i'}^*e_j=0$ for all $i'\neq i$. 

\end{remark}

\begin{theorem}\label{th7.12}
Let $(S_i)_{i\in\bz_N}$ be a representation of $\O_N$ on a Hilbert space $\H$ and let $M$ be minimal finite dimensional cyclic $(S_i^*)$-invariant subspace. Let $V_i^*=S_i^*P_M$. Then
\begin{enumerate}
	\item The representation is purely atomic if and only if $(V_i)_{i\in\bz_N}$ is block permutative.
	\item The representation is permutative if and only if $(V_i)_{\in\bz_N}$ is permutative.
\end{enumerate}

\end{theorem}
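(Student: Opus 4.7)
The plan is to use Corollary \ref{cor4.19} to pin down $M$ explicitly in the forward directions, and to run a finite-dimensional Wold-type argument in the backward directions.

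\emph{Forward directions.} If the representation is purely atomic, Corollary \ref{cor4.19} gives $M = \bigoplus\{P(\omega)\H : \omega \text{ is a cycle in the support}\}$; setting $M_\omega := P(\omega)\H$, Lemma \ref{lem2.7.1} shows that $S_i^*|_{M_\omega}$ is unitary onto $M_{\sigma(\omega)}$ when $\omega_1 = i$ and zero otherwise, and since $V_i^* = S_i^*$ on $M$ this realizes the block permutative structure required in (i). For (ii)$\Rightarrow$, a permutative representation with ONB $\{e_i\}_{i\in I}$ and coding map $E$ has, by Theorem \ref{th1.2}, the set $\{e_i : E(i) \text{ is a cycle}\}$ as an ONB of $M$, and the permutative relations \eqref{eq1.2.3} combined with the intertwining \eqref{eq1.2.5} give that $V_l^* e_i = S_l^* e_i$ is either another such basis vector or zero.

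\emph{Backward direction of (i).} Given a block permutative decomposition $M = \bigoplus_j M_j$, Remark \ref{rem7.11} together with the relation $\sum_i V_iV_i^* = I_M$ from Theorem \ref{th5.1} singles out, for each $j \in J$, a unique $i(j) \in \bz_N$ for which $V_{i(j)}^*|_{M_j}$ is unitary onto some $M_{\sigma(j)}$, defining $\sigma : J \to J$. Since $J$ is finite, each orbit of $\sigma$ eventually enters a cycle: pick $n(j), p(j)$ with $\sigma^{n(j)+p(j)}(j) = \sigma^{n(j)}(j)$ and form the words $L(j) := i(j)\,i(\sigma(j)) \cdots i(\sigma^{n(j)-1}(j))$ and $K(j) := i(\sigma^{n(j)}(j)) \cdots i(\sigma^{n(j)+p(j)-1}(j))$. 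The main obstacle is to show $M_j \subset P(L(j)\,\underline{K(j)})\H$. For $v \in M_j$, iterating unitary blocks gives $\|V_{L(j)K(j)^m}^* v\| = \|v\|$ for every $m \geq 0$; since $M$ is $(S_i^*)$-invariant we have $V_I^* v = S_I^* v$ on $M$ for every finite word $I$, so $v \in \mathrm{range}(S_{L(j)K(j)^m}) = P(\C(L(j)K(j)^m))\H$ for every $m$, and the intersection over $m$ equals $P(L(j)\,\underline{K(j)})\H$ by Remark \ref{rem3.5}. Cyclicity of $M$ via Proposition \ref{pr2.4} and Lemma \ref{lem2.7.1} then show that $\H$ decomposes as the closed orthogonal sum of atoms of the form $P(I\cdot L(j)\,\underline{K(j)})\H$, establishing pure atomicity.

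\emph{Backward direction of (ii).} In the permutative case each $M_j = \mathbb{C}e_j$ is one-dimensional, so (i) yields that the representation is purely atomic. Minimality of $M$ combined with Corollary \ref{cor4.19} forces each $\omega_j := L(j)\,\underline{K(j)}$ to be a cycle (otherwise $M_j \subset M \cap P(\omega_j)\H = 0$, since $M$ is then the sum of cycle atoms and $P(\omega_j)\H$ is orthogonal to each of them); in particular $L(j)$ is empty and $\omega_j = \underline{K(j)}$. For each cycle $\omega$ in the support, set $J_\omega := \{j \in J : \omega_j = \omega\}$; because $P(\omega)\H \subset M$ is orthogonal to $e_{j'}$ whenever $\omega_{j'} \neq \omega$, the set $\{e_j : j \in J_\omega\}$ is an ONB of $P(\omega)\H$. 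Writing $\omega = \underline{\omega_1 \cdots \omega_p}$ and iterating the permutation action of $V_{\omega_1}^*, \ldots, V_{\omega_p}^*$ shows that $S_{\omega_1 \cdots \omega_p}^*$ (and hence its inverse $S_{\omega_1 \cdots \omega_p}$ on $P(\omega)\H$) permutes this ONB, so Theorem \ref{th1.5} concludes that the full representation is permutative.
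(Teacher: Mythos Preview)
Your proof is correct and follows essentially the same route as the paper's: both use Corollary \ref{cor4.19} and Lemma \ref{lem2.7.1} for the forward directions, build the infinite word $\omega_j$ from the block structure and show $M_j\subset P(\omega_j)\H$ for the converse of (i), and invoke Theorem \ref{th1.5} for the converse of (ii). Your version is in fact more explicit than the paper's in two places: you spell out the eventually periodic structure $\omega_j=L(j)\,\underline{K(j)}$, and in (ii) you use minimality via Corollary \ref{cor4.19} to force each $\omega_j$ to be an actual cycle before appealing to Theorem \ref{th1.5}---the paper compresses this into a single sentence. One cosmetic point: the phrase ``in particular $L(j)$ is empty'' is not literally forced (one could have chosen $n(j)>0$ even when $j$ already sits on a $\sigma$-cycle, giving $L(j)=K(j)$), but the conclusion you need and use, namely that $\omega_j$ is a cycle, is correct regardless.
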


\begin{proof}
If the representation is purely atomic then, by Corollary \ref{cor4.19}, 
$$M=\oplus\{P(\omega)\H : \omega\mbox{ cycle in the support }\}.$$
Let $M_\omega=P(\omega)\H$. Then for all $\omega$ cycle in the support and all $i\in\bz_N$, by Lemma \ref{lem2.7.1}, we have that, on $M_\omega$, $V_i^*=S_i^*P_M$ is either unitary onto $M_{\sigma(\omega)}$ or zero.

For the converse, for each $j\in J$ there exists a unique $\omega_1$ such that $V_{\omega_1}^*$, so $S_{\omega_1}^*$, is unitary from $M_j$ to $M_{j'}$. Inductively, we can construct $\omega=\omega_1\omega_2\dots$ such that  $S_{\omega|k}^*$ is unitary on $M_j$. We identify $j$ with $\omega$. Note that if $v\in M_\omega$ then $S_{\omega|k}S_{\omega|k}^*v=v$ so $v\in P(\omega|k)\H$ for all $k\geq 1$, which means that $v\in P(\omega)\H$. Thus, $M_\omega\subset P(\omega)\H$. 

Also, if $v\in M_\omega$ and $I$ is some finite word, then 
$$S_{I\omega|k}S_{I\omega|k}^*(S_Iv)=S_IS_{\omega|k}S_{\omega|k}^*v=S_Iv,$$
therefore $S_Iv\in P(I\omega|k)\H$ for all $k\geq 1$. This means that $S_Iv\in P(I\omega)\H$. 

Since $M$ is cyclic, we have that $$\Span\{S_Iv : v\in \oplus_\omega M_\omega, I\mbox{ finite word}\}=\H.$$
So $\oplus P(I\omega)=I$ and the representation is purely atomic.

For (ii), if the representation is permutative, then $M$ contains all cyclic atoms, by Corollary \ref{cor4.19}, and these atoms contain all the vectors in the permutative orthonormal basis that have a cyclic encoding, by Theorem \ref{th1.2}. These vectors make $(V_i)_{i\in \bz_N}$ permutative. 

For the converse, if $(V_i)_{i\in\bz_N}$ is permutative then it is also block permutative with blocks corresponding the one dimesional spans of each vector $e_j$. 

Thus, by (i), the representation is purely atomic and supported on the orbits of the cyclic atoms that are contained in $M$ and the orthonormal basis for $M$, $\{e_j: j\in J\}$ splits into several orthonormal bases, one for each cyclic atom contained in $M$, satisfying the condition (ii) of Theorem \ref{th1.5}. Therefore the representation is permutative. 

\end{proof}

\subsection{Representations associated to Hadamard triples} The next example involves representations of $\O_N$ associated to orthonormal Fourier bases on fractal measures. They were studied in \cite{DuJo12}. 

\begin{definition}\label{def8.1}
Let $R\geq2$ be an integer and let $B$ and $L$ be two finite subsets of $\bz$ with $0\in B,L$ and having the same cardinality $|B|=|L|=:N$. We say that $(R,B,L)$ forms a Hadamard triple if the matrix
\begin{equation}
\frac{1}{\sqrt N}\left(e^{2\pi i\frac1Rb\cdot l}\right)_{b\in B,l\in L}
\label{eq8.1.1}
\end{equation}
is unitary. 
\end{definition}

\begin{definition}\label{def8.2}
Let $(R,B,L)$ be a Hadamard triple. Define the affine maps
$$\tau_b(x)=R^{-1}(x+b),\quad(x\in\br, b\in B).$$

By \cite{Hut81}, there exists a unique compact set $X_B$ called {\it the attractor} of the IFS $(\tau_b)_{b\in B}$ such that
\begin{equation}
X_B=\bigcup_{b\in B}\tau_b(X_B).
\label{eq1.2}
\end{equation}
In our case, it can be written explicitly
\begin{equation}
X_B=\left\{\sum_{k=1}^\infty R^{-k}b_k : b_k\in B\mbox{ for all }k\geq1\right\}.
\label{eq1.3}
\end{equation}

There exists a unique Borel probability measure $\mu_B$ such that 
\begin{equation}
\mu_B(E)=\frac{1}{N}\sum_{b\in B}\mu_B(\tau_b^{-1}(E))\mbox{ for all Borel subsets }E\mbox{ of }\br^d.
\label{eq1.4}
\end{equation}
Equivalently 
\begin{equation}
\int f\,d\mu_B=\frac1N\sum_{b \in B}\int f\circ \tau_b\,d\mu_B\mbox{ for all bounded Borel function }f\mbox{ on }\br^d.
\label{eq1.5}
\end{equation}
The measure $\mu_B$ is called the {\it invariant measure} of the IFS $(\tau_b)_{b\in B}$. It is supported on $X_B$.

The measure $\mu_B$ {\it has no overlap}, (see \cite[Theorem 1.10]{DHS12}), i.e., 
\begin{equation}
\mu_B(\tau_b(X_B)\cap \tau_{b'}(X_B))=0,\mbox{ for all }b\neq b'\in B.
\label{eq1.6}
\end{equation}

Since the measure $\mu_B$ has no overlap, then one can define the map $\R:X_B\rightarrow X_B$
\begin{equation}
\R(x)=Rx-b,\mbox{ if }x\in\tau_b(X_B).
\label{eq1.7}
\end{equation}
The map $\R$ is well defined $\mu_B$-a.e. on $X_B$.

For $\lambda\in \br$, denote by $e_\lambda(x)=e^{2\pi i\lambda x}$, $x\in\br$.

We define the function \begin{equation}
m_B(x)=\frac{1}{\sqrt{N}}\sum_{b\in B}e^{2\pi i bx},\quad(x\in\br).
\label{eq1.8}
\end{equation}

A set $\{x_0,\dots,x_{p-1}\}$ of points in $\br$ is called an extreme cycle if there exist $l_0,\dots,l_{p-1}\in L$ such that 
$$\frac1R(x_0+l_0)=x_1,\dots,\frac1R(x_{p-2}+l_{p-2})=x_{p-1},\frac1R(x_{p-1}+l_{p-1})=x_0,$$
and 
$$|m_B(x_i)|=\sqrt{N},\quad(i=0,\dots,p-1).$$

For an extreme cycle $C$, let $\Lambda(C)$ is the smallest set such that $\Lambda(C)$ contains $-C$ and such that $R\Lambda+L\subset \Lambda$. Let $\Lambda$ be the union of all $\Lambda(C)$ with $C$ extreme cycle. 

Identify $L$ with $\bz_N$ and define the map $E:\Lambda\rightarrow\K_N$ by 
$E(\lambda)=l_1l_2\dots$, where $\lambda=l_1+R\lambda_1$, $l_1\in L$, $\lambda_1\in\Lambda$, $\lambda_1=l_2+R\lambda_2,\dots$.

For any point $x_0$ in an extreme cycle as before, $E(-x_0)=\underline{l_{p-1}\dots l_0}$.

Define the operators $S_l$, $l\in L$ on $L^2(\mu_B)$ by 
\begin{equation}
S_lf=e_lf\circ\R,\quad(l\in L).
\label{eq8.2.2}
\end{equation}

\end{definition}

\begin{theorem}\label{opr8.3}
The operators $(S_l)_{l\in L}$ form a permutative representation of $\O_N$. A permutative orthonormal basis is $\{e_\lambda :\lambda\in \Lambda\}$. The maps $\sigma_l$ on $\Lambda$ are given by $\sigma_l(\lambda)=l+R\lambda$, $l\in L$, $\lambda\in\Lambda$. The encoding map is $E$ and it is injective. The decomposition of the representation into inequivalent irreducible representations is given by the spaces $\H(C)=\{e_\lambda :\lambda\in\Lambda(C)\}$ for all the extreme cycles $C$. The associated projected valued measures $P$ are purely atomic and 
$P(E(\lambda))$ is the projection onto the function $e_\lambda$. If $x_0$ is a point in an extreme cycle $C$, then $E(-x_0)$ is cyclic and $E(\Lambda(C))=\Orbit(E(-x_0))$. 

Every finite dimensional subspace $M$ which is invariant under all $S_l^*$, $l\in L$ must contain functions $e_{-c}$ for all $c$ points in one of the extreme cycles $C$. If in addition $M$ is cyclic then $M$ contains all such functions for all extreme cycles $C$. 
\end{theorem}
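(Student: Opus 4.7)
The plan is to verify that $(S_l)_{l\in L}$ is a permutative representation of $\O_N$ with the claimed basis and encoding, and then to read off the remaining statements from the general machinery of Sections~3 and~4. First, I would establish the Cuntz relations. Using the invariance of $\mu_B$ from \eqref{eq1.5}, one computes
\begin{equation*}
S_l^* g(x)=\frac{1}{N}\sum_{b\in B}e^{-2\pi i l(x+b)/R}\,g(\tau_b(x)),
\end{equation*}
and then $S_l^* S_{l'}=\delta_{ll'} I$ reduces to orthogonality of rows of the Hadamard matrix~\eqref{eq8.1.1}, while $\sum_l S_l S_l^*=I$ uses the no-overlap property~\eqref{eq1.6} and completeness of $L$ modulo $R$. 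Next I would check the permutative action $S_l e_\lambda=e_{\sigma_l(\lambda)}$ with $\sigma_l(\lambda)=l+R\lambda$: on a cell $\tau_b(X_B)$ one has $(S_l e_\lambda)(\tau_b(y))=e^{2\pi i l(y+b)/R}e^{2\pi i\lambda y}$, while $e_{l+R\lambda}(\tau_b(y))=e^{2\pi i l(y+b)/R}e^{2\pi i\lambda y}e^{2\pi i\lambda b}$, so the two agree as $L^2(\mu_B)$-elements provided $\lambda b\in\bz$ for every $b\in B$. The extreme cycle condition $|m_B(x_i)|=\sqrt N$ forces all unit-modulus summands in $m_B(x_i)$ to align, and since $0\in B$ this gives $b x_i\in\bz$ for every cycle point $x_i$ and every $b\in B$; the integrality $\lambda b\in\bz$ is preserved by $\lambda\mapsto l+R\lambda$, hence holds throughout $\Lambda$. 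The fact that $\{e_\lambda:\lambda\in\Lambda\}$ is already an orthonormal basis of $L^2(\mu_B)$ is the spectral theorem for Hadamard fractal measures, imported from~\cite{DuJo12}.

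With the permutative structure in hand, injectivity of $E$ follows from uniqueness of the base-$R$ expansion of $\lambda$ with digits in $L$ (the Hadamard condition forces $L$ to be a complete system of residues modulo $R$, so the cosets $l+R\Lambda$ are disjoint); iteration terminates on some $-x_0$ with $x_0$ in an extreme cycle $C$, and the recursion $-x_j=l_j+R(-x_{j+1\bmod p})$ gives $E(-x_0)=\underline{l_0\dots l_{p-1}}$. Theorem~\ref{th1.2} then identifies $P$ as purely atomic, supported on $E(\Lambda)$, with $P(E(\lambda))$ the rank-one projection onto $e_\lambda$. Distinct extreme cycles $C$ yield disjoint $\Lambda(C)$ (their $E$-images have distinct periodic tails), so each $\H(C)=\Span\{e_\lambda:\lambda\in\Lambda(C)\}$ is both $\sigma$- and $\sigma^{-1}$-invariant at the level of $P$, hence invariant for the representation by Corollary~\ref{cor2.3}. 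On each $\H(C)$ the support of $P$ is a single orbit $\Orbit(E(-x_0))$ with one-dimensional atoms, so Corollary~\ref{cor2.13} gives irreducibility and Proposition~\ref{pr2.11}(i) gives pairwise inequivalence across distinct extreme cycles.

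For the last statement, let $M$ be a finite dimensional $(S_l^*)$-invariant subspace. Theorem~\ref{th3.11} produces a cycle $\omega$ in $\supp P$ and a nonzero vector in $P(\omega)\H\cap M$; since that atom is one-dimensional and equal to $\bc\cdot e_{-x_0}$ with $\omega=E(-x_0)$ for some $x_0$ in an extreme cycle $C$, the vector $e_{-x_0}$ itself lies in $M$. Applying the cyclic $S_{l_j}^*$'s in turn (each sends $e_{-x_j}$ to $e_{-x_{j+1\bmod p}}$ by the permutative relation) yields $e_{-c}\in M$ for every $c\in C$. If in addition $M$ is cyclic, Corollary~\ref{cor4.18} strengthens this to $P(\omega')\H\subset M$ for \emph{every} cycle $\omega'$ in $\supp P$; since every cyclic element of $\Lambda$ must lie in some $-C$ (the only loops in the tree structure of $\Lambda$ are the seeds), these cycles are exactly the $E(-c)$ with $c$ in some extreme cycle, and hence $e_{-c}\in M$ for every $c$ in every extreme cycle.

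The main obstacle I anticipate is the verification of the identity $S_l e_\lambda=e_{\sigma_l(\lambda)}$ as an honest equality in $L^2(\mu_B)$: because $\R$ is only piecewise defined on the cells $\tau_b(X_B)$, the computation naturally acquires local phases $e^{-2\pi i\lambda b}$, and the fact that these phases are all $1$ on $\Lambda$ is the arithmetic consequence of the extreme-cycle condition combined with $B\subset\bz$. It is precisely this rigidity that makes the representation literally permutative rather than merely projective, and is the nontrivial input from the theory of Hadamard triples into the otherwise purely $\O_N$-theoretic machinery of the preceding sections.
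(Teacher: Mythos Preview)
Your approach is essentially the paper's: verify the permutative action directly, import the orthonormal basis from \cite{DuJo12}, then invoke the general machinery of Sections~3 and~4. Your phase computation for $S_le_\lambda=e_{l+R\lambda}$ is a welcome addition, since the paper simply asserts this identity; your observation that the extreme-cycle condition $|m_B(x_i)|=\sqrt N$ forces $bx_i\in\bz$ for all $b\in B$ (equality in the triangle inequality, with $0\in B$), and that this integrality propagates under $\lambda\mapsto l+R\lambda$, is exactly the point that makes the representation literally permutative rather than only projectively so.

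There is one genuine slip in your injectivity argument for $E$. You claim the Hadamard condition forces $L$ to be a \emph{complete} system of residues modulo $R$. This is false in general: Hadamard triples allow $N<R$ (e.g.\ $R=4$, $B=\{0,2\}$, $L=\{0,1\}$). What the Hadamard condition does give is that the elements of $L$ are pairwise \emph{incongruent} mod $R$, but even that is not directly what you need, since points of $\Lambda$ are not integers in general (an extreme cycle point has the form $x_0=\text{integer}/(R^p-1)$). The disjointness of the sets $l+R\Lambda$ is in any case automatic from Proposition~\ref{pr1.2} once the Cuntz relations and the permutative action are established, but disjointness alone yields only that the digits of $E$ are well defined, not that $E$ is injective. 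The paper's route is cleaner here: the map $\sigma$ on $\Lambda$ acts by $\lambda\mapsto(\lambda-l_1)/R$, which is a strict contraction with ratio $1/R$; if $E(\lambda)=E(\lambda')$ then $|\sigma^n(\lambda)-\sigma^n(\lambda')|=R^{-n}|\lambda-\lambda'|$, and since both orbits eventually land in the finite set $-C$ of cycle points, the distance must eventually be zero, forcing $\lambda=\lambda'$.

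Your use of Theorem~\ref{th3.11} and Corollary~\ref{cor4.18} for the final statement is correct and interchangeable with the paper's appeal to Corollary~\ref{cor1.4} and Theorem~\ref{th2.6} (the former pair works for purely atomic representations, the latter for permutative ones with injective encoding; both apply here once you have fixed the injectivity argument).
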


\begin{proof}
Many details are contained in \cite{DuJo12}, but we will sketch some of them and also use our results here. We have
$$S_le_\lambda=e_{l+R\lambda},\quad(\lambda\in\Lambda, l\in L)$$
and this proves that the maps $\sigma_l$ and the encoding $E$ are as given. The fact that $E$ is injective follows from the fact that the inverses of the maps $\sigma_l$ are strictly contracting. The completeness of the basis $\{e_\lambda :\lambda\in\Lambda\}$ is proved in \cite{DuJo12}, see also \cite{DJ06,DPS13}. The decomposition into irreducibles is presented in \cite{DuJo12}, but it also follows easily from our results: since the representation is permutative, it follows that it is purely atomic and $P(E(\lambda))$ is the projection onto $e_\lambda$, by Theorem \ref{th1.2}. The orbit of $E(-c)$, with $c$ a point in an extreme cycle, is $E(\Lambda(C))$. By Proposition \ref{pr3.6}, each such orbit gives a subrepresentation and, by Proposition \ref{pr2.11}, these subrepresentations are irreducible and inequivalent. The statements about $S_l^*$-invariant spaces follow from Theorem \ref{th2.6} and Corollary \ref{cor1.4}.

\end{proof}

\begin{example}\label{ex8.4}
The classical Fourier bases fit into this context. Let $R=2$, $B=L=\{0,1\}$. Then the measure $\mu_B$ is the Lebesgue measure on $[0,1]$. There are two extreme cycles: $\{0\}$ with $E(0)=\underline 1$ and $\{1\}$ with $E(-1)=\underline 1$. The sets $\Lambda(0)=\{n\in\bz : n\geq 0\}$ and $\Lambda(1)=\{n\in\bz : n<0\}$. The encoding map $E$ associates to an integer its the base 2 expansion; for non-negative numbers the expansion ends in $\underline 0$ and, for negative numbers, the expansion ends in $\underline1$. 
The isometries are
$$S_0f(x)=f(2x\mod1),\quad S_1(x)=e^{2\pi ix}f(2x\mod1),\quad(x\in\br, f\in L^2[0,1]).$$
This representation of $\O_2$ decomposes into two inequivalent irreducible representations on $\H(0)=H^2$, the Hardy space and on $\H(1)=\Span\{e_n : n<0\}$.

\end{example}

\subsection{Representations associated to Walsh bases}
Recall some facts from \cite{DPS13}. Let $N\geq 2$ and let $A$ be an $N\times N$ unitary matrix with constant first row $\frac{1}{\sqrt N}$. Let $\R(x)=Nx\mod 1$ on $[0,1]$. Define the QMF system
$$m_i(x)=\sqrt{N}\sum_{j=0}^{N-1}a_{ij}\chi_{[j/N,(j+1)/N)}(x).$$
Define the operators on $L^2[0,1]$
$$S_if=m_i\cdot(f\circ \R),\quad(f\in L^2[0,1], i\in\bz_N).$$

\begin{theorem}\label{th9.1}
The operators $(S_i)_{i\in\bz_N}$ form a permutative representation of $\O_N$ with permutative basis
\begin{equation}
\{S_w1: w\mbox{ finite word, either empty or ending in 1}\}.
\label{eq9.1.1}
\end{equation}

The encoding map $E$ associated to a word $w$ as in \eqref{eq9.1.1} is $E(w)=w\underline0$ and it is injective. The maps $\sigma_l$ are $\sigma_0(\ty)=\ty$, $\sigma_l(\ty)=l$ for $l\neq 0$ and $\sigma_l(w)=lw$ for $w\neq \ty$. 
The representation is irreducible, purely atomic, supported on the set of finite words ending in $\underline0$. Every finite dimensional subspace $M$ which is invariant for all $S_l^*$, $l\in\bz_N$ must contain $1$. 

\end{theorem}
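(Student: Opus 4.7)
The plan is to identify $\mathrm{Span}\{1\}$ as a one-dimensional $(S_i^*)$-invariant cyclic subspace and let the machinery of Sections~3 and~4 do the rest. First I would compute at the base: since the first row of $A$ is $\tfrac1{\sqrt N}(1,\dots,1)$, $m_0\equiv 1$, so $S_0 1 = 1$ and hence $S_0^* 1 = S_0^*S_0 1 = 1$. For $i\neq 0$, unitarity of $A$ forces the $i$-th row to be orthogonal to the first, which gives $\sum_j a_{ij}=0$; a direct computation of $S_i^* 1$ using the splitting of $[0,1]$ into the intervals $[k/N,(k+1)/N)$ then yields $S_i^* 1=\frac1{\sqrt N}\sum_k \overline{a_{ik}}=0$. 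Hence $\mathrm{Span}\{1\}$ is a one-dimensional $(S_i^*)$-invariant subspace.

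Next I would show this subspace is cyclic by identifying the levels of $\alpha^k(P_{\mathrm{Span}\{1\}})$. An induction on $|w|$ shows that for a word $w=w_0\dots w_{k-1}$,
\[
(S_w 1)(x) \;=\; \prod_{l=0}^{k-1} m_{w_l}(N^l x\bmod 1),
\]
which on a level-$k$ dyadic interval $I_{j_0\dots j_{k-1}}=[\sum_l j_l N^{-l-1},\sum_l j_l N^{-l-1}+N^{-k})$ takes the constant value $N^{k/2}\prod_l a_{w_l,j_l}$. Thus the change-of-basis from the indicator functions $N^{k/2}\chi_{I_{j_0\dots j_{k-1}}}$ to $\{S_w 1:|w|=k\}$ is the tensor power $A^{\otimes k}$, which is unitary. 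Therefore $\alpha^k(P_{\mathrm{Span}\{1\}})\H$ equals the space of step functions constant on level-$k$ dyadic intervals, and $\bigvee_k \alpha^k(P_{\mathrm{Span}\{1\}})=I$ by density of simple dyadic functions in $L^2[0,1]$. This is the main technical step; everything else assembles from results earlier in the paper.

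With cyclicity established, I would define $\mathcal B=\{S_w 1:w\text{ empty or not ending in }0\}$. Orthonormality follows from the Cuntz relations (different paths give orthogonal ranges, and the relation $S_0 1=1$ lets us canonicalize away trailing zeros), and $\mathcal B$ is preserved by each $S_i$ since $S_i S_w 1 = S_{iw} 1$ and the concatenated word is again empty/ending nonzero (or, when $i=0$ and $w=\emptyset$, equals $1$ itself). Density follows from the previous paragraph. So $\mathcal B$ is a permutative basis. The branching maps are read off directly: $\sigma_0(\emptyset)=\emptyset$ because $S_0 1 = 1$, $\sigma_l(\emptyset)=l$ for $l\neq 0$, and $\sigma_l(w)=lw$ for $w\neq\emptyset$. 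Iterating the defining relation $\sigma^k(w)\in\sigma_{w_k}(I)$ yields the coding $E(w)=w\underline 0$, which is manifestly injective on $\mathcal B$.

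Finally, by Theorem~\ref{th1.2} the representation is purely atomic with support $E(\mathcal B)=\{w\underline 0:w\text{ finite word}\}$, i.e.\ the set of infinite words eventually equal to $\underline 0$. The only cycle in this support is $\underline 0$ itself, and $P(\underline 0)$ is the projection onto $\mathrm{Span}\{1\}$, so $\dim P(\underline 0)=1$. Every element of the support lies in $\mathrm{Orbit}(\underline 0)$ since $\sigma^{-1}$ freely prepends finite words, so by Corollary~\ref{cor2.13} the representation is irreducible. For the last assertion, let $M$ be any finite-dimensional $(S_i^*)$-invariant subspace. Theorem~\ref{th3.11} provides a cycle $\omega$ in the support with $M\cap P(\omega)\H\neq\{0\}$; the only cycle is $\underline 0$, and since $P(\underline 0)\H=\mathrm{Span}\{1\}$ is one-dimensional, the nonzero intersection forces $1\in M$.
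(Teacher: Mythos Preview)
Your proof is correct and follows essentially the paper's outline: establish the permutative ONB, read off $\sigma_l$ and $E$, invoke Theorem~\ref{th1.2} for pure atomicity, the single-orbit/one-dimensional-atom criterion for irreducibility, and an invariant-subspace result for the last claim. The differences are cosmetic: you supply the ONB argument directly via the $A^{\otimes k}$ change of basis where the paper defers to \cite{DPS13}; for irreducibility you cite Corollary~\ref{cor2.13} where the paper cites Proposition~\ref{pr2.11}; and for ``$1\in M$'' you use Theorem~\ref{th3.11} while the paper uses Corollary~\ref{cor1.4} (both work since $\underline 0$ is the only cycle in the support and its atom is one-dimensional). Note also that your index set ``empty or not ending in $0$'' is the correct description for general $N$; the paper's phrase ``ending in $1$'' is only literally right when $N=2$.
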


  \begin{proof}
  The fact that these operators form a representation of $\O_N$ and that \eqref{eq9.1.1} gives an orthonormal basis, is proved in \cite{DPS13}. The formulas for $\sigma_l$ and $E$ are obvious. Since $S_01=1$ all encodings end in $\underline0$. The projection valued measure $P$ is supported on the words that end in $\underline 0$ and this is the orbit of $\underline 0$, $P(w\underline 0)$ is the projection onto $S_w1$, by Theorem \ref{th1.2}, so it is one-dimensional. Therefore the representation is irreducible, by Proposition \ref{pr2.11}.

  The statement about $S_l^*$-invariant spaces follows from Corollary \ref{cor1.4}.
  \end{proof}
  
  \begin{remark}\label{rem9.2}
  If $N=2$ and
  $$A=\frac{1}{\sqrt2}\begin{pmatrix}
	1&1\\1&-1
\end{pmatrix}
  $$
 then one gets exactly the classical Walsh basis on $L^2[0,1]$. 
 
 For $N=2$ and any unitary matrix $A$ as above, the corresponding representation of $\O_2$ is permutative with 1-dimensional atoms and supported on the words that end in $\underline 0$. The same is true for the subrepresentation defined in Example \ref{ex8.4} defined on the Hardy space $H^2$. Therefore, by Corollary \ref{cor2.14}, all these representations are equivalent. 
  \end{remark}

  \begin{acknowledgements}
This work was partially supported by a grant from the Simons Foundation (\#228539 to Dorin Dutkay).
\end{acknowledgements}

\bibliographystyle{alpha}	
\bibliography{eframes}

\end{document}